\documentclass[10pt,reqno]{amsart}
\usepackage{amsmath,amssymb,amsthm,upref,mathrsfs,enumerate}

\numberwithin{equation}{section}
\allowdisplaybreaks

\newtheorem{theorem}{Theorem}[section]
\newtheorem{proposition}[theorem]{Proposition}
\newtheorem{lemma}[theorem]{Lemma}
\newtheorem{corollary}[theorem]{Corollary}

\theoremstyle{definition}
\newtheorem{definition}[theorem]{Definition}
\newtheorem{remark}[theorem]{Remark}

\newcommand{\T}{\mathcal T}
\newcommand{\Z}{{\mathbb Z}}
\newcommand{\R}{\mathbb R}
\newcommand{\N}{\mathbb N}
\newcommand{\D}{\mathscr D}
\newcommand{\Sscr}{\mathscr S}
\newcommand{\Fscr}{\mathscr F}
\newcommand{\M}{\mathcal M}
\newcommand{\A}{\mathcal A}
\newcommand{\one}{\mathbf 1}
\newcommand{\avg}[1]{\left\langle #1\right\rangle}
\newcommand{\Mloc}{\mathcal M^{\mathrm{loc}}}
\newcommand{\norm}[2][]{\left\lVert #2\right\rVert_{#1}}

\usepackage{color}

\title[Vector-valued maximal inequalities on ball Banach function spaces]
{Vector-Valued Hardy--Littlewood Maximal Inequalities on Ball Banach Function Spaces}

\subjclass[2020]{Primary 42B25, 46E30; Secondary 42B20, 42B35}
\keywords{Hardy--Littlewood maximal operator, Fefferman--Stein inequality, ball Banach function space, ball quasi-Banach function space, K\"othe associate space, sparse operator, power rescaling, powered maximal operator, extrapolation}

\begin{document}

\author{Yoshihiro Sawano}
\address{Department of Mathematics,
Graduate School of Science and Engineering,
Chuo University,
1-13-27 Kasuga,
Bunkyo-ku,
Tokyo 112-8551, Japan}
\email{yoshihiro-sawano@celery.ocn.ne.jp}

\author{Mitsuo Izuki}
\address{Faculty of Liberal Arts and Sciences,
Tokyo City University,
1-28-1 Tamazutsumi,
Setagaya-ku,
Tokyo 158-8557, Japan}
\email{izuki@tcu.ac.jp}

\author{Takahiro Noi}
\address{Department of Mathematical and Data Science,
Otemon Gakuin University,
2-1-15 Nishiai,
Ibaraki,
Osaka 567-8502, Japan}
\email{taka.noi.hiro@gmail.com}

\begin{abstract}
The goal of this paper is to characterize
the boundedness property of the uncentered Hardy--Littlewood maximal operator
on ball Banach function spaces and their K\"{o}the associate spaces 
in terms of the vector-valued maximal inequality.
The main result of this paper supplements the recent results
by Nieraeth.
\end{abstract}
\maketitle

\section{Introduction}\label{sec 20260701-1}

The goal of this paper is to investigate the boundedness property
of operators acting on Banach lattices.
Let $\M$ denote the uncentered Hardy--Littlewood maximal operator over
axis-parallel cubes in $\R^n$.
See \eqref{eq 20260702-4} for the precise definition. For $1<q<\infty$, we consider whether
there is a constant $C_{q,X}$, independent of the positive integer $N$,
such that
\begin{equation}\label{eq 20260701-1}
 \left\|\left(\sum_{j=1}^N(\M f_j)^q\right)^{\frac1q}\right\|_X
 \leq C_{q,X}
 \left\|\left(\sum_{j=1}^N|f_j|^q\right)^{\frac1q}\right\|_X
\end{equation}
for every $N\in\N$ and every finite family $\{f_j\}_{j=1}^N$ of
measurable functions for which the quantity on the right is finite.
Here we use the following notation:
Throughout the paper, for a measurable function $f$ we allow
$\int_Q |f(x)|\,dx$ to take the value $+\infty$, and we put
\begin{equation}\label{eq 20260702-4}
 \M f(x)=\sup\limits_{Q}\one_Q(x)\avg{|f|}_Q,
 \qquad
 \avg{f}_Q=\frac1{|Q|}\int_Q f,
\end{equation}
where the supremum is over all axis-parallel cubes. 
Here it will be understood that $0\cdot\infty =0$.
Thus $\M f$ is
initially an extended nonnegative measurable function. If $f$ is locally
integrable, all the averages in \eqref{eq 20260702-4} are finite.

 The
constant in \eqref{eq 20260701-1} is required to be independent of $N$.
This is the finite-sequence form of the classical vector-valued maximal
inequality of Fefferman and Stein \cite{FeffermanStein1971}. We also use its
weighted form due to Andersen and John \cite{AndersenJohn1981}.
Throughout the paper, a constant in a vector-valued estimate is only
required to be finite and independent of $N$. 

Let $X$ be a ball Banach function space 
(see Definition \ref{def 20260702-1})
and let $X'$ be its K\"othe
associate space. Consider the two boundedness assumptions
\begin{equation}\label{eq 20260701-2}
 \M:X\to X,
 \qquad
 \M:X'\to X'.
\end{equation}
It is known that these assumptions imply the vector-valued estimate
\eqref{eq 20260701-1} for every $q\in(1,\infty)$. This implication
follows from \cite[Theorem~1.1]{INS2026} by applying the extrapolation
theorem there to the weighted vector-valued maximal inequality of
Andersen and John \cite{AndersenJohn1981}. As explained in
\cite{INS2026}, that extrapolation theorem is a special case of
\cite[Theorem~4.7 and Remark~4.8]{Nieraeth2023}.

The relation between \eqref{eq 20260701-2} and sparse averaging operators
is also known. Lorist and Nieraeth \cite[Lemma~3.4]{LoristNieraethCompact2024}
and Nieraeth \cite[Theorem~1.2]{Nieraeth2026} proved that
\eqref{eq 20260701-2} is equivalent to uniform boundedness of sparse
averaging operators. We use this characterization in the proof of
Theorem~\ref{thm 20260701-1}, after deriving the required uniform sparse
bound from \eqref{eq 20260701-1}. Lerner obtained another criterion for
the boundedness of $\M$ on $X'$ in terms of local quantiles; see
\cite{Lerner2025}.

A related result concerns the linearized cube-average operators
\[
 T_Qf=\avg{f}_Q\one_Q.
\]
Here and below $\one_E$ denotes the indicator function of a set $E$.
Suppose that $X$ is $r_0$-convex for some $r_0>1$. Let $\mathcal Q$
denote the countable collection of cubes with rational corners.
Nieraeth considers the sequence-valued map
\[
 \{f_Q\}_{Q\in\mathcal Q}
 \longmapsto
 \{\avg{f_Q}_Q\one_Q\}_{Q\in\mathcal Q}.
\]
Here the norm of a sequence $\{g_Q\}_{Q\in\mathcal Q}$ in
$X[\ell^r(\mathcal Q)]$ is
\[
 \left\|\{g_Q\}_{Q\in\mathcal Q}\right\|_{X[\ell^r(\mathcal Q)]}
 =\left\|\left(\sum_{Q\in\mathcal Q}|g_Q|^r\right)^{1/r}\right\|_X.
\]
By \cite[Theorem~3.21]{Nieraeth2026}, \eqref{eq 20260701-2} is
equivalent to the boundedness of this map on $X[\ell^r(\mathcal Q)]$
for some $r\in(1,r_0]$. Restricting this bound to sequences supported
on a finite set $\mathcal F\subset\mathcal Q$ gives
\begin{equation*}\tag{$\mathrm{A}_r$}\label{eq 20260701-linearized}
 \left\|\left(\sum_{Q\in\mathcal F}|T_Qf_Q|^r\right)^{1/r}\right\|_X
 \leq C
 \left\|\left(\sum_{Q\in\mathcal F}|f_Q|^r\right)^{1/r}\right\|_X
\end{equation*}
for every finite collection $\mathcal F$ of cubes with rational corners
and every family $\{f_Q\}_{Q\in\mathcal F}\subset X$. The number $r$
and the constant $C$ are independent of $\mathcal F$ and of the family
$\{f_Q\}_{Q\in\mathcal F}$.

Let $\mathcal F=\{Q_1,\ldots,Q_N\}$ be an arbitrary finite
collection of cubes, and let $\{f_j\}_{j=1}^N\subset X$ be an arbitrary
family. For every $j$, choose cubes $Q_j^{(k)}$ with rational corners
whose lower-left corners and side lengths converge to those of $Q_j$.
The choices can be made so that the cubes $Q_1^{(k)},\ldots,Q_N^{(k)}$
are pairwise distinct for each $k$ and all the cubes $Q_j$ and
$Q_j^{(k)}$ are contained in one fixed bounded cube. Then
\[
 |Q_j^{(k)}\mathbin{\triangle}Q_j|\longrightarrow0,
 \qquad
 |Q_j^{(k)}|\longrightarrow |Q_j|,
 \qquad
 \one_{Q_j^{(k)}}\longrightarrow\one_{Q_j}
 \quad\text{almost everywhere}.
\]
Since $X\subset L^1_{\mathrm{loc}}(\R^n)$, each $f_j$ is integrable on
that fixed bounded cube. Hence the absolute continuity of the integral
gives
\[
 \left|\int_{Q_j^{(k)}}f_j-\int_{Q_j}f_j\right|
 \leq\int_{Q_j^{(k)}\mathbin{\triangle}Q_j}|f_j|
 \longrightarrow0.
\]
Together with $|Q_j^{(k)}|\to|Q_j|$, this yields
\[
 \avg{f_j}_{Q_j^{(k)}}\longrightarrow\avg{f_j}_{Q_j}
 \qquad(1\leq j\leq N).
\]
Consequently,
\[
 G_k:=\left(\sum_{j=1}^N
 |T_{Q_j^{(k)}}f_j|^r\right)^{1/r}
 \longrightarrow
 G:=\left(\sum_{j=1}^N|T_{Q_j}f_j|^r\right)^{1/r}
 \quad\text{almost everywhere}.
\]
Apply \eqref{eq 20260701-linearized} to
$\{Q_1^{(k)},\ldots,Q_N^{(k)}\}$ and the corresponding functions
$\{f_1,\ldots,f_N\}$. For $m\in\N$, set
\[
 H_m=\inf_{k\geq m}G_k.
\]
Then $0\leq H_m\uparrow G$ almost everywhere, and $H_m\leq G_k$ for
all $k\geq m$. By the lattice property of $X$,
\[
 \norm[X]{H_m}\leq\inf_{k\geq m}\norm[X]{G_k}.
\]
The Fatou property of $X$ therefore gives
\[
 \norm[X]{G}
 =\sup\limits_{m\in\N}\norm[X]{H_m}
 \leq\liminf_{k\to\infty}\norm[X]{G_k}
 \leq C\left\|\left(\sum_{j=1}^N|f_j|^r\right)^{1/r}\right\|_X.
\]
Thus \eqref{eq 20260701-linearized} also holds for arbitrary finite
collections of cubes. Nieraeth's proof of the equivalence in
\cite[Theorem~3.21]{Nieraeth2026} between \eqref{eq 20260701-2} and the
boundedness of the sequence-valued cube-average map on
$X[\ell^r(\mathcal Q)]$ uses a theorem of Rutsky \cite{Rutsky2016}.
For $r=2$, Nieraeth also proved that the implication
from \eqref{eq 20260701-linearized} to \eqref{eq 20260701-2} holds for
order-continuous Banach function spaces with the Fatou property, without a
convexity assumption; see \cite[Theorem~A and Remark~3.22]{Nieraeth2026}.
Since $|T_Qf_Q|\leq\M f_Q$ pointwise, \eqref{eq 20260701-1} with $q=2$
implies \eqref{eq 20260701-linearized} with $r=2$. Therefore Nieraeth's result
already gives the converse to \eqref{eq 20260701-1} in the
order-continuous case when $q=2$.

To the best of our knowledge, the present paper removes the convexity and
order-continuity assumptions
from this converse implication. More precisely, we prove that if
\eqref{eq 20260701-1} holds for some $q\in(1,\infty)$ with a constant
independent of $N$, then \eqref{eq 20260701-2} holds without assuming
$r_0$-convexity for any $r_0>1$ or order continuity. The
main new step is to obtain a uniform scalar sparse bound from
\eqref{eq 20260701-1}. First, the disjoint sets in a sparse family give a
bound for a sparse $\ell^q$ operator. Next, a binomial iteration changes
this $\ell^q$ bound into a bound for the ordinary sparse averaging
operator. The finite dyadic sparse criterion in
Proposition~\ref{prop 20260704-1} below then gives
\eqref{eq 20260701-2}, and in particular the boundedness of $\M$ on
$X'$.

\begin{theorem}[Vector-valued maximal characterization]\label{thm 20260701-1}
Let $X$ be a ball Banach function space on $\R^n$, and let $X'$ be
its K\"othe associate space.
\begin{enumerate}
 \item[$(i)$] The following statements are equivalent.
 \begin{enumerate}
 \item[$(a)$] $\M:X\to X$ and $\M:X'\to X'$ are bounded.
 \item[$(b)$] There exist $q\in(1,\infty)$ and a constant $C>0$ such that,
 for every $N\in\N$ and every finite family $\{f_j\}_{j=1}^N$ of
 measurable functions, the finite vector-valued inequality
 \begin{equation}\label{eq 20260701-3}
  \left\|\left(\sum_{j=1}^N(\M f_j)^q\right)^{\frac1q}\right\|_X
  \leq C
  \left\|\left(\sum_{j=1}^N|f_j|^q\right)^{\frac1q}\right\|_X
 \end{equation}
 holds
 whenever the quantity on the right is finite. The constant $C$ is
 independent of $N$ and of the family $\{f_j\}_{j=1}^N$.
 \item[$(c)$] For every $q\in(1,\infty)$, there is a constant $C>0$ such that
 \eqref{eq 20260701-3} holds for every $N\in\N$, with $C$ independent
 of $N$ and of the family $\{f_j\}_{j=1}^N$.
 \end{enumerate}
 \item[$(ii)$] The parameter $q\in(0,\infty)$ must satisfy $q>1$
if
there exists a finite constant $C$ 
 satisfying the inequality \eqref{eq 20260701-3} for every $N\in\N$
 and every finite family $\{f_j\}_{j=1}^N$ of
 measurable functions. 
 More precisely, for $0<q\leq1$ such an estimate fails on every
 ball quasi-Banach function space on $\R^n$.
 \item[$(iii)$] Fix $1<q<\infty$. Suppose that the inequality \eqref{eq 20260701-3}, with this value of $q$ and constant $C$, holds
 for every $N\in\N$. Then, for every sequence
 $\{f_j\}_{j=1}^{\infty}$ satisfying
 \[
 \left(\sum_{j=1}^{\infty}|f_j|^q\right)^{\frac1q}\in X,
 \]
 one has
 \[
 \left\|\left(\sum_{j=1}^{\infty}(\M f_j)^q\right)^{\frac1q}\right\|_X
 \leq C
 \left\|\left(\sum_{j=1}^{\infty}|f_j|^q\right)^{\frac1q}\right\|_X.
 \]
 Conversely, this estimate for all sequences implies
 \eqref{eq 20260701-3} by taking $f_j=0$ for $j>N$, with the same
 constant $C$.
 \item[$(iv)$] The following statements are equivalent. The constant $C$ may be
 chosen to be the same in $(a)$--$(c)$.
 \begin{enumerate}
 \item[$(a)$] There is a constant $C>0$ such that
 \[
  \norm[X]{\M f}\leq C\norm[X]{f}
  \qquad(f\in X).
 \]
 \item[$(b)$] There is a constant $C>0$ such that, for every $N\in\N$ and
 every finite family $\{f_j\}_{j=1}^N$ satisfying
 $\max\limits_{1\leq j\leq N}|f_j|\in X$,
 \begin{equation}\label{eq 20260701-4}
  \left\|\max\limits_{1\leq j\leq N}\M f_j\right\|_X
  \leq C\left\|\max\limits_{1\leq j\leq N}|f_j|\right\|_X.
 \end{equation}
 \item[$(c)$] There is a constant $C>0$ such that, for every sequence
 $\{f_j\}_{j=1}^{\infty}$ with $\sup\limits_{j\geq1}|f_j|\in X$,
 \[
  \left\|\sup\limits_{j \in{\mathbb N}}\M f_j\right\|_X
  \leq C\left\|\sup\limits_{j \in{\mathbb N}}|f_j|\right\|_X.
 \]
 \end{enumerate}
These equivalent conditions need not imply that $\M$ is bounded on $X'$.
\end{enumerate}
\end{theorem}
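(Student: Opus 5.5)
We split the proof according to the four parts of Theorem~\ref{thm 20260701-1}. For $(i)$, the implication $(a)\Rightarrow(c)$ is the known one: we apply the extrapolation theorem of \cite[Theorem~1.1]{INS2026} to the Andersen--John weighted vector-valued inequality $\|(\sum_j(\M f_j)^q)^{1/q}\|_{L^p(w)}\lesssim\|(\sum_j|f_j|^q)^{1/q}\|_{L^p(w)}$ for $w\in A_p$. The implication $(c)\Rightarrow(b)$ is trivial. For $(b)\Rightarrow(a)$ we aim at the finite dyadic sparse criterion, Proposition~\ref{prop 20260704-1}: we must bound $A_{\mathcal S}f=\sum_{Q\in\mathcal S}\avg{f}_Q\one_Q$, for finite sparse families $\mathcal S$ and $f\geq0$, uniformly on $X$.

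The route to the sparse bound goes through the sparse sets. For each $Q\in\mathcal S$ we take the pairwise disjoint sets $E_Q\subset Q$ with $|E_Q|\geq\eta|Q|$ and put $f_Q=f\one_{E_Q}$. Disjointness gives $(\sum_Q|f_Q|^q)^{1/q}\leq f$. Also $\avg{f\one_{E_Q}}_Q$ dominates a fixed multiple of averages over subsets, so $\avg{f}_Q\one_Q\lesssim\M(f\one_{E_Q})$ fails in general. We therefore instead use the pointwise bound $\avg{f}_Q\one_Q\leq\M f\cdot\one_Q$ combined with $\one_Q\leq\eta^{-1}\M(\one_{E_Q})$. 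This suggests applying \eqref{eq 20260701-3} to $f_Q=g\one_{E_Q}$ with $g$ the averaging-related function, to get the sparse $\ell^q$ bound
\[
\Big\|\Big(\sum_{Q\in\mathcal S}(\avg{f}_Q\one_Q)^q\Big)^{1/q}\Big\|_X\lesssim\|f\|_X .
\]
Concretely, we write $\avg{f}_Q\one_Q\le\eta^{-1}\M(\avg{f}_Q\one_{E_Q})$ and note $\sum_Q(\avg f_Q\one_{E_Q})^q\le(\M f)^q$ by disjointness. This needs $\M$ bounded on $X$ first, and we get that from the $N=1$ case of \eqref{eq 20260701-3}. The binomial iteration then upgrades $\ell^q$ to $\ell^1$. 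Expanding $(\sum_Q a_Q\one_Q)^{m}$ for integer $m$ and using the sparse nesting property $\sum_{Q'\subset Q}\avg f_{Q'}|Q'|\lesssim$ (Carleson packing) lets one dominate $A_{\mathcal S}f$ by iterated sparse $\ell^q$-type operators applied to $A_{\mathcal S}f$ and $\M f$. This yields $\|A_{\mathcal S}f\|_X\le C'\|f\|_X$ with $C'$ depending only on $C,q,\eta$. This step is the main obstacle. The first thing we would try is the bootstrapping identity $A_{\mathcal S}f\lesssim(\sum_Q(\avg f_Q)^q\one_Q\cdot(\text{tail}_Q)^{q'})$-type Hölder splitting with the tail $\sum_{Q'\supset Q}\avg{f}_{Q'}$, closed by an absorption argument. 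This absorption is legitimate because $\mathcal S$ is finite, so all norms are finite.

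For $(ii)$, given $q\le1$, we take a ball $B$ and $f_j=\one_{B_j}$ for disjoint small subballs $B_j\subset B$ of equal size, $j=1,\dots,N$. The right side is then $\|\one_B\|_X$. On the left, $\M f_j\gtrsim N^{-1}$ on $B$, so for $q<1$ the left side is at least $N^{1/q-1}\|\one_B\|_X\to\infty$. For $q=1$ we instead use that $\M\one_{B_j}(x)\sim(r/|x-c_j|)^n$ sums to about $\log N$ on $B$, which again diverges.

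For $(iii)$, we apply the finite inequality to truncations, obtaining monotone increasing left sides, and pass to the limit via the Fatou property of $X$; the converse is trivial. For $(iv)$, $(c)\Rightarrow(b)\Rightarrow(a)$ are trivial, and $(a)\Rightarrow(c)$ holds with the same $C$ because $\sup_j\M f_j\le\M(\sup_j|f_j|)$. For the final remark we take $X=L^\infty$: there $\M$ is bounded, but $X'=L^1$ and $\M$ is not bounded on $L^1$.
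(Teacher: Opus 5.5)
You have a genuine gap in $(b)\Rightarrow(a)$, at the step you yourself call the main obstacle. The rest agrees in substance with the paper: parts $(ii)$--$(iv)$, the implication $(a)\Rightarrow(c)$ via Andersen--John plus extrapolation, and the sparse $\ell^q$ bound $\|\T_{\Sscr,q}f\|_X\le\eta^{-1}C^2\|f\|_X$ obtained from $h_Q=\avg{f}_Q\one_{E_Q}$.

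What is missing is a uniform bound for $\A_{\Sscr}$ derived from the uniform bound for $\T_{\Sscr,q}$, and none of the tools you list supplies it as written:
\begin{itemize}
\item Your displayed ``bootstrapping identity'' $\A_{\Sscr}f\lesssim\sum_Q\avg{f}_Q^q(\mathrm{tail}_Q)^{q'}\one_Q$ has degree $q+q'$ in $f$ on the right and degree $1$ on the left, so it cannot hold.
\item The Carleson packing inequality has no right-hand side. Its natural completion $\sum_{Q'\subseteq Q}\avg{f}_{Q'}|Q'|\lesssim\int_Qf$ is false for sparse families: for the chain $[0,2^{-k})^n$, $0\le k\le K$, and $f=\one_{[0,2^{-K})^n}$, the ratio is $K+1$.
\item Without any convexity of $X$, expanding $(\A_{\Sscr}f)^m$ is useless for $X$-norms unless you return to a pointwise inequality of degree one before taking norms.
\item Finiteness of $\|\A_{\Sscr}f\|_X$ only permits absorption. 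You still need an estimate in which $\|\A_{\Sscr}f\|_X$ reappears on the right with a power or constant smaller than one.
\end{itemize}

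The paper fills this step with a pointwise binomial iteration, which uses only the nesting of dyadic cubes, not sparseness. Let $Q_1\supsetneq\cdots\supsetneq Q_N$ be the cubes of $\Sscr$ containing $x$, and $a_j=\avg{f}_{Q_j}$. Induction on $k$ gives $(\T_{\Sscr,q}^kf(x))^q\ge\sum_{j=1}^N\binom{N-j+k-1}{k-1}a_j^q$. For $m>q$ the $(-1/(q-1))$-th powers of these weights are summable, so H\"older's inequality gives $\A_{\Sscr}f\le\Gamma_{q,m}\T_{\Sscr,q}^mf$. Hence $\|\A_{\Sscr}f\|_X\le\Gamma_{q,m}(\eta^{-1}C^2)^m\|f\|_X$.

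Your absorption idea can also be repaired, with a different splitting. Put $t_j=a_1+\cdots+a_j=\mathrm{tail}_{Q_j}$. Convexity of $t\mapsto t^q$ gives the correct version of your display, $(\A_{\Sscr}f)^q\le q\sum_{Q\in\Sscr}\avg{f}_Q(\mathrm{tail}_Q)^{q-1}\one_Q$. H\"older's inequality with $t_j\le\avg{\A_{\Sscr}f}_{Q_j}$ then yields
\[
 \A_{\Sscr}f\le q^{1/q}\,(\T_{\Sscr,q}f)^{1/q}\,(\T_{\Sscr,q}\A_{\Sscr}f)^{1/q'}
 \quad\text{pointwise}.
\]
In any normed lattice, $\|u^{1/q}v^{1/q'}\|_X\le\|u\|_X^{1/q}\|v\|_X^{1/q'}$, by weighted AM--GM with a scaling parameter. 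Also $\A_{\Sscr}f\in X$ because $\Sscr$ is finite. Dividing by $\|\A_{\Sscr}f\|_X^{1/q'}$ therefore gives $\|\A_{\Sscr}f\|_X\le q(\eta^{-1}C^2)^q\|f\|_X$, a legitimate alternative to the paper's iteration.

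One of these two arguments has to be supplied; as written, your proposal does not prove $(b)\Rightarrow(a)$.
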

The characterization in Theorem~\ref{thm 20260701-1} is also useful
for concrete ball Banach function spaces; see, for example,
\cite[Theorem~4.12]{WanYangZhao2025}.

Let $s\in(1,\infty)$ and let $X$ be a Banach lattice. We say that
$X$ is \emph{$s$-concave} if there exists a constant $C>0$ such that,
for every finite family $\{x_j\}_{j=1}^N\subset X$,
\[
\left(\sum_{j=1}^N \|x_j\|_X^s\right)^{1/s}
\le
C
\left\|
\left(\sum_{j=1}^N |x_j|^s\right)^{1/s}
\right\|_X.
\]
Lerner recently proved that, if $X$ is an $s$-concave Banach
function space for some $s\in(1,\infty)$ and $\M$ is bounded on
$X$, then $\M$ is also bounded on $X'$; see
\cite[Corollary~1.4]{Lerner2026}. Our result is different in nature.
Without assuming $r_0$-convexity for any $r_0>1$, $s$-concavity,
or order continuity, Theorem~\ref{thm 20260701-1} $(i)$ characterizes the boundedness of $\M$ on $X$ and $X'$ by the finite vector-valued maximal inequality \eqref{eq 20260701-3}. The
principal new contribution is the converse implication: the finite
vector-valued inequality itself implies the boundedness of $\M$ on
$X'$, without any additional convexity, concavity, or
order-continuity assumption.

We briefly describe the converse argument. Let $\Sscr$ be a finite sparse
subcollection of a dyadic grid and define
\[
 \T_{\Sscr,q}f
 =\left(\sum_{Q\in\Sscr}\avg{|f|}_Q^q\one_Q\right)^{\frac1q},
 \qquad
 \A_{\Sscr}f
 =\sum_{Q\in\Sscr}\avg{|f|}_Q\one_Q.
\]
The estimate in \eqref{eq 20260701-3} first gives a bound for
$\T_{\Sscr,q}$ that is independent of $\Sscr$. The binomial iteration in
Theorem~\ref{thm 20260703-1}, combined with this bound, gives the uniform
bound for $\A_{\Sscr}$ stated in Corollary~\ref{cor 20260703-1}. The
finite dyadic sparse criterion in Proposition~\ref{prop 20260704-1} then
yields the boundedness of $\M$ on $X'$.

To obtain the result in the quasi-Banach setting, let $Y$ be a ball
quasi-Banach function space and let $r>0$. Following the notation in \cite{Nieraeth2023}, define
\begin{equation}\label{eq 20260701-5}
 Y^r=\{h:\ |h|^{1/r}\in Y\},
 \qquad
 \norm[Y^r]{h}=\norm[Y]{|h|^{1/r}}^r.
\end{equation}

The following is a standard concavification fact; see
\cite[Definition~2.7 and the discussion following it]{Nieraeth2023}. 
\begin{lemma}[Standard concavification facts]
\label{lem 20260812-6-noi}
Let $Y$ be an $r$-convex ball quasi-Banach function space with convexity
constant one, and put $X=Y^r$. Then $X$ is a Banach lattice with the Fatou
property and contains the indicator of every ball. Moreover,
\begin{equation}\label{eq 20260812-38-noi}
 \|H\|_Y^r=\|H^r\|_X
 \qquad(H\geq0).
\end{equation}
\end{lemma}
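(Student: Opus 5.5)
The plan is to check the Banach lattice axioms for $X=Y^r$ one at a time, working directly from the definition \eqref{eq 20260701-5}. We take as known the axioms of a ball quasi-Banach function space $Y$: it is a quasi-normed lattice, it has the Fatou property, and it contains $\one_B$ for every ball $B$. The identity \eqref{eq 20260812-38-noi} follows straight from the definition: for $H\ge0$ we have $|H^r|^{1/r}=H$, so $\norm[X]{H^r}=\norm[Y]{H}^r$. The lattice property works the same way. If $|g|\le|h|$ and $h\in X$, then $|g|^{1/r}\le|h|^{1/r}\in Y$, and the lattice property of $Y$ gives $g\in X$ with $\norm[X]{g}\le\norm[X]{h}$. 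Homogeneity is also immediate: $\norm[X]{\lambda h}=\norm[Y]{|\lambda|^{1/r}|h|^{1/r}}^r=|\lambda|\norm[X]{h}$. Positive definiteness is inherited from $Y$, since $\norm[X]{h}=0$ forces $|h|^{1/r}=0$ a.e.

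For the ball indicators, note that $\one_B^{1/r}=\one_B\in Y$, so $\one_B\in X$. For the Fatou property, suppose $0\le h_k\uparrow h$ a.e. Then $h_k^{1/r}\uparrow h^{1/r}$, and the Fatou property of $Y$ gives $\norm[Y]{h_k^{1/r}}\uparrow\norm[Y]{h^{1/r}}$. Raising to the power $r$ preserves monotone limits, so $\norm[X]{h_k}\uparrow\norm[X]{h}$.

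The main step, and the only place the convexity hypothesis is used, is the triangle inequality. By the lattice property it suffices to treat $f,g\ge0$, since $|f+g|\le|f|+|g|$. Put $a=f^{1/r}$ and $b=g^{1/r}$. Then
\[
\norm[X]{f+g}=\norm[Y]{\left(a^r+b^r\right)^{1/r}}^r
\le\left(\norm[Y]{a}^r+\norm[Y]{b}^r\right)
=\norm[X]{f}+\norm[X]{g},
\]
where the inequality is exactly $r$-convexity of $Y$ with constant one, applied to the two-element family $\{a,b\}$. Thus $\norm[X]{\cdot}$ is a genuine norm, not merely a quasi-norm.

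Completeness then follows from the Fatou property together with the triangle inequality, by the standard Riesz--Fischer argument. Take $\sum_k\norm[X]{h_k}<\infty$. The triangle inequality bounds the partial sums $\sum_{k\le K}|h_k|$ uniformly, and Fatou gives $\sum_k|h_k|\in X$, which in particular is finite a.e. The tails are controlled the same way, so the series converges in $X$. If one prefers to avoid the Riesz--Fischer argument, completeness can be transferred from $Y$, but the Fatou route is cleaner.

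Nothing here is deep. The only real care needed is to make sure the convexity inequality is invoked with constant exactly one, since that is what yields a true norm on $X$ rather than a quasi-norm.
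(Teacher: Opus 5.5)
Your argument is correct, and it is more self-contained than the paper's. The paper does not check the axioms for $X=Y^r$ by hand; see Lemmas~\ref{lem 20260705-1} and~\ref{lem 20260705-2}, which carry the content of this lemma. It first notes that $Y$ has the saturation property. It then cites the standard concavification results of \cite{Nieraeth2023} for two facts: $Y^r$ is a quasi-Banach function space, and its quasi-norm is a norm exactly when $Y$ is $r$-convex with constant one. On top of these it adds only the two observations you also make, namely $\one_B^{1/r}=\one_B$ and the identity \eqref{eq 20260812-38-noi}, which is immediate from the definition.

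Your route proves the cited facts directly. It makes explicit that $r$-convexity with constant one enters only through the triangle inequality. It also obtains completeness from the Fatou property by the Riesz--Fischer argument, so it needs neither the completeness of $Y$ nor its saturation property. The citation buys brevity and places the lemma inside Nieraeth's framework, including the converse direction of the norm criterion, which is not needed here.

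One step deserves an extra line in your write-up: the claim that $S=\sum_k|h_k|$ is finite almost everywhere. Suppose $S=\infty$ on a set $E$ with $|E|>0$. Applying the Fatou property to $\min\{S_K,n\}\one_E\uparrow n\one_E$ as $K\to\infty$ gives $n\norm[X]{\one_E}\leq\sum_k\norm[X]{h_k}$ for every $n$. This forces $\norm[X]{\one_E}=0$, contradicting $|E|>0$. Similarly, your Fatou step should carry the hypothesis $\sup_k\norm[X]{h_k}<\infty$. That hypothesis transfers to $Y$ through $\norm[Y]{h_k^{1/r}}=\norm[X]{h_k}^{1/r}$.
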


We assume that $Y$ is $r$-convex with constant one
(see Definition \ref{def 20260705-2}). Under this
assumption, the quantity in \eqref{eq 20260701-5} is a norm. Put
$X=Y^r$. For a finite collection $\Sscr$ of cubes and
$f\in L^r_{\mathrm{loc}}(\R^n)$, define
\begin{equation}\label{eq 20260701-6}
 \M_r f=\bigl[\M(|f|^r)\bigr]^{1/r},
 \qquad
 \A_{\Sscr,r}f
 =\bigl[\A_{\Sscr}(|f|^r)\bigr]^{1/r}.
\end{equation}
The following theorem is a direct rescaling
consequence of Theorem~\ref{thm 20260701-1}.

\begin{theorem}[Powered vector-valued maximal characterization]\label{thm 20260701-2}
Let $Y$ be an $r$-convex ball quasi-Banach function space with
convexity constant one for some $r>0$, and put $X=Y^r$ with the norm in
\eqref{eq 20260701-5}.
\begin{enumerate}
 \item[$(i)$] The following statements are equivalent.
 \begin{enumerate}
 \item[$(a)$] $\M:X\to X$ and $\M:X'\to X'$ are bounded.
 \item[$(b)$] There exist $q>r$ and a constant $C>0$ such that, for every
 $N\in\N$ and every finite family
 $\{f_j\}_{j=1}^N\subset L^r_{\mathrm{loc}}(\R^n)$,
 \begin{equation}\label{eq 20260701-7}
  \left\|\left(\sum_{j=1}^N(\M_r f_j)^q\right)^{\frac1q}\right\|_Y
  \leq C
  \left\|\left(\sum_{j=1}^N|f_j|^q\right)^{\frac1q}\right\|_Y
 \end{equation}
 whenever the quantity on the right is finite. The constant $C$ is
 independent of $N$ and of the family $\{f_j\}_{j=1}^N$.
 \item[$(c)$] For every $q>r$, there is a constant $C>0$ such that
 \eqref{eq 20260701-7} holds for every $N\in\N$ and every finite family
 $\{f_j\}_{j=1}^N\subset L^r_{\mathrm{loc}}(\R^n)$ for which the
 quantity on the right is finite, with $C$ independent of $N$ and of
 the family.
 \item[$(d)$] There is a constant $C_{\mathrm{sp}}>0$ such that
 \begin{equation}\label{eq 20260702-17a}
  \norm[Y]{\A_{\Sscr,r}f}
  \leq C_{\mathrm{sp}}\norm[Y]{f}
 \end{equation}
 for every finite $1/2$-sparse subcollection $\Sscr$ of every dyadic
 grid and every $f\in Y\cap L^r_{\mathrm{loc}}(\R^n)$.
 \end{enumerate}
 Each of these conditions implies
 $Y\subset L^r_{\mathrm{loc}}(\R^n)$. Moreover, for $q>r$, a constant
 $C$ works in \eqref{eq 20260701-7} if and only if $C^r$ works in
 \eqref{eq 20260701-3} on $X$ with $q/r$ in place of $q$.

 \item[$(ii)$] Let $q>0$. Suppose that there is a finite constant $C$ such that
 \eqref{eq 20260701-7} holds for every $N\in\N$ and every finite
 family $\{f_j\}_{j=1}^N\subset L^r_{\mathrm{loc}}(\R^n)$ for which
 the quantity on the right is finite, with $C$ independent of $N$ and
 of the family. Then necessarily $q>r$ and
 $Y\subset L^r_{\mathrm{loc}}(\R^n)$. In particular, no estimate in
 \eqref{eq 20260701-7} can hold for every $N\in\N$ when
 $0<q\leq r$.

 \item[$(iii)$] Fix $q>r$ and suppose that \eqref{eq 20260701-7}, with constant
 $C$, holds for every $N\in\N$. Then, for every sequence
 $\{f_j\}_{j=1}^{\infty}$ satisfying
 $\bigl(\sum_{j=1}^{\infty}|f_j|^q\bigr)^{\frac1q}\in Y$, one has
 \[
 \left\|\left(\sum_{j=1}^{\infty}(\M_r f_j)^q\right)^{\frac1q}\right\|_Y
 \leq C
 \left\|\left(\sum_{j=1}^{\infty}|f_j|^q\right)^{\frac1q}\right\|_Y.
 \]
 Conversely, this estimate for all sequences implies
 \eqref{eq 20260701-7} by taking $f_j=0$ for $j>N$, with the same
 constant $C$.

 \item[$(iv)$] Suppose that $Y\subset L^r_{\mathrm{loc}}(\R^n)$. The following
 statements are equivalent. The constant $C$ may be chosen to be the same
 in $(a)$--$(c)$.
 \begin{enumerate}
 \item[$(a)$] There is a constant $C>0$ such that
 \[
  \norm[Y]{\M_r f}\leq C\norm[Y]{f}
  \qquad(f\in Y).
 \]
 \item[$(b)$] There is a constant $C>0$ such that, for every $N\in\N$ and every
 finite family $\{f_j\}_{j=1}^N$ satisfying
 $\max\limits_{1\leq j\leq N}|f_j|\in Y$,
 \[
  \left\|\max\limits_{1\leq j\leq N}\M_r f_j\right\|_Y
  \leq C\left\|\max\limits_{1\leq j\leq N}|f_j|\right\|_Y.
 \]
 \item[$(c)$] There is a constant $C>0$ such that, for every sequence
 $\{f_j\}_{j=1}^{\infty}$ with $\sup\limits_{j \in{\mathbb N}}|f_j|\in Y$,
 \[
  \left\|\sup\limits_{j \in{\mathbb N}}\M_r f_j\right\|_Y
  \leq C\left\|\sup\limits_{j \in{\mathbb N}}|f_j|\right\|_Y.
 \]
 \end{enumerate}
\end{enumerate}
\end{theorem}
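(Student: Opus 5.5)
The whole statement reduces to Theorem~\ref{thm 20260701-1} applied to the Banach lattice $X=Y^r$. Lemma~\ref{lem 20260812-6-noi} says $X$ is a Banach lattice with the Fatou property that contains indicators of balls. I will first check that $X$ is a ball Banach function space. The only property still missing is the local integrability condition $\int_B|h|\le C_B\|h\|_X$, and I expect to obtain it from any of the hypotheses as explained below.

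The basic identity, valid for nonnegative functions by \eqref{eq 20260812-38-noi}, is
\begin{equation*}
 \left\|\left(\sum_{j}(\M_r f_j)^q\right)^{\frac1q}\right\|_Y^r
 =\left\|\left(\sum_{j}(\M g_j)^{q/r}\right)^{\frac rq}\right\|_X,
 \qquad g_j=|f_j|^r .
\end{equation*}
The same identity holds on the right-hand side, with $\M g_j$ replaced by $g_j$. So \eqref{eq 20260701-7} with constant $C$ and exponent $q$ is exactly \eqref{eq 20260701-3} on $X$ with exponent $q/r$ and constant $C^r$. The correspondence $f_j\leftrightarrow g_j$ is a bijection between families in $L^r_{\mathrm{loc}}$ and nonnegative families in $L^1_{\mathrm{loc}}$. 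Replacing $f_j$ by $|f_j|$ changes nothing in \eqref{eq 20260701-3}, so this proves the ``moreover'' clause of $(i)$.

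Parts $(i)$(a)--(c), $(ii)$, $(iii)$ and $(iv)$ then transfer directly from the corresponding parts of Theorem~\ref{thm 20260701-1}. The condition $q>r$ is the same as $q/r>1$. Monotone limits for $(iii)$ also transfer, since the Fatou property passes through the power map. The max/sup versions in $(iv)$ transfer because $\max_j\M_rf_j=(\max_j\M|f_j|^r)^{1/r}$.

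For $(d)$, note that $\|\A_{\Sscr,r}f\|_Y^r=\|\A_{\Sscr}(|f|^r)\|_X$. So $(d)$ is the uniform bound for the sparse operators $\A_\Sscr$ on $X$. By Proposition~\ref{prop 20260704-1}, this uniform finite dyadic sparse bound is equivalent to $(a)$. Conversely, Corollary~\ref{cor 20260703-1} derives the sparse bound from the vector-valued inequality.

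The step I expect to be the main obstacle is the inclusion $Y\subset L^r_{\mathrm{loc}}$, that is, $X\subset L^1_{\mathrm{loc}}$. Without it, $X$ is not yet known to be a ball Banach function space, and Theorem~\ref{thm 20260701-1} cannot be invoked directly. The plan is to argue from each hypothesis separately.

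Under (b) or (c), take $N=1$ and $f=h^{1/r}\one_B$ with $h\in X$, $h\ge0$. Then $\M(h\one_B)\ge\avg{h}_{B'}\one_{B'}$ for a cube $B'\supset B$. The inequality gives
\begin{equation*}
 \avg{h}_{B'}\,\|\one_{B'}\|_X\le C^r\|h\|_X .
\end{equation*}
Here $\|\one_{B'}\|_X>0$, so $\int_B h\le C_B\|h\|_X$. Finiteness matters in this step: if $\int_B h=\infty$, then $\M(h\one_B)\equiv\infty$ on $B'$, which contradicts the finiteness of the left-hand side in $X$. The same argument, with $\A_\Sscr$ for $\Sscr=\{Q\}$ a single dyadic cube, handles (d).

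Under (a), boundedness of $\M$ on $X$ gives the same conclusion. This also covers $(ii)$: for $q\le r$, the necessary local integrability is obtained by the same single-function test, and then part $(ii)$ of Theorem~\ref{thm 20260701-1} rules out $q/r\le1$. Since that part holds for every ball quasi-Banach function space, I may even apply it to $X$ before local integrability is established.
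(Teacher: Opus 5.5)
\textbf{Verdict: there is a genuine gap, in the local-integrability step, though it is easy to repair.}

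Your reduction to Theorem~\ref{thm 20260701-1} via $X=Y^r$ is the paper's. Under $(a)$, your direct test (apply $\M$ to $h\one_B$ and use $\norm[X]{\one_{B'}}>0$) is correct. It is a bit more elementary than the paper's appeal to Lemma~\ref{lem 20260705-3}, which goes through Nieraeth's Lemma~2.26.

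The gap is in the step you single out as the main obstacle, under $(b)$, $(c)$ and $(d)$. Those hypotheses are only assumed for $f_j\in L^r_{\mathrm{loc}}(\R^n)$, or for $f\in Y\cap L^r_{\mathrm{loc}}(\R^n)$ in $(d)$. Your test function $f=h^{1/r}\one_B$ lies in $L^r_{\mathrm{loc}}(\R^n)$ exactly when $\int_B h<\infty$, which is what you are trying to prove. So the sentence ``if $\int_B h=\infty$, then $\M(h\one_B)\equiv\infty$ on $B'$, which contradicts the finiteness of the left-hand side'' is circular. When $\int_B h=\infty$, the inequality is simply not available for that $f$, and nothing is contradicted.

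The same flaw affects several other steps:
\begin{itemize}
\item your single-cube argument for $(d)$;
\item the local integrability you claim in part $(ii)$;
\item your assertion that \eqref{eq 20260701-7} is ``exactly'' \eqref{eq 20260701-3} on $X$, and hence the ``moreover'' clause. A priori, \eqref{eq 20260701-7} only yields \eqref{eq 20260701-3} for locally integrable families. The two coincide only once $X\subset L^1_{\mathrm{loc}}(\R^n)$ is known.
\end{itemize}

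The repair is the one the paper uses: test on bounded, compactly supported truncations. Put $h_k=\min\{h,k\}\one_B$ and $f_k=h_k^{1/r}$, so $f_k\in Y\cap L^r_{\mathrm{loc}}(\R^n)$. Taking $N=1$ then gives $\avg{h_k}_{B'}\norm[X]{\one_{B'}}\le C^r\norm[X]{h}$. Your display should have $\avg{h\one_B}_{B'}$, not $\avg{h}_{B'}$, since $\M(h\one_B)\geq\avg{h\one_B}_{B'}\one_{B'}$. Letting $k\to\infty$ by monotone convergence gives $\int_B h\le C_B\norm[X]{h}$. The identical computation with $\A_{\{Q\}}$, for a single dyadic cube $Q\supset B$, handles $(d)$. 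Alternatively, as the paper does under $(b)$, you can first extend the vector-valued inequality on $X$ to all measurable families via truncation and the Fatou property, then take $N=1$.

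With this fix, the rest of your plan goes through. One smaller point concerns your remark that Proposition~\ref{prop 20260702-1} may be applied to $X$ before local integrability is known. That is justified only because the counterexample in its proof uses indicator functions. As stated, the proposition concerns the unrestricted inequality over all measurable families.
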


To see the relation between the two theorems, let
$f_j\in L^r_{\mathrm{loc}}(\R^n)$, set $h_j=|f_j|^r$, and put
$a=q/r$. Then
\begin{align*}
 \left\|\left(\sum_{j=1}^N(\M_r f_j)^q\right)^{\frac1q}\right\|_Y^r
 &=\left\|\left(\sum_{j=1}^N(\M h_j)^a\right)^{1/a}\right\|_X,\\
 \left\|\left(\sum_{j=1}^N|f_j|^q\right)^{\frac1q}\right\|_Y^r
 &=\left\|\left(\sum_{j=1}^N|h_j|^a\right)^{1/a}\right\|_X.
\end{align*}
Thus, after raising both sides to the power $r$,
\eqref{eq 20260701-7} becomes \eqref{eq 20260701-3} on $X$, with
$a=q/r$ in place of $q$. Similarly,
\[
 \norm[Y]{\A_{\Sscr,r}f}^r
 =\norm[X]{\A_{\Sscr}(|f|^r)}.
\]
These identities reduce the proof of Theorem~\ref{thm 20260701-2} to
Theorem~\ref{thm 20260701-1}, after the completeness and local-integrability
properties of $X$ have been verified.

The paper is organized as follows. Section~\ref{sec 20260702-1} contains
the definitions and known results used throughout the paper.
Section~\ref{sec 20260703-1} derives uniform sparse bounds from
\eqref{eq 20260701-3} under the assumption that this estimate holds for
some $q\in(1,\infty)$. Section~\ref{sec 20260704-1} recalls the finite
dyadic sparse criterion needed here and then proves
Theorem~\ref{thm 20260701-1}. Section~\ref{sec 20260705-1} is devoted to
the proof of Theorem~\ref{thm 20260701-2}.

Our results also extend to the local Hardy--Littlewood maximal operator
by using Theorem~\ref{thm 20260703-1}. Finally,
Section~\ref{sec 20260812-2-noi} establishes the local versions of
Theorems~\ref{thm 20260701-1} and~\ref{thm 20260701-2} by applying the
results of Section~\ref{sec 20260703-1}.

\section{Ball function spaces, maximal operators, and sparse families}\label{sec 20260702-1}

Section \ref{sec 20260702-1} collects preliminary facts.
\subsection{The function-space setting}

Fix $n\in\N$. We work on $\R^n$ with Lebesgue measure. For
$1<a<\infty$, write $a'=a/(a-1)$. The operators used below are
positively homogeneous but need not be linear, and they may initially
have the value $+\infty$. If $T$ assigns an extended measurable function
$Tf$ to every element $f$ of a quasi-normed function space $Z$, we set
\begin{equation}\label{eq 20260702-1}
 \norm[Z\to Z]{T}
 :=\sup\limits_{0\neq f\in Z}\frac{\norm[Z]{Tf}}{\norm[Z]{f}}
 \in[0,\infty],
\end{equation}
where the quotient is interpreted as $+\infty$ when $Tf\notin Z$.
Thus $T:Z\to Z$ is bounded exactly when the quantity in
\eqref{eq 20260702-1} is finite.

We invoke the notion of ball Banach function spaces from \cite{SHYY17}.
\begin{definition}\label{def 20260702-1}
Let $X$ be a normed linear space of measurable functions on $\R^n$,
where functions that agree almost everywhere are identified. We call
$X$ a \emph{normed function lattice} if it satisfies property $(i)$
below. We call $X$ a \emph{ball Banach function space} if it is complete
and satisfies properties $(i)$--$(iv)$.
\begin{enumerate}
 \item[$(i)$] If $f\in X$ and $|g|\leq |f|$ almost everywhere, then
 $g\in X$ and $\norm[X]{g}\leq\norm[X]{f}$.
 \item[$(ii)$] If $f_k\in X$, $0\leq f_k\uparrow f$ almost everywhere, and
 $\sup\limits_k\norm[X]{f_k}<\infty$, then $f\in X$ and
 \[
 \norm[X]{f}=\sup\limits_k\norm[X]{f_k}.
 \]
 \item[$(iii)$] For every ball $B\subset\R^n$, one has $\one_B\in X$.
 \item[$(iv)$] For every ball $B\subset\R^n$, there is a constant $C_B$
 such that
 \[
 \int_B|f(x)|\,dx\leq C_B\norm[X]{f}
 \qquad(f\in X).
 \]
\end{enumerate}
\end{definition}

We also present
the definition of ball quasi-Banach function spaces.
\begin{definition}\label{def 20260702-2}
A quasi-Banach space $Y$ of measurable functions on $\R^n$ is called
a \emph{ball quasi-Banach function space} if the following properties hold.
\begin{enumerate}
 \item[$(i)$] If $f\in Y$ and $|g|\leq |f|$ almost everywhere, then
 $g\in Y$ and $\norm[Y]{g}\leq\norm[Y]{f}$.
 \item[$(ii)$] If $f_k\in Y$, $0\leq f_k\uparrow f$ almost everywhere, and
 $\sup\limits_k\norm[Y]{f_k}<\infty$, then $f\in Y$ and
 \[
 \norm[Y]{f}=\sup\limits_k\norm[Y]{f_k}.
 \]
 \item[$(iii)$] For every ball $B\subset\R^n$, one has $\one_B\in Y$.
\end{enumerate}
\end{definition}

Let $X$ be a normed function lattice that contains the indicator of every
bounded measurable set. Its K\"othe associate space is
\begin{equation}\label{eq 20260702-2}
 X'=\left\{g:\ \norm[X']{g}:=
 \sup\limits_{\norm[X]{f}\leq1}\int_{\R^n}|f(x)g(x)|\,dx<\infty\right\}.
\end{equation}
Since $\one_E\in X$ for every bounded measurable set $E$, each $g\in X'$
is locally integrable. More precisely,
\[
 \int_E|g(x)|\,dx\leq\norm[X]{\one_E}\norm[X']{g}<\infty.
\]
The generalized H\"older inequality is
\begin{equation}\label{eq 20260702-3}
 \int_{\R^n}|f(x)g(x)|\,dx
 \leq\norm[X]{f}\norm[X']{g}.
\end{equation}

\begin{lemma}\label{lem 20260702-associate-fatou}
Let $X$ be a normed function lattice and let $X'$ be defined by
\eqref{eq 20260702-2}. If $0\leq g_k\uparrow g$ almost everywhere and
$\sup\limits_k\norm[X']{g_k}<\infty$, then $g\in X'$ and
\[
 \norm[X']{g}=\sup\limits_k\norm[X']{g_k}.
\]
In particular, $X'$ has the Fatou property.
\end{lemma}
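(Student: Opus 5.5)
The plan is to reduce everything to the monotone convergence theorem, applied inside the supremum that defines $\norm[X']{\cdot}$ in \eqref{eq 20260702-2}. Set $L:=\sup_k\norm[X']{g_k}<\infty$. Since $0\leq g_k\leq g_{k+1}$ almost everywhere, the lattice property of $X'$ gives $\norm[X']{g_k}\leq\norm[X']{g_{k+1}}$. Indeed, for any $f$ we have $|fg_k|\leq|fg_{k+1}|$, so the same inequality holds after integrating and taking the supremum. Thus $\norm[X']{g_k}\uparrow L$. Also $g_k\leq g$, so the same argument gives $\norm[X']{g_k}\leq\norm[X']{g}$ once the latter is defined as an extended quantity, because the supremum in \eqref{eq 20260702-2} makes sense in $[0,\infty]$ for any measurable $g$. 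Hence $L\leq\norm[X']{g}$.

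For the reverse inequality, I fix $f$ with $\norm[X]{f}\leq1$. Then $|f|g_k\uparrow|f|g$ almost everywhere, and these are nonnegative measurable functions. The monotone convergence theorem gives
\[
 \int_{\R^n}|f(x)g(x)|\,dx=\lim_{k\to\infty}\int_{\R^n}|f(x)|g_k(x)\,dx\leq\sup_k\norm[X']{g_k}=L,
\]
where each integral on the right is bounded by $\norm[X']{g_k}$ by the definition of the associate norm (no Hölder inequality is needed, only the definition). Taking the supremum over all such $f$ yields $\norm[X']{g}\leq L<\infty$. Therefore $g\in X'$ and $\norm[X']{g}=L$.

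The final sentence, that $X'$ has the Fatou property, is simply property (ii) of Definition \ref{def 20260702-1}, and this is exactly what has been proved. One small point needs care: the $g_k$ are assumed to lie in $X'$, which is automatic since their norms are finite. There is also the a.e.\ identification: the limit $g$ is defined only almost everywhere, and the integrals do not see null sets. The main (mild) obstacle is ensuring that the definition of $X'$ does not require the ambient hypothesis that $X$ contains indicators. That hypothesis is used only for local integrability, which is not needed here, so the argument applies to any normed function lattice.
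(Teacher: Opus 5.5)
Your proposal is correct and follows the paper's proof: monotone convergence inside the defining supremum gives $\|g\|_{X'}\le\sup_k\|g_k\|_{X'}$, and the reverse inequality comes from $g_k\le g$ and the lattice property of $X'$. Bounding $\int|f|g_k$ directly by the definition of the associate norm, rather than by the generalized H\"older inequality as the paper does, is the same step phrased differently.
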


\begin{proof}
Put $C=\sup\limits_k\norm[X']{g_k}$. For every $f\in X$, the monotone
convergence theorem and \eqref{eq 20260702-3} give
\[
 \int_{\R^n}|f(x)|g(x)\,dx
 =\lim_{k\to\infty}\int_{\R^n}|f(x)|g_k(x)\,dx
 \leq C\norm[X]{f}.
\]
Thus $g\in X'$ and $\norm[X']{g}\leq C$. The reverse inequality follows
from $g_k\leq g$ and the lattice property of $X'$.
\end{proof}

We collect some fundamental properties of ball Banach function spaces. 
\begin{lemma}\label{lem 20260702-2}
Let $X$ be a ball Banach function space.
\begin{enumerate}
 \item[$(i)$] If $E$ is a bounded measurable set, then $\one_E\in X$.
 \item[$(ii)$] Both $X$ and $X'$ are continuously embedded into
 $L^1(E)$ for every bounded measurable set $E$.
 \item[$(iii)$] If $E$ has positive measure, then there is a measurable subset
 $F\subset E$ such that $|F|>0$ and $\one_F\in X$.
\end{enumerate}
\end{lemma}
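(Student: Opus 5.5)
We prove the three items of Lemma~\ref{lem 20260702-2} in order, using only Definition~\ref{def 20260702-1} and the definition \eqref{eq 20260702-2} of $X'$.

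For $(i)$, let $E$ be a bounded measurable set and choose a ball $B$ with $E\subset B$. Then $0\leq\one_E\leq\one_B$, and $\one_B\in X$ by property $(iii)$. The lattice property $(i)$ therefore gives $\one_E\in X$ and $\norm[X]{\one_E}\leq\norm[X]{\one_B}$.

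For $(ii)$, again fix a ball $B\supset E$. Property $(iv)$ gives
\[
\int_E|f|\leq\int_B|f|\leq C_B\norm[X]{f}\qquad(f\in X),
\]
which is the continuous embedding $X\hookrightarrow L^1(E)$. For $X'$, part $(i)$ shows $\one_E\in X$. Taking $f=\one_E$ in the generalized H\"older inequality \eqref{eq 20260702-3} gives
\[
\int_E|g|\leq\norm[X]{\one_E}\norm[X']{g}\qquad(g\in X').
\]
This is the embedding $X'\hookrightarrow L^1(E)$, with constant $\norm[X]{\one_E}$, which is finite by $(i)$.

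For $(iii)$, let $|E|>0$. The sets $E\cap B(0,k)$ increase to $E$, so by continuity of Lebesgue measure some $k$ satisfies $|E\cap B(0,k)|>0$. Put $F=E\cap B(0,k)$. Then $F\subset E$, $|F|>0$, and $F$ is bounded, so $\one_F\in X$ by part $(i)$.

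The argument is routine. The only point needing care is that $X'$ is defined through the supremum in \eqref{eq 20260702-2}, so local integrability of its elements must come from testing against $\one_E$. This is legitimate precisely because part $(i)$ has already been established, which is why $(i)$ is proved first.
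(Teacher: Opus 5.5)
Your proof is correct and matches the paper's argument essentially step for step: domination by $\one_B$ for $(i)$, property $(iv)$ together with the generalized H\"older inequality tested against $\one_E$ for $(ii)$, and the bounded truncation $F=E\cap B(0,k)$ for $(iii)$. Your explicit appeal to continuity of Lebesgue measure to choose $k$ simply spells out a step the paper leaves implicit.
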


\begin{proof}
Let $E$ be bounded and choose a ball $B$ containing $E$. Since
$\one_E\leq\one_B$, part $(i)$ follows from
Definition~\ref{def 20260702-1}$(i)$,$(iii)$. For $f\in X$,
\[
 \int_E|f(x)|\,dx\leq\int_B|f(x)|\,dx\leq C_B\norm[X]{f},
\]
so the embedding for $X$ in part $(ii)$ follows from
Definition~\ref{def 20260702-1}$(iv)$.
Let us establish that $X'$ is embedded into $L^1(E)$
in part $(ii)$. If $g\in X'$, then
\[
 \int_E|g(x)|\,dx
 \leq\norm[X]{\one_E}\norm[X']{g},
\]
which proves the estimate for $X'$. Finally, if $|E|>0$, then
$|E\cap B(0,R)|>0$ for some $R>0$. Part $(i)$, applied to
$F=E\cap B(0,R)$, proves part $(iii)$.
\end{proof}

We recall \cite[Theorem 3.6]{LoristNieraeth2024} and \cite[Theorem 71.1]{Zaanen1967}.
\begin{lemma}{\rm \cite[Theorem 3.6]{LoristNieraeth2024} and \cite[Theorem 71.1]{Zaanen1967}}\label{lem 20260812-1-noi}
Let $X$ be a ball Banach function space. Then $X'$ is a ball Banach
function space, both $X$ and $X'$ are continuously embedded into $L^1(E)$
for every bounded measurable set $E$, and $X''=X$ with equality of norms.
\end{lemma}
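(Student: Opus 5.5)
This lemma is quoted from \cite[Theorem 3.6]{LoristNieraeth2024} and \cite[Theorem 71.1]{Zaanen1967}. To make the dependence transparent, I would verify its three assertions one by one from Definition~\ref{def 20260702-1}, Lemma~\ref{lem 20260702-associate-fatou} and Lemma~\ref{lem 20260702-2}.

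\emph{The embeddings into $L^1(E)$.} These are exactly Lemma~\ref{lem 20260702-2}$(ii)$, so nothing new is needed.

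\emph{$X'$ is a ball Banach function space.} I would check the axioms in turn.
\begin{itemize}
\item[$\bullet$] The lattice property $(i)$ is immediate from the definition \eqref{eq 20260702-2}.
\item[$\bullet$] The Fatou property $(ii)$ is Lemma~\ref{lem 20260702-associate-fatou}.
\item[$\bullet$] Property $(iii)$: for a ball $B$ and $\norm[X]{f}\le1$, Definition~\ref{def 20260702-1}$(iv)$ gives $\int|f|\one_B\le C_B$. Hence $\one_B\in X'$.
\item[$\bullet$] Property $(iv)$: $\int_B|g|\le\norm[X]{\one_B}\norm[X']{g}$, by \eqref{eq 20260702-3}.
\item[$\bullet$] Norm: if $\norm[X']{g}=0$, then $\int_B|g|=0$ for every ball $B$, so $g=0$ a.e.
\item[$\bullet$] Completeness: take a Cauchy sequence, pass to a subsequence with $\sum_k\norm[X']{g_{k+1}-g_k}<\infty$, and apply the Fatou property to the monotone partial sums $|g_1|+\sum_{k\le m}|g_{k+1}-g_k|$. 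This gives a.e.\ convergence to some $g\in X'$. Applying Fatou again to the tails gives norm convergence, the standard Riesz--Fischer argument.
\end{itemize}

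\emph{$X''=X$ isometrically.}
\begin{itemize}
\item[$\bullet$] The inclusion $X\subset X''$ with $\norm[X'']{f}\le\norm[X]{f}$ follows from \eqref{eq 20260702-3}.
\item[$\bullet$] For the reverse, first take $f\ge0$, bounded, and supported in a ball, with $f\in X''$. Then $f\in X$ by Lemma~\ref{lem 20260702-2}$(i)$ and the lattice property. It remains to show $\norm[X]{f}\le\norm[X'']{f}$, which needs a norming functional. Suppose $\norm[X]{f}>\norm[X'']{f}$. By Hahn--Banach there is a continuous functional $\phi$ on $X$ with $\phi(f)>\norm[X'']{f}$ and $\norm{\phi}\le1$. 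The task is then to replace $\phi$ by integration against some $g\in X'$, at least on the functions $h$ with $0\le h\le f$.
\item[$\bullet$] General $f\in X''$: approximate by $f_k=\min(|f|,k)\one_{B(0,k)}\uparrow|f|$. Then $\norm[X]{f_k}=\norm[X'']{f_k}\le\norm[X'']{f}$, and the Fatou property of $X$ gives $f\in X$ with $\norm[X]{f}\le\norm[X'']{f}$.
\end{itemize}

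The main obstacle is the representation of $\phi$. I would use the Fatou property to show that $\phi$ restricted to the order ideal generated by $f$ splits into an order-continuous part, which is integral, and a singular part. The singular part is killed by the Fatou property, via the Lorentz--Luxemburg argument: the Fatou norm equals the supremum over order-continuous functionals, and these are integrals against elements of $X'$. This is precisely the content of \cite[Theorem 71.1]{Zaanen1967}, which I would cite at that point rather than reproduce.

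Alternatively, one can argue in finite dimensions. Restrict to functions on a ball that are simple with respect to finitely many sets, and apply Hahn--Banach there, where every functional is integral. Then pass to the limit using the Fatou property of both $X$ and $X'$ (Lemma~\ref{lem 20260702-associate-fatou}) along a refining sequence of partitions.
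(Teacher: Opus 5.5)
Your main line is correct and is essentially the paper's own treatment. The paper gives no argument beyond citing Lorist--Nieraeth and Zaanen and noting that saturation (Lemma~\ref{lem 20260702-2}$(iii)$) plus the Fatou property yield $X''=X$ via the Lorentz--Luxemburg theorem. You likewise defer the one nontrivial inequality $\|f\|_X\le\|f\|_{X''}$ to Zaanen's Theorem~71.1. Your routine checks are all sound: the axioms and completeness of $X'$, the contractive inclusion $X\subset X''$, and the Fatou reduction to bounded, boundedly supported $f$.

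Your closing finite-dimensional alternative, however, would not work as described.
\begin{itemize}
\item[$\bullet$] A Hahn--Banach functional on the space $S_{\mathcal P}$ of functions simple with respect to a partition $\mathcal P$ is represented by some $g\in S_{\mathcal P}$. But $g$ is only normed against elements of $S_{\mathcal P}$. For general $h\in X$ one has $\int|h||g|=\int E_{\mathcal P}(|h|)\,|g|$, where $E_{\mathcal P}$ averages over the cells. Since $E_{\mathcal P}$ need not be contractive on $X$, $\|g\|_{X'}\le 1$ can fail; already for $X=L^1(w)$ with $w$ nonconstant on a cell one gets $\|g\|_{X'}>1$.
\item[$\bullet$] Along refining partitions these $g$'s are not monotone either, so the Fatou property of $X'$ has nothing to act on.
\item[$\bullet$] In fact the finite-dimensional step works equally well for non-Fatou norms, for which $\|\cdot\|_{X''}\neq\|\cdot\|_X$. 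So all of the content would have to sit in an unspecified limit passage.
\end{itemize}

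If you want a self-contained proof, the standard separation argument works in this setting. For bounded $E$, let $K_E=\{h\in X:\ h=0\text{ a.e.\ off }E,\ \|h\|_X\le1\}$.
\begin{itemize}
\item[$\bullet$] $K_E$ lies in $L^1(E)$ by Definition~\ref{def 20260702-1}$(iv)$.
\item[$\bullet$] $K_E$ is closed in $L^1(E)$ by the Fatou property: pass to an a.e.\ convergent subsequence and use $\inf_{k\ge m}|h_k|\uparrow|h|$.
\item[$\bullet$] Suppose a bounded $f\ge0$ supported in $E$ had $\|f\|_X>1$. Separating $f$ from the convex solid set $K_E$ in $L^1(E)$ gives $\phi\in L^\infty(E)\subset X'$ with $\int f|\phi|>\sup_{h\in K_E}\int|h||\phi|=\|\phi\|_{X'}$. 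Hence $\|f\|_{X''}>1$.
\end{itemize}
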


We do not recall the proof but we make some comments.
Part $(iii)$ of Lemma~\ref{lem 20260702-2}
is precisely the saturation property used in the standard theory
of Banach function spaces. 
Together with the Fatou property in
Definition~\ref{def 20260702-1}$(ii)$, the Lorentz--Luxemburg theorem gives
$X''=X$ with equality of norms.

\subsection{Maximal and vector-valued operators}

The extrapolation theorem in \cite{INS2026} is stated for a centered maximal
operator over Euclidean balls
$\mathcal{M}_{\rm center}$. The operator $\mathcal{M}_{\rm center}$
and the operator in
\eqref{eq 20260702-4} dominate each other pointwise up to constants depending
only on $n$. Consequently, the boundedness on $X$ and $X'$, and the
estimates obtained by extrapolation, are unchanged apart from dimensional
constants.

A weight is a locally integrable function $w$ such that
$0<w<\infty$ almost everywhere. For $1<p<\infty$, the Muckenhoupt
class $A_p$ consists of the weights satisfying
\begin{equation}\label{eq 20260702-5}
 [w]_{A_p}
 =\sup\limits_Q\avg{w}_Q
 \left(\avg{w^{-1/(p-1)}}_Q\right)^{p-1}
 <\infty.
\end{equation}
We write
\[
 \norm[L^p(w)]{f}
 =\left(\int_{\R^n}|f(x)|^p w(x)\,dx\right)^{1/p}.
\]
When $w\equiv1$, we simply write $L^p=L^p(\R^n)$ and $\|f\|_{L^p}$ in place of $L^p(w)$ and $\|f\|_{L^p(w)}$, respectively.

The following weighted vector-valued estimate is due to Andersen and John
\cite{AndersenJohn1981}.

\begin{theorem}\label{thm 20260702-1}
Let $1<p,q<\infty$. There is an increasing function
$N_{n,p,q}:[1,\infty)\to(0,\infty)$ such that, for every
$w\in A_p$, every $N\in\N$, and every finite family
$\{f_j\}_{j=1}^N$ of measurable functions for which the quantity on
the right-hand side of the following inequality is finite,
\begin{equation}\label{eq 20260702-6}
 \left\|\left(\sum_{j=1}^N(\M f_j)^q\right)^{\frac1q}\right\|_{L^p(w)}
 \leq N_{n,p,q}([w]_{A_p})
 \left\|\left(\sum_{j=1}^N|f_j|^q\right)^{\frac1q}\right\|_{L^p(w)}.
\end{equation}
The function $N_{n,p,q}$ is independent of $N$.
\end{theorem}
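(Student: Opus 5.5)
The estimate \eqref{eq 20260702-6} is the classical result of Andersen and John \cite{AndersenJohn1981}. I would derive it from the scalar weighted bound for $\M$ by diagonal Rubio de Francia extrapolation, keeping track of how the constants depend on the characteristic so that the result is an increasing function of $[w]_{A_p}$.

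\emph{Step 1 (diagonal case $p=q$).} Fix $1<q<\infty$. By Muckenhoupt's theorem in Buckley's quantitative form, $\norm[L^q(v)]{\M f}\leq C_{n,q}[v]_{A_q}^{1/(q-1)}\norm[L^q(v)]{f}$ for every $v\in A_q$. Raise this to the power $q$ and sum over $j=1,\dots,N$. Then interchange sum and integral (Tonelli). This gives
\[
 \left\|\left(\sum_{j=1}^N(\M f_j)^q\right)^{\frac1q}\right\|_{L^q(v)}
 \leq C_{n,q}[v]_{A_q}^{\frac1{q-1}}
 \left\|\left(\sum_{j=1}^N|f_j|^q\right)^{\frac1q}\right\|_{L^q(v)},
\]
with a constant independent of $N$ that increases with $[v]_{A_q}$.

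\emph{Step 2 (extrapolation).} Consider the family of pairs
\[
 F=\left(\sum_{j=1}^N(\M f_j)^q\right)^{1/q},\qquad G=\left(\sum_{j=1}^N|f_j|^q\right)^{1/q}.
\]
Step 1 says $\norm[L^q(v)]{F}\leq\phi([v]_{A_q})\norm[L^q(v)]{G}$ for all $v\in A_q$, with $\phi$ increasing. The sharp form of Rubio de Francia extrapolation (for instance in Duoandikoetxea's formulation) then gives
\[
 \norm[L^p(w)]{F}\leq\Phi_{n,p,q}([w]_{A_p})\norm[L^p(w)]{G}
 \qquad(w\in A_p).
\]
Here $\Phi$ is an explicit expression in $\phi$, in the norm of $\M$ on $L^p(w)$ or on $L^{p'}(w^{1-p'})$, and in powers of $[w]_{A_p}$. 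Each ingredient is increasing in $[w]_{A_p}$. For complete safety I would define $N_{n,p,q}(t):=\sup_{1\leq s\leq t}\Phi_{n,p,q}(s)$, which is monotone by construction. Neither $\phi$ nor $\Phi$ involves $N$, since $N$ only enters through the pair $(F,G)$.

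\emph{Step 3 (finiteness issues).} The main technical obstacle is that extrapolation is usually stated for pairs with $\norm[L^p(w)]{F}<\infty$. This is needed because the Rubio de Francia algorithm divides by norms and absorbs terms. I would handle it by truncation. Replace $f_j$ by $f_j^{(R)}:=\min(|f_j|,R)\one_{B(0,R)}$. Each $f_j^{(R)}$ is bounded with compact support, so it lies in $L^p(w)$, and then $\M f_j^{(R)}\in L^p(w)$ because $w\in A_p$. Hence the truncated $F$ has finite norm, and the argument applies with $G_R\leq G$. As $R\to\infty$, $\M f_j^{(R)}\uparrow\M f_j$ pointwise by monotone convergence inside each average. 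Monotone convergence in $L^p(w)$ then gives \eqref{eq 20260702-6} for the original family, provided its right-hand side is finite.
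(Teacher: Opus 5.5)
Your proposal is correct, but it follows a different route from the paper. The paper gives no proof of this statement. It quotes the estimate from Andersen and John \cite{AndersenJohn1981} and adopts an ``increasing-envelope convention'' for the dependence on $[w]_{A_p}$.

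You instead prove it from two standard ingredients:
\begin{itemize}
\item the diagonal case $p=q$, which follows from Buckley's scalar bound and Tonelli's theorem;
\item Duoandikoetxea's sharp Rubio de Francia extrapolation, which moves the exponent $p$.
\end{itemize}
What your route buys is an explicit, power-type dependence. With $\phi(s)=C_{n,q}s^{1/(q-1)}$, sharp extrapolation from $p_0=q$ gives a bound of the form $C\,t^{\max\{1/(q-1),\,1/(p-1)\}}$. This is already increasing, so no envelope is needed. It also fits the philosophy of the paper, which uses extrapolation to pass from this weighted estimate to $X$; you use extrapolation one level lower, to move from $L^q(w)$ to $L^p(w)$.

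Two small remarks.

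First, in Step~2 you say $\Phi$ involves $\norm[L^p(w)\to L^p(w)]{\M}$. That quantity is a function of $w$, not of $[w]_{A_p}$ alone. Replace it by Buckley's bound before defining $N_{n,p,q}(t)=\sup_{1\leq s\leq t}\Phi(s)$; otherwise $\Phi(s)$ is not well defined.

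Second, the truncation in Step~3 is valid but is only a precaution:
\begin{itemize}
\item $\min(|f_j|,R)\one_{B(0,R)}\in L^p(w)$, $\M$ is bounded on $L^p(w)$, and $\M f_j^{(R)}\uparrow\M f_j$ by monotone convergence inside each average, so the limiting argument goes through.
\item Your justification for it is inaccurate, however. The extrapolation argument uses H\"older's inequality when $p<p_0$ and duality when $p>p_0$, and it never absorbs terms.
\item The finiteness clause in the usual statements is there only to accommodate a priori hypotheses. Your Step~1 holds without any a priori finiteness of the left-hand side, so that clause is not needed here.
\end{itemize}
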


We use the following known extrapolation theorem in the form stated in
\cite[Theorem~1.1]{INS2026}. That paper also explains that this statement
is a special case of \cite[Theorem~4.7 and Remark~4.8]{Nieraeth2023}.

\begin{theorem}\label{thm 20260702-2}
Let $X$ be a ball Banach function space such that $\M$ is bounded on
both $X$ and $X'$. Fix $p_0\in(1,\infty)$, and let
$N:[1,\infty)\to(0,\infty)$ be increasing. Let $\mathcal F$ be a
collection of pairs $(f,g)$ of nonnegative measurable functions. Assume
that, for every $w\in A_{p_0}$ and every $(f,g)\in\mathcal F$ with
$g\in L^{p_0}(w)$, one has $f\in L^{p_0}(w)$ and
\[
 \norm[L^{p_0}(w)]{f}
 \leq N([w]_{A_{p_0}})\norm[L^{p_0}(w)]{g}.
\]
Then there is a constant $C=C(X,p_0,N)$ such that, for every
$(f,g)\in\mathcal F$ with $g\in X$, one has $f\in X$ and
\[
 \norm[X]{f}\leq C\norm[X]{g}.
\]
The constant $C$ is independent of $\mathcal F$ and of the particular
pair $(f,g)$.
\end{theorem}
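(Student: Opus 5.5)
The statement immediately above is Theorem~\ref{thm 20260702-2}, the extrapolation theorem quoted from \cite[Theorem~1.1]{INS2026}. I would prove it by the Rubio de Francia algorithm run in both $X$ and $X'$, with $X''=X$ (Lemma~\ref{lem 20260812-1-noi}) used to recover the $X$-norm by duality.

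\emph{Setup.} Put $K_1=\norm[X\to X]{\M}$ and $K_2=\norm[X'\to X']{\M}$, both finite by hypothesis. For nonnegative $h\in X$ and $k\in X'$ define
\[
 Rh=\sum_{m\ge0}\frac{\M^m h}{(2K_1)^m},\qquad R'k=\sum_{m\ge0}\frac{\M^m k}{(2K_2)^m}.
\]
These series converge in $X$ and $X'$ because both spaces are complete. The standard properties follow:
\begin{itemize}
 \item $h\le Rh$ and $\norm[X]{Rh}\le 2\norm[X]{h}$;
 \item $\M(Rh)\le 2K_1 Rh$, so $Rh\in A_1$ with $[Rh]_{A_1}\le 2K_1$;
 \item the analogous statements hold for $R'$ with $K_2$.
\end{itemize}

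\emph{The weight.} Fix $(f,g)\in\mathcal F$ with $g\in X$ and $g\neq0$. Since $X=X''$ with equal norms, it suffices to bound $\int fk$ for nonnegative $k\in X'$ with $\norm[X']{k}\le1$. Set
\[
 W=Rg\,\norm[X]{g}^{-1}+Rk_*,\qquad k_*=\text{suitable},
\]
and, more cleanly, follow the Cruz-Uribe--Martell--P\'erez scheme:
\[
 H_1=R\bigl(g/\norm[X]{g}\bigr),\qquad H_2=R'k,\qquad w=H_1^{1-p_0}H_2 .
\]
By reverse factorization, $w\in A_{p_0}$ with $[w]_{A_{p_0}}\le[H_1]_{A_1}^{p_0-1}[H_2]_{A_1}\le(2K_1)^{p_0-1}(2K_2)$. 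This quantity is independent of $(f,g)$ and $k$.

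\emph{Finiteness of $\norm[L^{p_0}(w)]{g}$.} Since $g\le\norm[X]{g}H_1$,
\[
 \int g^{p_0}w\le\norm[X]{g}^{p_0}\int H_1 H_2\le\norm[X]{g}^{p_0}\norm[X]{H_1}\norm[X']{H_2}\le 4\norm[X]{g}^{p_0}.
\]
This uses \eqref{eq 20260702-3}. So $g\in L^{p_0}(w)$, and the hypothesis gives $f\in L^{p_0}(w)$ with $\norm[L^{p_0}(w)]{f}\le N(c)\norm[L^{p_0}(w)]{g}$, where $c$ is the constant above.

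\emph{Duality estimate.} By H\"older's inequality with exponents $p_0,p_0'$,
\[
 \int fk\le\int fH_2=\int f\,H_1^{-(p_0-1)/p_0}H_2^{1/p_0}\cdot H_1^{(p_0-1)/p_0}H_2^{1/p_0'}\le\norm[L^{p_0}(w)]{f}\Bigl(\int H_1H_2\Bigr)^{1/p_0'} .
\]
Combining with the bound $\int H_1H_2\le4$ gives $\int fk\le 4N(c)\norm[X]{g}$. Taking the supremum over $k$ yields $\norm[X']{\cdot}$-duality: $f\in X''=X$ with $\norm[X]{f}\le 4N(c)\norm[X]{g}$. Note that $N$ is increasing, so any bound on $[w]_{A_{p_0}}$ suffices.

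\emph{Main obstacle.} The delicate points are the degenerate sets where $H_1$ or $H_2$ vanishes or is infinite, because $w$ must be a genuine weight with $0<w<\infty$ almost everywhere. To handle this, I would replace $g$ by $g+\epsilon\one_B$-type positive perturbations, or use $H_1\ge\M(g/\norm{g})>0$ wherever $g\not\equiv0$. I would take $k$ strictly positive by adding $\epsilon\,\varphi$ with $\varphi\in X'$ positive; such a $\varphi$ exists since $X'$ is saturated. Finiteness almost everywhere follows from $H_i\in L^1_{\mathrm{loc}}$ (Lemma~\ref{lem 20260702-2}). Finally, let $\epsilon\to0$ using monotone convergence.
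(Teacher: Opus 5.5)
Your argument is correct, but it is not how the paper treats this statement. The paper gives no proof of Theorem~\ref{thm 20260702-2}. It cites \cite[Theorem~1.1]{INS2026} as a special case of \cite[Theorem~4.7 and Remark~4.8]{Nieraeth2023}, translates the multiplier-weight convention via $v=w^{1/p_0}$ and $\phi(t)=N(t^{p_0})$, and relies on Remark~4.8 for the uniformity of the constant in the pair $(f,g)$.

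What you wrote is the direct Rubio de Francia duality proof. It is, verbatim with $\rho=\infty$, the argument the paper gives for the local version, Proposition~\ref{prop 20260812-2-noi}. You iterate $\M$ in $X$ and $X'$ and form $w=H_1^{1-p_0}H_2$ by reverse factorization. You then bound $\int H_1H_2\le 4$ by the generalized H\"older inequality and recover $\norm[X]{f}$ through $X''=X$.

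The self-contained route gives an explicit constant, $C=4N\bigl((2K_1)^{p_0-1}\,2K_2\bigr)$. It also makes independence from $\mathcal F$ and from the pair automatic, because your bound on $[w]_{A_{p_0}}$ depends only on $K_1$, $K_2$ and $p_0$. The citation instead places the result inside Nieraeth's more general extrapolation theorem, at the cost of translating conventions.

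One real simplification on your side: $\M g>0$ everywhere as soon as $g\not\equiv0$, and likewise $\M k>0$ for $k\not\equiv0$. So the perturbations $\varepsilon u_0$ and $\varepsilon v_0$ used in the paper's local proof are unnecessary for the global operator. They are genuinely needed there, because $\Mloc_\rho g$ vanishes far from the support of $g$.

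A few things to tidy up:
\begin{itemize}
\item Delete the abandoned line defining $W$ and $k_*$.
\item Treat $g=0$ a.e.\ separately: the hypothesis with $w\equiv1$ forces $f=0$ a.e.
\item Treat $k\equiv0$ separately; that case is trivial.
\item Justify local integrability of $w$ itself. It does not follow from $H_i\in L^1_{\mathrm{loc}}$ alone. It follows from $\avg{w}_Q\le(2K_1)^{p_0-1}\avg{H_1}_Q^{1-p_0}\avg{H_2}_Q<\infty$, which uses $\M H_1\le 2K_1H_1$ and $\avg{H_1}_Q>0$.
\end{itemize}
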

The weighted
spaces in \cite{Nieraeth2023} are written in the multiplier-weight convention
$L_v^{p_0}=\{H:vH\in L^{p_0}\}$. Taking $v=w^{1/p_0}$ gives
\[
 \|H\|_{L_v^{p_0}}=\|H\|_{L^{p_0}(w)},
 \qquad
 [v]_{p_0}=[w]_{A_{p_0}}^{1/p_0}.
\]
Accordingly, \cite[Theorem~4.7]{Nieraeth2023}, applied with
$\phi(t)=N(t^{p_0})$, yields the conclusion of
Theorem~\ref{thm 20260702-2} for each pair
$(f,g)\in\mathcal F$. Remark~4.8 in \cite{Nieraeth2023}
shows that the constant may be chosen independently of the particular
pair. Hence a specific constant works uniformly for all
$(f,g)\in\mathcal F$ with $g\in X$.

Let $Z$ be a ball quasi-Banach function space and let
$0<q<\infty$. Given a constant $C>0$, we say that the finite
vector-valued estimate holds on $Z$ with constant $C$ if, for every
$N\in\N$,
\begin{equation}\label{eq 20260702-7}
 \left\|\left(\sum_{j=1}^N(\M f_j)^q\right)^{\frac1q}\right\|_Z
 \leq C
 \left\|\left(\sum_{j=1}^N|f_j|^q\right)^{\frac1q}\right\|_Z
\end{equation}
whenever the quantity on the right is finite. The constant $C$ is
independent of $N$ and of the family. Here $\M f_j$ is the
extended-valued function in \eqref{eq 20260702-4}. Whenever
\eqref{eq 20260702-7} holds with a finite constant $C$, taking $N=1$
gives
\begin{equation}\label{eq 20260702-8}
 \norm[Z]{\M f}\leq C\norm[Z]{f}
 \qquad(f\in Z).
\end{equation}
Hence $\M f$ is finite almost everywhere for every $f\in Z$. If
$\displaystyle\|f\|_{L^1(Q)}=\infty$ for some cube $Q$, then
$\M f=\infty$ on $Q$, which is impossible. Therefore every element of
$Z$ is locally integrable whenever \eqref{eq 20260702-7} holds with a
finite constant independent of $N$.

\begin{lemma}\label{lem 20260702-3}
Let $X$ be a ball quasi-Banach function space,
and let $0<q<\infty$. Assume that \eqref{eq 20260702-7}, with
constant $C$, holds for every $N\in\N$. Then
$X\subset L^1_{\mathrm{loc}}(\R^n)$ and
\begin{equation}\label{eq 20260812-29-noi-global}
 \left\|
 \left(\sum_{j=1}^{\infty}({{\M}} f_j)^q\right)^{1/q}
 \right\|_X
 \leq C
 \left\|
 \left(\sum_{j=1}^{\infty}|f_j|^q\right)^{1/q}
 \right\|_X
\end{equation}
for every sequence for which the quantity on the right belongs to $X$.
\end{lemma}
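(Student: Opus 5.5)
The local integrability is already settled by the remark preceding the lemma. Taking $N=1$ in \eqref{eq 20260702-7} gives \eqref{eq 20260702-8}. So for $f\in X$ the function $\M f$ lies in $X$. Elements of a quasi-Banach function space are finite almost everywhere (this follows from the lattice property and completeness), so $\M f<\infty$ almost everywhere. If $\int_Q|f|=\infty$ for some cube $Q$, then $\M f\equiv\infty$ on $Q$, a set of positive measure, which is a contradiction. Hence $X\subset L^1_{\mathrm{loc}}(\R^n)$.

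For the infinite-sequence estimate I plan a monotone limit argument using only the lattice and Fatou properties of Definition~\ref{def 20260702-2}. Fix $\{f_j\}_{j=1}^\infty$ with
\[
F:=\left(\sum_{j=1}^\infty|f_j|^q\right)^{1/q}\in X .
\]
For each $N$ put $F_N=(\sum_{j=1}^N|f_j|^q)^{1/q}$. Since $F_N\le F$, the lattice property gives $F_N\in X$ and $\norm[X]{F_N}\le\norm[X]{F}$, so the right-hand side of \eqref{eq 20260702-7} is finite. That estimate then yields
\[
G_N:=\left(\sum_{j=1}^N(\M f_j)^q\right)^{1/q}\in X,
\qquad
\norm[X]{G_N}\le C\norm[X]{F_N}\le C\norm[X]{F}.
\]

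Next, $0\le G_N\uparrow G:=(\sum_{j=1}^\infty(\M f_j)^q)^{1/q}$ pointwise, with the limit a priori extended-valued. Each $G_N$ is finite almost everywhere because it belongs to $X$. The Fatou property, Definition~\ref{def 20260702-2}$(ii)$, applies since $\sup_N\norm[X]{G_N}\le C\norm[X]{F}<\infty$. It gives $G\in X$, so in particular $G<\infty$ almost everywhere, and $\norm[X]{G}=\sup_N\norm[X]{G_N}\le C\norm[X]{F}$, which is \eqref{eq 20260812-29-noi-global}.

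The only delicate point is making sure the Fatou property is used in the stated form, that is, for an increasing sequence converging almost everywhere to a possibly extended-valued limit. The uniform bound guarantees the limit lies in $X$ and is therefore finite almost everywhere, so no separate finiteness argument for $G$ is needed. Measurability of each $\M f_j$ is clear, since $\M f_j$ is lower semicontinuous as a supremum over open cubes, or equals the supremum over rational cubes. Beyond this the argument is routine, and the constant $C$ is preserved exactly.
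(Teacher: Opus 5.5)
Your proposal is correct and follows the paper's proof essentially step by step. The $N=1$ case forces $\M f<\infty$ a.e., which rules out $\int_Q|f|=\infty$, and the infinite-sequence bound comes from $F_N\le F$, the finite estimate, and the Fatou property applied to $G_N\uparrow G$. One small correction: the a.e.\ finiteness of elements of $X$ follows from the lattice property and the definiteness of the quasi-norm, not from completeness. Indeed, if $|f|=\infty$ on $E$, then $k\|\one_E\|_X\le\|f\|_X$ for all $k$, so $\|\one_E\|_X=0$ and hence $|E|=0$.
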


\begin{proof}
If $\displaystyle\|f\|_{L^1(Q)}=\infty$ for some cube $Q$, then
\begin{equation}\label{eq:260813-1-Sawano}
\M f=\infty
\end{equation} 
on $Q$, which is impossible. Hence
$\displaystyle\|f\|_{L^1(Q)}<\infty$ for every such cube. Since every bounded set $E$ is
contained in a finite union of these cubes, say
$E\subset\bigcup\limits_{\nu=1}^m Q_\nu$, one has
\[
 \int_E|f(x)|\,dx
 \leq\sum_{\nu=1}^m\int_{Q_\nu}|f(x)|\,dx<\infty.
\]
Thus $X\subset L^1_{\mathrm{loc}}(\R^n)$.

Next, we justify the passage from 
inequality \eqref{eq 20260702-7} for finitely many functions to 
inequality \eqref{eq 20260812-29-noi-global}
for infinite sequences.
Since $|f_j|\leq F$, Definition~\ref{def 20260702-2}$(i)$ gives $f_j\in {{X}}$, and
the assumed local embedding shows that each $f_j$ is locally integrable.
For $N\geq1$, put
\[
 F_N=\left(\sum_{j=1}^{N}|f_j|^q\right)^{\frac1q},
 \qquad
 G_N=\left(\sum_{j=1}^{N}(\M f_j)^q\right)^{\frac1q}.
\]
Also set
\[
 F=\left(\sum_{j=1}^{\infty}|f_j|^q\right)^{\frac1q},
 \qquad
 G=\left(\sum_{j=1}^{\infty}(\M f_j)^q\right)^{\frac1q}.
\]
Then $F_N\leq F$, and therefore
\[
 \norm[X]{G_N}\leq C\norm[X]{F_N}
 \leq C\norm[X]{F}.
\]
Since $G_N\uparrow G$ pointwise, Definition~\ref{def 20260702-2}$(ii)$ gives
$G\in X$ and
\[
 \norm[X]{G}=\sup\limits_{N \in {\mathbb N}}\norm[X]{G_N}\leq C\norm[X]{F}.
\]
\end{proof}

We follow \cite[p. 75, 5.2]{Stein1993} to prove the following proposition.
\begin{proposition}\label{prop 20260702-1}
Let $Y$ be a ball quasi-Banach function space on $\R^n$, and let
$0<q\leq1$. There is no constant $C$ such that
\begin{equation}\label{eq 20260702-9}
 \left\|\left(\sum_{j=1}^N(\M f_j)^q\right)^{\frac1q}\right\|_Y
 \leq C
 \left\|\left(\sum_{j=1}^N|f_j|^q\right)^{\frac1q}\right\|_Y
\end{equation}
for every $N\in\N$ and all measurable functions for which the quantity
on the right is finite.
\end{proposition}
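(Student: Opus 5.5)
The plan is to adapt Stein's classical counterexample \cite[p.~75, 5.2]{Stein1993}. We take a single cube, chop it into many small disjoint subcubes, and let $f_j$ be the indicator functions of these subcubes. Because the supports are disjoint, the right-hand side of \eqref{eq 20260702-9} does not depend on $N$. On the left, the sum of the maximal functions grows like $\log N$ on the big cube. This rules out any finite $C$ uniform in $N$.

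\textbf{Construction.} Let $Q_0=[0,1)^n$. For $m\in\N$, partition $Q_0$ into the $N=m^n$ dyadic-type subcubes $Q_1,\dots,Q_N$ of side length $1/m$, with centers $c_j$. Put $f_j=\one_{Q_j}$.

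\textbf{The right-hand side.} The supports are pairwise disjoint, so for every $0<q\le1$
\[
\left(\sum_{j=1}^N|f_j|^q\right)^{1/q}=\one_{Q_0}.
\]
Hence the right-hand side of \eqref{eq 20260702-9} equals $C\norm[Y]{\one_{Q_0}}$. This is finite, since $Q_0$ lies in a ball and Definition~\ref{def 20260702-2}$(i)$,$(iii)$ apply. It is also positive, since $\one_{Q_0}\neq0$ in $Y$.

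\textbf{Reduction of the left-hand side to $q=1$.} For $0<q\le1$ we have the embedding $\ell^q\subset\ell^1$ with norm one, that is, $\sum_j a_j\le(\sum_j a_j^q)^{1/q}$ for $a_j\ge0$. So
\[
\left(\sum_{j=1}^N(\M f_j)^q\right)^{1/q}\ge\sum_{j=1}^N\M f_j,
\]
and it suffices to bound $\sum_j\M f_j$ from below.

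\textbf{Pointwise lower bound.} Fix $x\in Q_0$ and $j$, and set $d_j=|x-c_j|_\infty$. The axis-parallel cube centered at $c_j$ with side $2d_j+1/m$ contains both $x$ and $Q_j$. Therefore
\[
\M f_j(x)\ge\frac{m^{-n}}{(2d_j+1/m)^n}\ge\frac{c_n}{(1+m\,d_j)^n}.
\]
Now group the indices $j$ according to the value $k\approx m\,d_j$. For every $x\in Q_0$ and every $k\le m/2$, there are at least $c_n' k^{n-1}$ indices $j$ with $k\le m\,d_j<k+1$; indeed, at least one coordinate direction leaves room of length $\ge1/2$ inside $Q_0$. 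This shell counting is the only step needing care, and it is a routine lattice-point count. Summing over the shells gives
\[
\sum_{j=1}^N\M f_j(x)\ \ge\ c_n''\sum_{k=1}^{\lfloor m/2\rfloor}\frac{k^{n-1}}{(k+2)^n}\ \ge\ c\log m
\qquad(x\in Q_0).
\]

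\textbf{Conclusion.} By the lattice property, Definition~\ref{def 20260702-2}$(i)$, the left-hand side of \eqref{eq 20260702-9} is at least $c\log m\,\norm[Y]{\one_{Q_0}}$. Inequality \eqref{eq 20260702-9} would then give
\[
c\log m\,\norm[Y]{\one_{Q_0}}\le C\norm[Y]{\one_{Q_0}}.
\]
Dividing by $\norm[Y]{\one_{Q_0}}\in(0,\infty)$ yields $c\log m\le C$ for all $m$, which is impossible as $m\to\infty$.
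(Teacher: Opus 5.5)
Your proposal is correct and follows essentially the same route as the paper: Stein's partition of $[0,1)^n$ into $m^n$ subcubes, the bound $\M f_j(x)\gtrsim(1+m\,d_j)^{-n}$, a shell count of order $k^{n-1}$, and a harmonic sum of order $\log m$. You make two harmless simplifications: you reduce $0<q<1$ to $q=1$ via $\sum_j a_j\le(\sum_j a_j^q)^{1/q}$ instead of proving the paper's stronger bound $c_{n,q}L^{n(1/q-1)}$, and you count one-sided shells at every $x\in Q_0$ instead of full shells at $x\in(3/8,5/8)^n$, so the same norm $\|\one_{Q_0}\|_Y$ appears on both sides.

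The one inaccuracy is the range of your shell count. For $m$ even and $x$ the center of $Q_0$, one has $m\,d_j\le m/2-1/2$ for all $j$, so the shell $k=m/2$ is empty. Restricting to $1\le k\le m/2-1$, where your orthant argument gives at least $k^{n-1}$ indices in each shell, leaves the $c\log m$ bound intact.
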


\begin{proof}
Suppose, to the contrary, that \eqref{eq 20260702-9} holds with a constant
$C$ independent of $N$. Choose $L=8M$ with $M\in\N$, and set
\[
 Q=[0,1)^n.
\]
The $L^n$ half-open cubes
\[
 Q_\nu=\prod_{i=1}^n
 \left[\frac{\nu_i-1}{L},\frac{\nu_i}{L}\right),
 \qquad
 \nu=(\nu_1,\ldots,\nu_n)\in\{1,\ldots,L\}^n,
\]
are pairwise disjoint and their union is exactly $Q$. Put
$f_\nu=\one_{Q_\nu}$. Thus we have a family of $N=L^n$ functions, and
\begin{equation}\label{eq 20260702-10}
 \left(\sum_{{{\nu \in \{1,2,\ldots,L\}^n}}}|f_\nu|^q\right)^{\frac1q}=\one_Q
 \quad\text{pointwise on }\R^n.
\end{equation}

Let $E=(3/8,5/8)^n$. For each $x\in E$, define
\[
 \kappa_i=\lfloor Lx_i\rfloor+1
 \qquad(1\leq i\leq n),
 \qquad
 \kappa=(\kappa_1,\ldots,\kappa_{n}).
\]
Since $0<x_i<1$, one has $\kappa_i\in\{1,\ldots,L\}$ and
\[
 \frac{\kappa_i-1}{L}\leq x_i<\frac{\kappa_i}{L}.
\]
Hence $x\in Q_\kappa$. 
There is only one index $\kappa$
such that $x \in Q_\kappa$
because the cubes
$Q_\nu$ form a disjoint partition of $Q$. Moreover,
\[
 3M<Lx_i<5M,
\]
and therefore
\begin{equation}\label{eq:260803-12}
 3M+1\leq\kappa_i\leq5M.
\end{equation}
For each $\nu=(\nu_1,\nu_2,\ldots,\nu_n)\in\{1,\ldots,L\}^n$, 
define the lattice distance by
\[
 d(\nu,\kappa)=\max\limits_{1\leq i\leq n}|\nu_i-\kappa_i|.
\]
We now construct explicitly a cube containing both $x$ and
$Q_\nu$. For the $i$-th coordinate, write
\[
 I_{\nu_i}
 =\left[\frac{\nu_i-1}{L},\frac{\nu_i}{L}\right),
 \qquad
 x_i\in I_{\kappa_i},
\]
and let
\[
 a_i=\min\left\{x_i,\frac{\nu_i-1}{L}\right\},
 \qquad
 b_i=\max\left\{x_i,\frac{\nu_i}{L}\right\}.
\]
Then both the point $x_i$ and the interval $I_{\nu_i}$ are contained in
$[a_i,b_i]$. Moreover,
\[
 b_i-a_i=
 \begin{cases}
 \dfrac{\nu_i}{L}-x_i,
 &\nu_i>\kappa_i (\iff \nu_i \ge \kappa_i+1),\\[6pt]
 \dfrac1L,
 &\nu_i=\kappa_i,\\[6pt]
 x_i-\dfrac{\nu_i-1}{L},
 &\nu_i<\kappa_i (\iff \nu_i \le \kappa_i-1).
 \end{cases}
\]
Recall that $\kappa_i=\lfloor L x_i\rfloor+1$.
Therefore,
$$
\frac{\kappa_i-1}{L}
\leq x_i<
\frac{\kappa_i}{L}.
$$ 
Each of these three cases gives
\begin{equation}\label{eq:260803-11}
 b_i-a_i
 \leq\frac{|\nu_i-\kappa_i|+1}{L}
 \leq\frac{d(\nu,\kappa)+1}{L}.
\end{equation}
Set
\[
 \delta=\frac{d(\nu,\kappa)+2}{L}
 \quad\text{and}\quad
 c_i=a_i-\frac{\delta-(b_i-a_i)}{2}=\frac{a_i+b_i-\delta}{2}.
\]
Estimate \eqref{eq:260803-11} implies $b_i-a_i<\delta$ and $c_i<a_i$, and hence
\[
 [a_i,b_i]\subset(c_i,c_i+\delta).
\]
Therefore the axis-parallel cube
\[
 R_{\nu,x}=\prod_{i=1}^n[c_i,c_i+\delta)
\]
contains $x$ and the whole cube $Q_\nu$, and its side length is
\[
 \ell(R_{\nu,x})=\delta
 =\frac{d(\nu,\kappa)+2}{L}.
\]
Using this cube in the definition of $\M$, we obtain
\begin{align}
 \M f_\nu(x)
 &\geq \frac1{|R_{\nu,x}|}\int_{R_{\nu,x}}\one_{Q_\nu}(y)\,dy
 =\frac{|Q_\nu|}{|R_{\nu,x}|} \notag\\
 &\geq \frac{L^{-n}}
 {\bigl[(d(\nu,\kappa)+2)/L\bigr]^n}
 =\bigl(d(\nu,\kappa)+2\bigr)^{-n}.
 \label{eq 20260702-11}
\end{align}

Let $s\in\N$.
We now count the indices
$\nu$
 at a fixed lattice distance $s$ from $\kappa$. 
Set
\[
 \Gamma_s(\kappa)
 =\{\nu\in\Z^n:\ d(\nu,\kappa)=s\}.
\]
If $1\leq s\leq M=L/8$, then
since
\[
 2M+1\leq\kappa_i-s\leq\nu_i\leq\kappa_i+s\leq6M<L
\] due to \eqref{eq:260803-12}, every $\nu\in\Gamma_s(\kappa)$ belongs to
$\{1,\ldots,L\}^n$. 
The number of integer vectors satisfying $d(\nu,\kappa)\leq s$ is
$(2s+1)^n$, because each coordinate can take one of the
$2s+1$ values
\[
 \kappa_i-s,\ \kappa_i-s+1,\ \ldots,\ \kappa_i+s.
\]
Similarly, the number satisfying $d(\nu,\kappa)\leq s-1$ is
$(2s-1)^n$. Therefore
\[
 \#\Gamma_s(\kappa)
 =(2s+1)^n-(2s-1)^n.
\]
The subtraction removes the vectors with $d(\nu,\kappa)\leq s-1$
from those with $d(\nu,\kappa)\leq s$, so the remaining vectors are
exactly the elements of $\Gamma_s(\kappa)$. Moreover,
\[
 (2s+1)^n-(2s-1)^n
 =\int_{2s-1}^{2s+1}nt^{n-1}\,dt
 \geq2n(2s-1)^{n-1}
 \geq2n s^{n-1}.
\]
Thus 
\begin{equation*}
 \#\Gamma_s(\kappa)\geq 2ns^{n-1}
 \qquad(1\leq s\leq M).
\end{equation*}

Remark that $s+2\leq3s$ for $s \in {\mathbb N}$.
If $q=1$, then \eqref{eq 20260702-11} and the preceding count give
\[
 \sum_{{\nu \in \{1,2,\ldots,L\}^n}}
 \M f_\nu(x)
 \geq\sum_{s=1}^{M}\sum_{\nu\in\Gamma_s(\kappa)}
 (s+2)^{-n}
 \geq \sum_{s=1}^{M}\frac{2n s^{n-1}}{(s+2)^n} 
 \geq \sum_{s=1}^{M}\frac{2n}{3^n s}.
 \]
Furthermore, using
$L=8M$, we have
 \[
 \sum_{s=1}^{M}\frac{2n}{3^n s} \ge \frac{2n}{3^n}\log (M+1) = \frac{2n}{4\cdot 3^n}\log (M+1)^4 \ge 
 \frac{2n}{4\cdot 3^n}\log 8M = \frac{2n}{4\cdot 3^n}\log L.
 \]
 Hence
 \[
 \sum_{{\nu \in \{1,2,\ldots,L\}^n}}
 \M f_\nu(x)
 \ge\frac{2n}{4\cdot 3^n}\log L.
 \]

Let $0<q<1$. Again using \eqref{eq 20260702-11}, we have
\[
 \sum_{\nu \in \{1,2,\ldots,L\}^n}
 (\M f_\nu(x))^q
 \geq\sum_{s=1}^{M}
 \frac{\sharp \Gamma_s(\kappa)}{(s+2)^{nq}} 
 \geq \sum_{s=1}^{M}\frac{2n s^{n-1}}{(s+2)^{nq}} 
 \geq \sum_{s=1}^{M}\frac{2n}{3^{n q}}s^{n(1-q)-1}.
 \]
 Hence 
 \[
 \sum_{\nu \in \{1,2,\ldots,L\}^n}
 (\M f_\nu(x))^q
 \geq \sum_{s=1}^{M}\frac{2n}{3^{n q}}s^{n(1-q)-1} 
 \geq c_{n,q}L^{n(1-q)}.
 \]
 The last inequality is the elementary power-sum estimate; for example, it
follows by summing only over the integers $s$ with $M/2\leq s\leq M$.

Consequently, for every $x\in E$,
\[
 \left( \sum_{\nu \in \{1,2,\ldots,L\}^n} (\M f_\nu(x))^q\right)^{\frac1q}
 \geq
 \begin{cases}
 c_n\log L,&q=1,\\
 c_{n,q}L^{n(1/q-1)},&0<q<1.
 \end{cases}
\]
The sets $E$ and $Q$ are bounded, so each is contained in a ball.
Definition~\ref{def 20260702-2}$(i)$,$(iii)$ therefore gives
$\one_E,\one_Q\in Y$. Hence \eqref{eq 20260702-10} and the lattice
property give
\[
 \left\|\left( \sum_{\nu \in \{1,2,\ldots,L\}^n}(\M f_\nu)^q\right)^{\frac1q}\right\|_Y
 \geq
 \begin{cases}
 c_n(\log L)\norm[Y]{\one_E},&q=1,\\
 c_{n,q}L^{n(1/q-1)}\norm[Y]{\one_E},&0<q<1,
 \end{cases}
\]
whereas the right-hand side of \eqref{eq 20260702-9}, for this family, is
$C\norm[Y]{\one_Q}$. The latter is independent of $L$, while either lower
bound tends to infinity as $L\to\infty$. This contradiction proves the
proposition.
\end{proof}

\begin{proposition}\label{prop 20260702-2}
Let $Y$ be a ball quasi-Banach function space. The following statements
are equivalent. The constant $C$ may be chosen to be the same in
$(i)$--$(iii)$.
\begin{enumerate}
 \item[$(i)$] There is a constant $C>0$ such that
 \[
 \norm[Y]{\M f}\leq C\norm[Y]{f}
 \qquad(f\in Y).
 \]
 \item[$(ii)$] There is a constant $C>0$ such that, for every $N\in\N$ and
 every finite family $\{f_j\}_{j=1}^N$ satisfying
 $\max\limits_{1\leq j\leq N}|f_j|\in Y$,
 \[
 \left\|\max\limits_{1\leq j\leq N}\M f_j\right\|_Y
 \leq C\left\|\max\limits_{1\leq j\leq N}|f_j|\right\|_Y.
 \]
 \item[$(iii)$] There is a constant $C>0$ such that
 \[
 \left\|\sup\limits_{j \in{\mathbb N}}\M f_j\right\|_Y
 \leq C\left\|\sup\limits_{j \in{\mathbb N}}|f_j|\right\|_Y
 \]
 whenever $\sup\limits_{j \in{\mathbb N}}|f_j|\in Y$.
\end{enumerate}
Moreover, even when $Y$ is a ball Banach function space, these
equivalent conditions need not imply that $\M$ is bounded on $Y'$.
\end{proposition}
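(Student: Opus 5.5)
The three equivalences come from one observation: for nonnegative functions, the maximal function of a pointwise supremum dominates the supremum of the maximal functions. The counterexample is a separate matter. I would prove $(i)\Rightarrow(iii)\Rightarrow(ii)\Rightarrow(i)$, keeping the same constant $C$ throughout.

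For $(i)\Rightarrow(iii)$, let $F=\sup_j|f_j|\in Y$. For every cube $Q$ and every $j$, monotonicity of the average gives $\avg{|f_j|}_Q\le\avg{F}_Q$, so $\M f_j\le\M F$ pointwise (as extended functions) and hence $\sup_j\M f_j\le\M F$. The measurability of $\sup_j\M f_j$ is clear, since it is a countable supremum. The lattice property in Definition~\ref{def 20260702-2}$(i)$, applied to $\M F\in Y$, then gives
\[
\Bigl\|\sup_j\M f_j\Bigr\|_Y\le\norm[Y]{\M F}\le C\norm[Y]{F}.
\]
The implication $(iii)\Rightarrow(ii)$ follows by taking $f_j=0$ for $j>N$, noting $\M0=0$. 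The implication $(ii)\Rightarrow(i)$ follows by taking $N=1$. No local integrability issue arises, because everything is read as extended-valued functions: if $\M F\notin Y$, the inequality in $(i)$ fails, so finiteness is part of the hypothesis.

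For the final sentence I need a ball Banach function space $Y$ with $\M$ bounded on $Y$ but not on $Y'$. My first choice is $Y=L^\infty(\R^n)$. It is a ball Banach function space: it is complete, has the lattice property, has the Fatou property with equality of norms via the $\sup$ of the $\operatorname{ess\,sup}$, contains $\one_B$, and satisfies $\int_B|f|\le|B|\norm[L^\infty]{f}$. Moreover $\norm[L^\infty]{\M f}\le\norm[L^\infty]{f}$ trivially. Its Köthe associate space is $Y'=L^1(\R^n)$, which is immediate from \eqref{eq 20260702-2} by testing with $f=\operatorname{sgn}\bar g$ truncations. Then $\M$ is unbounded on $L^1$, because for $f=\one_{[0,1)^n}$ one has $\M f(x)\gtrsim(1+|x|)^{-n}\notin L^1$. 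The lower bound comes from the cube of side about $|x|_\infty+1$ containing both $x$ and $[0,1)^n$, the same construction as $R_{\nu,x}$ in Proposition~\ref{prop 20260702-1}.

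The main point to check is that this example also fits the intended use in Theorem~\ref{thm 20260701-1}$(iv)$, where $X$ is a ball Banach function space, so $L^\infty$ suffices there as well. If one prefers an example with $1<p$-type behaviour, one can instead take a weighted space $L^p(w)$ with $w\in A_p$ and use that $(L^p(w))'=L^{p'}(w^{1-p'})$ with $w^{1-p'}\in A_{p'}$; this, however, would show boundedness on both spaces, so $L^\infty$ is the right choice. The only mildly delicate step is verifying the Fatou property and the identification $Y'=L^1$, both of which are standard.
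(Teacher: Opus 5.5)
Your proof is correct and follows the paper's argument. The pointwise bound $\sup_j\M f_j\le\M(\sup_j|f_j|)$ together with the lattice property passes from the scalar estimate to the $\ell^\infty$-valued estimates with the same constant, the converses come from families with one nonzero term, and $Y=L^\infty(\R^n)$ with $Y'=L^1(\R^n)$ is the same counterexample the paper uses. The differences are cosmetic: you order the implications cyclically, you prove unboundedness on $L^1$ directly via $\M\one_{[0,1)^n}(x)\gtrsim(1+|x|)^{-n}$ where the paper cites Stein, and the $L^p(w)$ digression can simply be dropped.
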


\begin{proof}
For any finite or countable sequence,
\[
 \sup\limits_j\M f_j
 \leq \M\left(\sup\limits_j|f_j|\right)
\]
pointwise in $[0,\infty]$. Thus the scalar estimate in $(i)$, with
constant $C$, implies both $(ii)$ and $(iii)$ with the same constant.
Conversely, applying either $(ii)$ or $(iii)$ to a family with one nonzero
term gives $(i)$, again with the same constant. 

For the final assertion, take $Y=L^\infty(\R^n)$. Then $\M$ is
bounded on $Y$, whereas $Y'=L^1(\R^n)$ and $\M$ is not bounded on
$L^1(\R^n)$; see, for example, \cite[Chapter~I]{Stein1970}.

\end{proof}

\subsection{Dyadic grids and sparse families}
\label{subsection:Dyadic grids and sparse families}
A half-open cube is a set of the form
\[
 Q=\prod_{i=1}^n[a_i,a_i+\ell),
 \qquad a_i\in\R,\quad \ell>0.
\]
A collection $\D$ of half-open cubes is called a \emph{dyadic grid} if it
can be written as
\[
 \D=\bigcup\limits_{k\in\mathbb Z}\D_k
\]
and satisfies the following properties.
\begin{enumerate}
 \item[$(i)$] For every $k\in\mathbb Z$, $\D_k$ is a partition of
 $\R^n$ into half-open cubes of side length $2^{-k}$.
 \item[$(ii)$] Every $Q\in\D_k$ is contained in a unique cube
 $Q^{(1)}\in\D_{k-1}$ and is the disjoint union of exactly $2^n$
 cubes in $\D_{k+1}$.
 \item[$(iii)$] If $P,Q\in\D$, then $P\cap Q=\varnothing$, $P\subset Q$,
 or $Q\subset P$.
\end{enumerate}

A collection $\Sscr$ of cubes is called $\eta$-sparse, where
$0<\eta<1$, if there are measurable sets $E_Q\subset Q$,
$Q\in\Sscr$, such that
\begin{equation}\label{eq 20260702-14}
 |E_Q|\geq\eta|Q|
 \quad\text{and}\quad
 E_Q\cap E_P=\varnothing\quad(Q\neq P).
\end{equation}
If $\Sscr$ is contained in a dyadic grid, it is called a dyadic sparse
collection. For every finite collection $\Sscr$ of cubes and every
$0<q<\infty$, define
\begin{align}
 \T_{\Sscr,q}f
 &=\left(\sum_{Q\in\Sscr}\avg{|f|}_Q^q\one_Q\right)^{\frac1q},
 \label{eq 20260702-15}
 \end{align}
Every sparse collection of cubes in $\R^n$ is at most countable.
For an arbitrary sparse collection $\Sscr$, define
\[
 \A_{\Sscr}f(x)
 :=
 \sup\limits_{\substack{\Fscr\subset\Sscr\\ \Fscr\ {\rm finite}}}
 \sum_{Q\in\Fscr}\avg{|f|}_Q\one_Q(x)
 \in[0,\infty].
 \]
Since all the summands are nonnegative, this agrees with the
pointwise sum
\begin{equation}
 \label{eq 20260702-16}
 \A_{\Sscr}f(x)
 =
 \sum_{Q\in\Sscr}\avg{|f|}_Q\one_Q(x),
\end{equation}
independently of the enumeration of $\Sscr$.
Note that
the operator $\T_{\Sscr,q}$ will only be used below when $\Sscr$ is
finite.

For a dyadic grid $\D$ and a measurable function $f$, define the
dyadic maximal operator associated with $\D$ by
\[
 \M_{\D}f(x)
 :=
 \sup_{Q\in\D}\avg{|f|}_Q\one_Q(x)
 \in[0,\infty].
\]
If $\Fscr\subset\D$ is a nonempty finite subcollection, define the
corresponding finite dyadic maximal operator by
\[
 \M_{\Fscr}f(x)
 :=
 \max_{Q\in\Fscr}\avg{|f|}_Q\one_Q(x).
\]
We use the convention $\M_{\varnothing}f=0$.

Lemma \ref{lem 20260702-4} gives a finite sparse domination of the dyadic maximal
operator associated with a finite family of cubes.
Starting with the inclusion-maximal cubes in $\Fscr$, which are pairwise disjoint, we recursively select proper subcubes whose averages are more than twice the average of the previously selected cube.

At each stage,
we retain only those candidate subcubes that are not properly contained
in another candidate. This standard recursive selection produces a
finite $1/2$-sparse collection and controls every average occurring in
$\M_{\Fscr}f$; compare \cite{Lerner2013}. We include the proof because
Proposition~\ref{prop 20260704-1} below requires precisely this finite form
inside one fixed dyadic grid.

\begin{lemma}\label{lem 20260702-4}
Let $\D$ be a dyadic grid and let $\Fscr\subset\D$ be a finite
collection. For every locally integrable $f$, there is a
finite $1/2$-sparse collection $\Sscr\subset\D$ such that
\begin{equation}\label{eq 20260702-17}
 \M_{\Fscr}f\leq2\A_{\Sscr}f
 \qquad\text{almost everywhere}.
\end{equation}
\end{lemma}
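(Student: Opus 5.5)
We will prove Lemma~\ref{lem 20260702-4} by the standard stopping-time (Calder\'on--Zygmund) selection, carried out inside the finite collection $\Fscr$ so that the resulting sparse family stays finite and inside $\D$. If $\Fscr=\varnothing$ there is nothing to prove, so assume $\Fscr\neq\varnothing$. Since $f$ is locally integrable, every average $\avg{|f|}_Q$, $Q\in\Fscr$, is finite. We may also discard the cubes with $\avg{|f|}_Q=0$, because they contribute nothing to $\M_{\Fscr}f$.

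The selection runs in generations. Let $\Sscr_0$ be the inclusion-maximal cubes of $\Fscr$; by the nesting property of $\D$ they are pairwise disjoint. Given a selected cube $P$, define its children $\mathrm{ch}(P)$ as the inclusion-maximal cubes $Q\in\Fscr$ with $Q\subsetneq P$ and $\avg{|f|}_Q>2\avg{|f|}_P$. These children are pairwise disjoint. Put $\Sscr_{k+1}=\bigcup_{P\in\Sscr_k}\mathrm{ch}(P)$ and $\Sscr=\bigcup_k\Sscr_k$. Each generation consists of strictly smaller cubes from the finite set $\Fscr$, so the process stops after finitely many steps and $\Sscr\subset\Fscr\subset\D$ is finite.

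For sparseness, set $E_P=P\setminus\bigcup_{Q\in\mathrm{ch}(P)}Q$. Since the children are disjoint and each satisfies $|Q|<\avg{|f|}_Q^{-1}\int_Q|f|\le(2\avg{|f|}_P)^{-1}\int_Q|f|$, we get
\[
\sum_{Q\in\mathrm{ch}(P)}|Q|\le\frac{1}{2\avg{|f|}_P}\int_P|f|=\frac{|P|}{2}.
\]
Hence $|E_P|\ge|P|/2$. We also need the sets $E_P$ to be disjoint. If $P\neq P'$ are selected and intersect, then by nesting we may assume $P'\subsetneq P$. Following the ancestry of $P'$ back to the generation of $P$'s children, $P'$ lies in some child of $P$, so $E_P\cap P'=\varnothing$.

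For the domination, take $R\in\Fscr$ and $x\in R$. Among the selected cubes containing $R$ (and $R$ lies in some cube of $\Sscr_0$), choose the minimal one, $P$. We claim $\avg{|f|}_R\le2\avg{|f|}_P$. Otherwise $R\neq P$, and $R$ is a candidate child of $P$, so it lies inside some $Q\in\mathrm{ch}(P)$. That $Q$ is selected and contains $R$, contradicting the minimality of $P$. Therefore
\[
\avg{|f|}_R\one_R(x)\le2\avg{|f|}_P\one_P(x)\le2\A_{\Sscr}f(x),
\]
and taking the maximum over $R\in\Fscr$ gives \eqref{eq 20260702-17}. The inequality in fact holds everywhere, not just almost everywhere.

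The main point needing care is the disjointness of the $E_P$ across generations together with the minimal-ancestor argument. Both depend on the children of $P$ being exactly the maximal candidates, so that every candidate lies inside a selected child. Finiteness of $\Fscr$ ensures that maximal elements exist and that the process terminates.
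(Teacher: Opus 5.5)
Your proposal is correct and is essentially the paper's proof: you use the same stopping-time selection inside $\Fscr$ (maximal cubes first, then maximal cubes whose average exceeds twice the parent's), the same sets $E_P$, and the same ancestry argument for disjointness; your ``minimal selected cube containing $R$'' is the paper's terminal cube, and discarding zero-average cubes replaces the paper's separate treatment of $\avg{|f|}_Q=0$. One cosmetic slip: in the sparseness estimate the first relation is the identity $|Q|=\avg{|f|}_Q^{-1}\int_Q|f|$, and the strict inequality comes from $\avg{|f|}_Q>2\avg{|f|}_P$ in the second step.
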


\begin{proof}
If $\Fscr=\varnothing$, take $\Sscr=\varnothing$. Hence we may assume
that $\Fscr\neq\varnothing$.

Let $\mathcal C\subset\Fscr$ be a subcollection. A cube
$Q\in\mathcal C$ is called \emph{inclusion-maximal in $\mathcal C$} if
there is no cube $R\in\mathcal C$ such that $Q\subsetneq R$.

We construct $\Sscr$ recursively. First, select all cubes that are
inclusion-maximal in $\Fscr$. Suppose that $Q$ has already been selected
in the above procedure,
and consider the finite collection
\[
 \mathcal B(Q)
 =\left\{P\in\Fscr:\ P\subsetneq Q,
 \ \avg{|f|}_P>2\avg{|f|}_Q\right\}.
\]
Assume that $\mathcal B(Q) \ne \emptyset$.
Select all cubes that are inclusion-maximal in $\mathcal B(Q)$. We call
them the \emph{selected children} of $Q$. Here ``children'' refers only
to this recursive selection: these cubes need not be the $2^n$
immediate dyadic children of $Q$. Apply the same rule to every newly
selected cube. Each selected child is a proper subcube of its parent,
and $\Fscr$ is finite. Hence the procedure stops after finitely many
steps. Let $\Sscr$ be the collection of all selected cubes, and for
$Q\in\Sscr$ write $\operatorname{ch}_{\Sscr}(Q)$ for the selected
children of $Q$.
Notice that a cube $R$ belongs to $\mathscr S$ if and only if either
$R$ is inclusion-maximal in $\mathscr F$, or there exists a cube
$Q\in\mathscr S$ such that $R$ is inclusion-maximal in $\mathcal B(Q)$.

We first prove that $\Sscr$ is $1/2$-sparse. The cubes in
$\operatorname{ch}_{\Sscr}(Q)$ are pairwise disjoint due to maximality.
If $\avg{|f|}_Q>0$, the definition of the
children and their pairwise disjointness give
\[
 2\avg{|f|}_Q
 \sum_{P\in\operatorname{ch}_{\Sscr}(Q)}|P|
 <\sum_{P\in\operatorname{ch}_{\Sscr}(Q)}\int_P|f|
 \leq\int_Q|f|
 =\avg{|f|}_Q|Q|.
\]
Consequently,
\[
 \left|\bigcup\limits_{P\in\operatorname{ch}_{\Sscr}(Q)}P\right|
 \leq\frac{|Q|}{2}.
\]
If $\avg{|f|}_Q=0$, then $|f|=0$ almost everywhere on $Q$. Hence every
subcube $P\subset Q$ has $\avg{|f|}_P=0$, and no child of $Q$ is
selected. In either case, the set
\[
 E_Q
 =Q\setminus
 \bigcup\limits_{P\in\operatorname{ch}_{\Sscr}(Q)}P
\]
satisfies $|E_Q|\geq|Q|/2$.

It remains to check that the sets $E_Q$, $Q\in\Sscr$, are pairwise
disjoint. Let $Q,R\in\Sscr$ and $Q\neq R$. If $Q\cap R=\varnothing$,
then clearly $E_Q\cap E_R=\varnothing$. Otherwise, the dyadic-grid
property implies that one cube contains the other. Suppose that
$R\subsetneq Q$ by symmetry. By the recursive construction, there is a finite chain
of selected cubes
\[
 Q=Q_0\supsetneq Q_1\supsetneq\cdots\supsetneq Q_m=R,
\]
where $Q_{i+1}$ is a child of $Q_i$ for every $i$. In particular,
$Q_1\in\operatorname{ch}_{\Sscr}(Q)$ and $R\subseteq Q_1$. Setting
$P=Q_1$, we obtain
\[
 E_R\subset R\subseteq P,
 \qquad
 E_Q\subset Q\setminus P,
\]
so $E_Q\cap E_R=\varnothing$. Thus $\Sscr$ is $1/2$-sparse.

We now prove the pointwise estimate
\eqref{eq 20260702-17}. Fix $R\in\Fscr$. Since $\Fscr$ is
finite
and $\D$ is nesting, there is a unique inclusion-maximal cube $Q_0$ of $\Fscr$ that contains
$R$, and this cube was selected at the first stage. 

Starting from $Q_0$, proceed as follows. If $R$ is contained in a child
of the current selected cube, replace the current cube by that child.
Such a child is unique because the children are pairwise disjoint. Each
replacement gives a proper subcube, and $\Sscr$ is finite, so after
finitely many steps we reach a selected cube $Q$ such that
\[
 R\subseteq Q
\]
and that
\begin{equation}\label{eq:260803-13}
 R\not\subseteq P
 \end{equation}
for every $P\in\operatorname{ch}_{\Sscr}(Q)$. 
We show that this terminal cube $Q$
satisfies
\[
 \avg{|f|}_R\leq2\avg{|f|}_Q.
\]
If $R=Q$, this is immediate. Suppose that $R\subsetneq Q$ and that this
inequality is false. Then
\[
 \mathcal G
 =\left\{P\in\Fscr:\ R\subseteq P\subsetneq Q,
 \ \avg{|f|}_P>2\avg{|f|}_Q\right\}
\]
is nonempty because $R\in\mathcal G$. Choose an inclusion-maximal cube
$P$ in $\mathcal G$. 
There are two possibilities, and both lead to a contradiction.
\begin{itemize}
\item If $P$ is not inclusion-maximal in $\mathcal B(Q)$, then there
exists $P'\in\mathcal B(Q)$ such that $P\subsetneq P'$. Since
$R\subseteq P\subsetneq P'\subsetneq Q$, we have $P'\in\mathcal G$,
which contradicts the inclusion-maximality of $P$ in $\mathcal G$.

\item If $P$ is inclusion-maximal in $\mathcal B(Q)$, then $P$ is a
selected child of $Q$. Since $R\subseteq P$, this contradicts
\eqref{eq:260803-13}. 
\end{itemize}
Therefore
\[
 \avg{|f|}_R\leq2\avg{|f|}_Q.
\]

For $x\in R$, the cube $Q$ also contains $x$, and the term
$\avg{|f|}_Q\one_Q(x)$ occurs in $\A_{\Sscr}f(x)$. Hence
\[
 \avg{|f|}_R\one_R(x)
 \leq2\avg{|f|}_Q\one_Q(x)
 \leq2\A_{\Sscr}f(x).
\]
The same inequality is trivial when $x\notin R$
since the left-hand side is zero. Taking the maximum over
$R\in\Fscr$ proves \eqref{eq 20260702-17}.
\end{proof}

We next explain how finitely many dyadic maximal operators control the
ordinary maximal operator. A standard construction of adjacent dyadic
grids provides $\D^1,\ldots,\D^{3^n}$ with
the following property: for every axis-parallel cube $Q$, there are an
index $t\in\{1,\ldots,3^n\}$ and a cube $R\in\D^t$ such that
\begin{equation}\label{eq 20260702-18}
 Q\subset R,
 \qquad
 |R|\leq 6^n |Q|.
\end{equation}
Thus an arbitrary cube can be enlarged to a dyadic cube from one of the
fixed grids, while its measure increases by at most the factor
$ 6^n $. If $x\in Q$ and $R$ is chosen as above, then
\[
 \avg{|f|}_Q
 \leq\frac{|R|}{|Q|}\avg{|f|}_R
 \leq 6^n \M_{\D^t}f(x).
\]
Taking the supremum over all cubes $Q$ containing $x$ gives
\begin{equation}\label{eq 20260702-19}
 \M f(x)
 \leq 6^n \max\limits_{1\leq t\leq3^n}\M_{\D^t}f(x)
 \leq 6^n \sum_{t=1}^{3^n}\M_{\D^t}f(x),
\end{equation}
See \cite{Lerner2013} for this construction.

\section{Sparse bounds obtained from the vector-valued estimate}\label{sec 20260703-1}

\subsection{A vector-valued sparse estimate}

The proof of the next proposition uses the sets $E_Q$ from the
definition of sparseness in two ways. For a fixed nonnegative function
$f$ and each $Q\in\Sscr$, consider the auxiliary function
$\avg{f}_Q\one_{E_Q}$. Since the sets $E_Q$ are pairwise disjoint, the
$\ell^q$-sum
\[
\left(\sum_{Q \in \Sscr}|\avg{f}_Q\one_{E_Q}|^q\right)^{\frac1q}
\] 
of these auxiliary functions is controlled by $\M f$. On the
other hand, the estimate $|E_Q|\geq\eta|Q|$ gives
\[
 \M\bigl(\avg{f}_Q\one_{E_Q}\bigr)(x)
 \geq\eta\avg{f}_Q
 \qquad(x\in Q).
\]
These two observations give the estimate in the next proposition.

\begin{proposition}\label{prop 20260703-1}
Let $X$ be a ball Banach function space, let $1<q<\infty$, and assume
that \eqref{eq 20260702-7} holds on $X$ with a constant $C>0$
independent of $N$. If $\Sscr$ is a finite $\eta$-sparse collection,
then
\begin{equation}\label{eq 20260703-1}
 \norm[X]{\T_{\Sscr,q}f}
 \leq \eta^{-1}C^2\norm[X]{f}
 \qquad(f\in X).
\end{equation}
The constant is independent of $\Sscr$ and of the measurable sets
$E_Q$ used in \eqref{eq 20260702-14}.
\end{proposition}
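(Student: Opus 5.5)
We may assume $f\geq0$, since $\T_{\Sscr,q}f$ depends only on $|f|$ and $\norm[X]{|f|}=\norm[X]{f}$. By the remark after \eqref{eq 20260702-8}, $f$ is locally integrable, so every average $\avg{f}_Q$ with $Q\in\Sscr$ is finite. Enumerate the finite collection as $\Sscr=\{Q_1,\ldots,Q_N\}$, and let $E_{Q_j}$ be the sets from \eqref{eq 20260702-14}. Put
\[
 g_j=\avg{f}_{Q_j}\one_{E_{Q_j}}\qquad(1\leq j\leq N).
\]
The plan is to apply \eqref{eq 20260702-7} to the family $\{g_j\}_{j=1}^N$. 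The sparse operator is controlled by the left-hand side, and the right-hand side is controlled by $\M f$. One constant $C$ comes from the vector-valued inequality and the other from its scalar case \eqref{eq 20260702-8}.

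\emph{Lower bound.} Fix $x\in Q_j$. Use the cube $Q_j$ itself in the definition of $\M$:
\[
 \M g_j(x)\geq\frac{1}{|Q_j|}\int_{Q_j}\avg{f}_{Q_j}\one_{E_{Q_j}}
 =\avg{f}_{Q_j}\frac{|E_{Q_j}|}{|Q_j|}\geq\eta\avg{f}_{Q_j}.
\]
Hence $\eta^q\avg{f}_{Q_j}^q\one_{Q_j}\leq(\M g_j)^q$ pointwise. Summing over $j$ and taking $q$-th roots gives
\[
 \eta\,\T_{\Sscr,q}f\leq\Bigl(\sum_{j=1}^N(\M g_j)^q\Bigr)^{1/q}.
\]

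\emph{Upper bound.} The sets $E_{Q_j}$ are pairwise disjoint, so at each point at most one $g_j$ is nonzero. Therefore
\[
 \Bigl(\sum_{j=1}^N|g_j|^q\Bigr)^{1/q}=\sum_{j=1}^N\avg{f}_{Q_j}\one_{E_{Q_j}}\leq\M f,
\]
using $E_{Q_j}\subset Q_j$ and $\avg{f}_{Q_j}\one_{Q_j}\leq\M f$. By \eqref{eq 20260702-8}, $\M f\in X$ with $\norm[X]{\M f}\leq C\norm[X]{f}$. The lattice property, Definition~\ref{def 20260702-1}$(i)$, then shows that the right-hand side of \eqref{eq 20260702-7} is finite for this family.

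\emph{Combining.} The lattice property and \eqref{eq 20260702-7} give
\[
 \eta\norm[X]{\T_{\Sscr,q}f}
 \leq\Bigl\|\Bigl(\sum_{j}(\M g_j)^q\Bigr)^{1/q}\Bigr\|_X
 \leq C\Bigl\|\Bigl(\sum_{j}|g_j|^q\Bigr)^{1/q}\Bigr\|_X
 \leq C\norm[X]{\M f}\leq C^2\norm[X]{f},
\]
which is \eqref{eq 20260703-1}. Nothing in this chain depends on $\Sscr$, on $N$, or on the choice of the sets $E_Q$ beyond $|E_Q|\geq\eta|Q|$ and their disjointness.

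The only delicate point is the admissibility of $\{g_j\}_{j=1}^N$ in \eqref{eq 20260702-7}. This needs $\avg{f}_{Q_j}<\infty$, which holds by local integrability, and finiteness of the right-hand side, which holds because it is dominated by $\M f\in X$. Both are settled above, so I expect no step of the argument to be a real obstacle.
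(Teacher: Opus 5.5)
Your proposal is correct and follows the paper's proof essentially step for step. You apply the vector-valued inequality to the auxiliary functions $\avg{f}_Q\one_{E_Q}$, bound their $\ell^q$-sum by $\M f$ using disjointness, bound $\eta\,\T_{\Sscr,q}f$ below by testing $\M$ on $Q$ itself, and finish with the scalar case $N=1$. Your explicit check that the right-hand side is finite (via domination by $\M f\in X$) is a harmless variant of the paper's remark that each $h_Q$ lies in $X$.
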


\begin{proof}
It is enough to consider the case $f\geq0$. 
Choose measurable sets $E_Q\subset Q$, $Q\in\Sscr$, satisfying
\eqref{eq 20260702-14}, and define
\begin{equation}\label{eq 20260703-2}
 h_Q=\avg{f}_Q\one_{E_Q}.
\end{equation}
Each $h_Q$ belongs to $X$ by Lemma~\ref{lem 20260702-2}$(i)$. Since the sets
$E_Q$ are pairwise disjoint,
\[
 \left(\sum_{Q\in\Sscr}|h_Q|^q\right)^{\frac1q}
 =\sum_{Q\in\Sscr}\avg{f}_Q\one_{E_Q}.
\]
If $x\in E_Q$, then $x\in Q$, and hence
$\avg{f}_Q\leq\M f(x)$. Therefore
\begin{equation}\label{eq 20260703-3}
 \left(\sum_{Q\in\Sscr}|h_Q|^q\right)^{\frac1q}
 \leq\M f.
\end{equation}

For $x\in Q$,
\[
 \M h_Q(x)
 \geq\avg{h_Q}_Q
 =\avg{f}_Q\frac{|E_Q|}{|Q|}
 \geq\eta\avg{f}_Q.
\]
Consequently,
\begin{equation}\label{eq 20260703-4}
 \eta\T_{\Sscr,q}f=\eta\left(\sum_{Q\in\Sscr}\avg{|f|}_Q^q\one_Q\right)^{\frac1q}
 \leq\left(\sum_{Q\in\Sscr}(\M h_Q)^q\right)^{\frac1q}.
\end{equation}
Recall that, by taking $N=1$ in \eqref{eq 20260702-7},
we obtain
\[
 \norm[X]{\M f}\leq C\norm[X]{f}.
\]
Therefore, by \eqref{eq 20260703-4} and
\eqref{eq 20260703-3}, 
\begin{align*}
 \eta\norm[X]{\T_{\Sscr,q}f}
 \leq C
 \left\|\left(\sum_{Q\in\Sscr}|h_Q|^q\right)^{\frac1q}\right\|_X
 \leq C\norm[X]{\M f}
 \leq C^2\norm[X]{f}.
\end{align*}
This is the desired result.
\end{proof}

\subsection{Binomial iteration}

Let $\{a_j\}_{j=1}^N$ be a finite family of nonnegative numbers, and let
$q>1$. Then
\[
 \left(\sum_{j=1}^Na_j^q\right)^{\frac1q}
 \leq\sum_{j=1}^Na_j,
\]
but the reverse inequality requires a constant dependent on $N$. Thus a
bound for $\T_{\Sscr,q}$ does not immediately give a bound for
$\A_{\Sscr}$. The next theorem shows that repeated application of
$\T_{\Sscr,q}$ produces binomial coefficients large enough to control
the sum defining $\A_{\Sscr}$. Since $\Sscr$ is finite, if
$g\in L^1_{\mathrm{loc}}(\R^n)$, then every average occurring in
$\T_{\Sscr,q}g$ is finite, and $\T_{\Sscr,q}g$ is a bounded
measurable function supported on $\bigcup\limits_{Q\in\Sscr}Q$. Hence
$\T_{\Sscr,q}g\in L^1_{\mathrm{loc}}(\R^n)$, so every iteration below
is well defined for locally integrable functions.

\begin{theorem}\label{thm 20260703-1}
Let $1<q<\infty$, let $m\in\N$ satisfy $m>q$, and let
$\Sscr$ be a finite subcollection of a dyadic grid. Put
\begin{equation}\label{eq 20260703-5}
 \Gamma_{q,m}
 =\left(
 \sum_{\ell=1}^{\infty}
 \binom{\ell+m-2}{m-1}^{-1/(q-1)}
 \right)^{1/q'}.
\end{equation}
Then $\Gamma_{q,m}<\infty$, and
\begin{equation}\label{eq 20260703-6}
 \A_{\Sscr}f(x)
 \leq\Gamma_{q,m}\,\T_{\Sscr,q}^{\,m}f(x)
\end{equation}
for every locally integrable $f$ and almost every $x$. Here
$\T_{\Sscr,q}^{\,m}$ denotes the $m$-fold composition of
$\T_{\Sscr,q}$.
\end{theorem}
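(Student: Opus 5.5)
The plan is to fix a point $x$ and compare both sides along the chain of cubes of $\Sscr$ containing $x$. If $x\notin\bigcup_{Q\in\Sscr}Q$, both sides of \eqref{eq 20260703-6} vanish. Otherwise, since $\Sscr$ is a finite subcollection of a dyadic grid, the cubes of $\Sscr$ containing $x$ are totally ordered by inclusion. Write them as
\[
 Q_1\supsetneq Q_2\supsetneq\cdots\supsetneq Q_L ,
\]
and set $a_i=\avg{|f|}_{Q_i}$, so that $\A_{\Sscr}f(x)=\sum_{i=1}^L a_i$. The key structural remark is that for every $k$ and every $y\in Q_k$, the cubes $Q_1,\dots,Q_k$ all contain $y$. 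Hence, for any nonnegative $g\in L^1_{\mathrm{loc}}$,
\[
 \T_{\Sscr,q}g(y)\geq\Bigl(\sum_{i=1}^k\avg{g}_{Q_i}^q\Bigr)^{1/q}\qquad(y\in Q_k).
\]
By the remark preceding the theorem, all iterates $\T^{\,j}_{\Sscr,q}f$ are locally integrable, so these averages are finite.

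Next I will build lower bounds for the iterates by induction on $j$. Define $b_1(k)=\bigl(\sum_{i\leq k}a_i^q\bigr)^{1/q}$ and $b_j(k)=\bigl(\sum_{i\leq k}b_{j-1}(i)^q\bigr)^{1/q}$. The claim is that $\T^{\,j}_{\Sscr,q}f\geq b_j(k)$ pointwise on all of $Q_k$ for $1\leq k\leq L$. The case $j=1$ is the displayed inequality with $g=|f|$. For the inductive step, the hypothesis gives $\avg{\T^{\,j-1}_{\Sscr,q}f}_{Q_i}\geq b_{j-1}(i)$, since the lower bound holds on the whole of $Q_i$. Plugging this into the displayed inequality with $g=\T^{\,j-1}_{\Sscr,q}f$ yields the claim. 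The important point is that the lower bound at level $j-1$ is uniform on each $Q_i$, not merely at $x$; this is exactly why the nesting of the chain is needed.

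Unwinding the recursion, which is an iterated partial summation, gives
\[
 b_m(L)^q=\sum_{i=1}^L c_i\,a_i^q,\qquad c_i=\binom{L-i+m-1}{m-1}.
\]
This identity can be checked by counting nondecreasing index chains $i\leq i_2\leq\cdots\leq i_m\leq L$, or by induction on $m$ via the hockey-stick identity. Since $x\in Q_L$, we get $\T^{\,m}_{\Sscr,q}f(x)\geq b_m(L)$. Hölder's inequality then gives
\[
 \sum_{i=1}^L a_i=\sum_{i=1}^L c_i^{1/q}a_i\,c_i^{-1/q}\leq\Bigl(\sum_{i=1}^L c_ia_i^q\Bigr)^{1/q}\Bigl(\sum_{i=1}^L c_i^{-q'/q}\Bigr)^{1/q'}.
\]
Here $q'/q=1/(q-1)$, and after substituting $\ell=L-i+1$ the second factor is at most $\Gamma_{q,m}$. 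This proves \eqref{eq 20260703-6}.

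Finiteness of $\Gamma_{q,m}$ follows from $\binom{\ell+m-2}{m-1}\geq \ell^{m-1}/(m-1)!$. The series is therefore dominated by a constant times $\sum_\ell\ell^{-(m-1)/(q-1)}$, which converges precisely because $m>q$. I expect the only delicate step to be the binomial bookkeeping in the unwinding, namely getting the exact coefficient $c_i$. Everything else, including the measurability and almost-everywhere issues, is routine given the finiteness of $\Sscr$.
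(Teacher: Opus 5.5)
Your proof is correct and follows the paper's argument: the chain of cubes through $x$, an induction that produces the weights $\binom{L-i+m-1}{m-1}$ via the hockey-stick identity, then H\"older's inequality, then the polynomial lower bound on the binomial coefficients for finiteness of $\Gamma_{q,m}$. The one organizational difference is that you prove uniform lower bounds on each $Q_k$ of the fixed chain through $x$ and unwind the recursion afterwards. This spares you the paper's auxiliary claim that the chain through any $y\in Q_s$ begins with $Q_1,\dots,Q_s$, and the monotonicity of $u\mapsto\binom{u}{k-1}$ it relies on.
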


\begin{proof}
It is enough to consider $f\geq0$. Put
\[
 \Omega_{\Sscr}=\bigcup\limits_{Q\in\Sscr}Q.
\]
If $x\notin\Omega_{\Sscr}$, then $\one_Q(x)=0$ for every
$Q\in\Sscr$. Hence, by \eqref{eq 20260702-15} and
\eqref{eq 20260702-16},
\[
 \A_{\Sscr}f(x)=0
 \qquad\text{and}\qquad
 \T_{\Sscr,q}g(x)=0
\]
for every function $g$ for which $\T_{\Sscr,q}g$ is defined. It follows
that $\T_{\Sscr,q}^{\,m}f(x)=0$, so \eqref{eq 20260703-6} holds at
$x$. Therefore, it remains to prove \eqref{eq 20260703-6} for
$x\in\Omega_{\Sscr}$.

\emph{Step 1. A lower bound for the iterations.}
For each $x\in\Omega_{\Sscr}$, the cubes of $\Sscr$ containing $x$ form
a nonempty finite chain,
since $\Sscr$ is a finite set. Write this chain as
\[
 Q_1\supsetneq Q_2\supsetneq\cdots\supsetneq Q_N,
 \qquad
 a_j=\avg{f}_{Q_j}.
\]
Thus $N\geq1$. We prove by induction on $k\geq1$ that, for every
$x\in\Omega_{\Sscr}$,
\begin{equation}\label{eq 20260703-7}
 \bigl(\T_{\Sscr,q}^{\,k}f(x)\bigr)^q
 \geq
 \sum_{j=1}^N
 \binom{N-j+k-1}{k-1}a_j^q,
\end{equation}
where $N$, $Q_1,\ldots,Q_N$, and $a_1,\ldots,a_N$ are determined by
$x$ as above.

For $k=1$, only the cubes $Q_1,\ldots,Q_N$ contribute to
\eqref{eq 20260702-15}. Therefore,
\[
 \bigl(\T_{\Sscr,q}f(x)\bigr)^q
 =\sum_{j=1}^Na_j^q.
\]
Since $\binom{N-j}{0}=1$ for $1\leq j\leq N$, this proves
\eqref{eq 20260703-7} when $k=1$.

Assume that \eqref{eq 20260703-7} holds for an integer $k\geq1$ at every
point of $\Omega_{\Sscr}$. Let $1\leq s\leq N$ and $y\in Q_s$.
The cubes
$Q_1,\ldots,Q_s$ all contain $y$. Write all cubes of $\Sscr$ containing
$y$ in decreasing order as
\begin{equation}\label{eq:260803-34}
 R_1\supsetneq R_2\supsetneq\cdots\supsetneq R_L.
\end{equation}
We claim that
\[
 R_j=Q_j\qquad(1\leq j\leq s).
\]
Indeed, let $R\in\Sscr$ contain $y$ and suppose that $R$ is not strictly
contained in $Q_s$. Since $R$ and $Q_s$ are dyadic cubes in the same
grid and intersect at $y$, one of them must contain the other. By our
assumption on $R$, we therefore have $Q_s\subset R$. Since
$x\in Q_s$, it follows that $R$ also contains $x$. Hence $R$ is one of
the cubes $Q_1,\ldots,Q_s$. Conversely, each of the cubes
$Q_1,\ldots,Q_s$ contains $Q_s$ and therefore contains $y$.
Thus, $Q_1,\ldots,Q_s$ are the only cubes in $\Sscr$ that contain $y$
and are not strictly contained in $Q_s$.
Since \eqref{eq:260803-34} holds, we have $Q_j=R_j$
for all $j=1,2,\ldots,s$. This proves the claim. In particular,
$L\geq s$, and any further cubes $R_{s+1},\ldots,R_L$, if they occur,
are strictly contained in $Q_s$.
Put $b_j=\avg{f}_{R_j}$ for $1\leq j\leq L$. By the claim,
$b_j=a_j$ for $1\leq j\leq s$. Applying the induction hypothesis at the
point $y$ to the chain $R_1,\ldots,R_L$, and then keeping only its first
$s$ nonnegative terms, gives
\begin{align*}
 \bigl(\T_{\Sscr,q}^{\,k}f(y)\bigr)^q
 &\geq
 \sum_{j=1}^L
 \binom{L-j+k-1}{k-1}b_j^q\\
 &\geq
 \sum_{j=1}^s
 \binom{L-j+k-1}{k-1}a_j^q \\
 &\geq
 \sum_{j=1}^s
 \binom{s-j+k-1}{k-1}a_j^q.
\end{align*}
For the last inequality, we used $L\geq s$ and the fact that
$u\mapsto\binom{u}{k-1}$ is nondecreasing for integers $u\geq k-1$.

Set
\[
 B_s=\sum_{j=1}^s
 \binom{s-j+k-1}{k-1}a_j^q.
\]
The preceding estimate holds for every $y\in Q_s$, and hence
\[
 \T_{\Sscr,q}^{\,k}f(y)\geq B_s^{\frac1q}
 \qquad(y\in Q_s).
\]
Taking the average over $Q_s$ gives
\[
 \avg{\T_{\Sscr,q}^{\,k}f}_{Q_s}
 \geq B_s^{\frac1q},
 \qquad
 \left(\avg{\T_{\Sscr,q}^{\,k}f}_{Q_s}\right)^q
 \geq B_s.
\]
The cubes of $\Sscr$ that contain $x$ are exactly
$Q_1,\ldots,Q_N$. Therefore, by the definition of
$\T_{\Sscr,q}$,
\begin{align*}
 \bigl(\T_{\Sscr,q}^{\,k+1}f(x)\bigr)^q
 &=\sum_{s=1}^N
 \left(\avg{\T_{\Sscr,q}^{\,k}f}_{Q_s}\right)^q\\
 &\geq\sum_{s=1}^NB_s\\
 &=\sum_{s=1}^N\sum_{j=1}^s
 \binom{s-j+k-1}{k-1}a_j^q\\
 &=\sum_{j=1}^N
 \left(\sum_{s=j}^N
 \binom{s-j+k-1}{k-1}\right)a_j^q\\
 &=\sum_{j=1}^N
 \binom{N-j+k}{k}a_j^q.
\end{align*}
In the last equality, we set $u=s-j$ and used the binomial identity
\[
 \sum_{s=j}^N\binom{s-j+k-1}{k-1}
 =\sum_{u=0}^{N-j}\binom{u+k-1}{k-1}
 =\binom{N-j+k}{k}.
\]
The final expression is the right-hand side of \eqref{eq 20260703-7}
with $k+1$ in place of $k$.
This proves \eqref{eq 20260703-7} for $k+1$.

\emph{Step 2. Comparison with $\A_{\Sscr}$.}
Apply \eqref{eq 20260703-7} with $k=m$ and set
\[
 w_j=\binom{N-j+m-1}{m-1}.
\]
H\"older's inequality gives
\begin{align*}
 \sum_{j=1}^Na_j
 &\leq
 \left(\sum_{j=1}^Nw_ja_j^q\right)^{\frac1q}
 \left(\sum_{j=1}^Nw_j^{-1/(q-1)}\right)^{1/q'}
 \leq
 \Gamma_{q,m}\,\T_{\Sscr,q}^{\,m}f(x).
\end{align*}
The left-hand side is $\A_{\Sscr}f(x)$. To finish the proof, 
it remains to verify the convergence in
\eqref{eq 20260703-5}. Since $m>q>1$, one has $m\geq2$. For every
integer $\ell\geq1$,
\begin{align*}
 \binom{\ell+m-2}{m-1}
 &=\frac{1}{(m-1)!}\prod_{i=0}^{m-2}(\ell+i).
\end{align*}
Each factor in the product is at least $\ell$. Also,
$\ell+i\leq\ell+m-2\leq(m-1)\ell$ for $0\leq i\leq m-2$.
Therefore
\begin{equation}\label{eq 20260703-7a}
 \frac{\ell^{m-1}}{(m-1)!}
 \leq
 \binom{\ell+m-2}{m-1}
 \leq
 \frac{(m-1)^{m-1}}{(m-1)!}\,\ell^{m-1}
 \qquad(\ell\geq1).
\end{equation}
Thus the binomial coefficient in \eqref{eq 20260703-7a} is comparable to
$\ell^{m-1}$ for every $\ell\geq1$, with constants depending only on $m$.
In particular, the summand in \eqref{eq 20260703-5} is bounded above by a
constant depending only on $m$ and $q$ times
\[
 \ell^{-(m-1)/(q-1)}.
\]
Since $m>q$, one has $(m-1)/(q-1)>1$. The corresponding $p$-series
converges, and hence $\Gamma_{q,m}<\infty$.
\end{proof}

\begin{corollary}\label{cor 20260703-1}
Let $X$ be a ball Banach function space, let $1<q<\infty$, and assume
that \eqref{eq 20260702-7} holds on $X$ with a constant $C>0$
independent of $N$. Let $m\in\N$ satisfy $m>q$. If $\Sscr$ is a finite
$\eta$-sparse subcollection of a dyadic grid, then
\begin{equation}\label{eq 20260703-8}
 \norm[X]{\A_{\Sscr}f}
 \leq
 \Gamma_{q,m}\bigl(\eta^{-1}C^2\bigr)^m\norm[X]{f}
 \qquad(f\in X).
\end{equation}
In particular, the constant is independent of the dyadic grid and of
$\Sscr$.
\end{corollary}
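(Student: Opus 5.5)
The plan is to combine the pointwise estimate of Theorem~\ref{thm 20260703-1} with the $\T_{\Sscr,q}$ bound of Proposition~\ref{prop 20260703-1}, iterated $m$ times. Fix a finite $\eta$-sparse subcollection $\Sscr$ of a dyadic grid, an integer $m>q$, and $f\in X$. We may assume $f\geq0$, since $\A_{\Sscr}f$ and $\T_{\Sscr,q}f$ depend only on $|f|$.

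\textbf{Step 1: local integrability.} Under \eqref{eq 20260702-7}, Lemma~\ref{lem 20260702-3} gives $X\subset L^1_{\mathrm{loc}}(\R^n)$. Alternatively, Lemma~\ref{lem 20260702-2}$(ii)$ gives this directly for a ball Banach function space. Hence $f$ is locally integrable, and Theorem~\ref{thm 20260703-1} applies:
\[
 \A_{\Sscr}f\leq\Gamma_{q,m}\,\T_{\Sscr,q}^{\,m}f
 \qquad\text{almost everywhere}.
\]

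\textbf{Step 2: iterate the sparse bound.} I will show by induction on $k$ that $\T_{\Sscr,q}^{\,k}f\in X$ and
\[
 \norm[X]{\T_{\Sscr,q}^{\,k}f}\leq(\eta^{-1}C^2)^k\norm[X]{f}.
\]
The inductive step applies Proposition~\ref{prop 20260703-1} to $g=\T_{\Sscr,q}^{\,k-1}f$. This requires $g\in X$, which is exactly the membership part of the previous step. Proposition~\ref{prop 20260703-1} is stated as a norm inequality on $X$, so membership must be checked separately. By the convention \eqref{eq 20260702-1}, a finite norm bound means the image lies in $X$. Independently, since $\Sscr$ is finite and $g$ is locally integrable, $\T_{\Sscr,q}g$ is a bounded function supported in a bounded set. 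It therefore lies in $X$ by Lemma~\ref{lem 20260702-2}$(i)$ and the lattice property. The same observation keeps every iterate locally integrable, so each iteration is well defined.

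\textbf{Step 3: conclude.} By the lattice property in Definition~\ref{def 20260702-1}$(i)$, Steps 1 and 2 give $\A_{\Sscr}f\in X$ and
\[
 \norm[X]{\A_{\Sscr}f}\leq\Gamma_{q,m}\norm[X]{\T_{\Sscr,q}^{\,m}f}\leq\Gamma_{q,m}(\eta^{-1}C^2)^m\norm[X]{f},
\]
which is \eqref{eq 20260703-8}. The constant depends only on $q$, $m$, $\eta$ and $C$. Neither $\Gamma_{q,m}$ nor the bound of Proposition~\ref{prop 20260703-1} depends on the grid or on $\Sscr$, so the constant is uniform.

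The main point needing care is the membership of the iterates in $X$ at each stage, handled in Step 2; the rest is direct bookkeeping.
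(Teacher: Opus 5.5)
Your proposal is correct and is essentially the paper's proof: iterate the bound of Proposition~\ref{prop 20260703-1} $m$ times to get $\norm[X]{\T_{\Sscr,q}^{\,m}f}\leq(\eta^{-1}C^2)^m\norm[X]{f}$, then apply the pointwise estimate \eqref{eq 20260703-6} of Theorem~\ref{thm 20260703-1} together with the lattice property. Your extra checks, that each iterate is locally integrable and lies in $X$, make explicit what the paper leaves implicit in the remark preceding Theorem~\ref{thm 20260703-1}.
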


\begin{proof}
By Proposition~\ref{prop 20260703-1},
\[
 \norm[X]{\T_{\Sscr,q}f}
 \leq K\norm[X]{f},
 \qquad
 K=\eta^{-1}C^2.
\]
Applying this estimate $m$ times yields
\[
 \norm[X]{\T_{\Sscr,q}^{\,m}f}\leq K^m\norm[X]{f}.
\]
Now use \eqref{eq 20260703-6}.
\end{proof}

\section{A finite dyadic sparse criterion and the proof of Theorem~\ref{thm 20260701-1}}
\label{sec 20260704-1}

\subsection{The finite dyadic sparse criterion}

Lorist and Nieraeth proved the following observation
\cite[Lemma~3.4]{LoristNieraethCompact2024}.
\begin{lemma}
\label{lemm 20260810-1-noi}
For a Banach function space $X$, the two
bounds
\[
 \M:X\to X
 \qquad\text{and}\qquad
 \M:X'\to X'
\]
are equivalent to the uniform estimate
\[
 \sup\limits_{\Sscr\,\text{is }1/2\text{-sparse}}
 \norm[X\to X]{\A_{\Sscr}}<\infty.
\]
\end{lemma}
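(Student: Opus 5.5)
We prove the two implications separately. Throughout we use the pairing identity
\[
 \int_{\R^n}\A_{\Sscr}f\,g\,dx
 =\sum_{Q\in\Sscr}\avg{|f|}_Q\avg{|g|}_Q|Q|
 =\int_{\R^n}|f|\,\A_{\Sscr}g\,dx
 \qquad(g\geq0).
\]
It holds for any sparse $\Sscr$, finite or countable, by monotone convergence applied to \eqref{eq 20260702-16}. We also use the Lorentz--Luxemburg identity $X''=X$ with equality of norms (Lemma~\ref{lem 20260812-1-noi}). We assume $X$ is a ball Banach function space, as in the rest of the paper, so that the local embeddings of Lemma~\ref{lem 20260702-2} are available.

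\emph{Necessity.} Suppose $\M$ is bounded on $X$ and on $X'$. Let $\Sscr$ be $1/2$-sparse with sets $E_Q$, and let $f\in X$ and $g\in X'$ be nonnegative. For $y\in E_Q\subset Q$ we have $\avg{f}_Q\avg{g}_Q\leq\M f(y)\M g(y)$. Hence
\[
 \sum_{Q}\avg{f}_Q\avg{g}_Q|Q|\leq2\sum_Q\int_{E_Q}\M f\,\M g
 \leq2\int\M f\,\M g.
\]
The last step uses that the $E_Q$ are pairwise disjoint. By \eqref{eq 20260702-3}, the right-hand side is at most $2\norm[X\to X]{\M}\norm[X'\to X']{\M}\norm[X]{f}\norm[X']{g}$. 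Taking the supremum over $\norm[X']{g}\leq1$ and using $X''=X$ gives
\[
 \norm[X]{\A_{\Sscr}f}\leq2\norm[X\to X]{\M}\,\norm[X'\to X']{\M}\,\norm[X]{f},
\]
uniformly in $\Sscr$.

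\emph{Sufficiency.} Let $K$ denote the supremum of $\norm[X\to X]{\A_{\Sscr}}$ over $1/2$-sparse $\Sscr$.

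\emph{Local integrability.} Applying the bound to $\Sscr=\{Q\}$ shows $\avg{|f|}_Q\one_Q\in X$. Hence $\avg{|f|}_Q<\infty$, so every $f\in X$ is locally integrable. For $X'$ this is already given by Lemma~\ref{lem 20260702-2}.

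\emph{Boundedness on $X$.} Fix one of the grids $\D^t$ and a finite $\Fscr\subset\D^t$. Lemma~\ref{lem 20260702-4} gives $\M_{\Fscr}f\leq2\A_{\Sscr}f$ for some finite $1/2$-sparse $\Sscr\subset\D^t$, so $\norm[X]{\M_{\Fscr}f}\leq2K\norm[X]{f}$. Now choose finite $\Fscr_k\uparrow\D^t$, for instance the cubes of generations $-k,\dots,k$ contained in the ball of radius $k$. Then $\M_{\Fscr_k}f\uparrow\M_{\D^t}f$, and the Fatou property yields $\norm[X]{\M_{\D^t}f}\leq2K\norm[X]{f}$. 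Finally \eqref{eq 20260702-19} gives $\norm[X]{\M f}\leq 6^n3^n\cdot2K\norm[X]{f}$.

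\emph{Boundedness on $X'$.} By the pairing identity, for $g\in X'$,
\[
 \norm[X']{\A_{\Sscr}g}=\sup_{\norm[X]{f}\leq1}\int|f|\,\A_{\Sscr}g
 =\sup_{\norm[X]{f}\leq1}\int\A_{\Sscr}f\,|g|
 \leq K\norm[X']{g}.
\]
Since $X'$ is again a ball Banach function space with the Fatou property (Lemma~\ref{lem 20260702-associate-fatou} and Lemma~\ref{lem 20260812-1-noi}), the argument of the previous paragraph applies verbatim on $X'$. It gives $\M:X'\to X'$.

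The main delicate point is the passage from finite sparse families to the full dyadic maximal operator. This requires the monotone exhaustion $\Fscr_k\uparrow\D^t$ together with the Fatou property in both $X$ and $X'$. It also requires the a priori local integrability, so that Lemma~\ref{lem 20260702-4} applies.
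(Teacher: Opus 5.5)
Your proof is correct and follows the paper's route. The sufficiency half is exactly the argument of Proposition~\ref{prop 20260704-1}: the pairing identity transfers the sparse bound to $X'$, Lemma~\ref{lem 20260702-4} gives finite sparse domination, Fatou exhaustion passes to $\M_{\D}$, and the adjacent grids in \eqref{eq 20260702-19} give $\M$. For necessity the paper only cites Lorist--Nieraeth, and your estimate $\int\A_{\Sscr}f\,g\leq2\int\M f\,\M g$ followed by $X''=X$ is the standard duality argument behind that citation.

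Restricting to ball Banach function spaces matches how the paper applies the lemma. It also costs nothing for a general saturated Fatou space. Saturation gives $F\subset Q$ with $|F|>0$ and $\one_F\in X$, so $\A_{\{Q\}}\one_F=\frac{|F|}{|Q|}\one_Q\in X$, and hence every element of $X'$ is locally integrable.
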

The same equivalence is
also stated in
\cite[Theorem~1.2, conditions~$(iii)$ and~$(iv)$]{Nieraeth2026}. The spaces
considered here satisfy the hypotheses of these results: the Fatou
property is part of Definition~\ref{def 20260702-1}, and the saturation
property follows from Lemma~\ref{lem 20260702-2}$(iii)$.

Section~\ref{sec 20260703-1} gives a uniform estimate only for finite
$1/2$-sparse subcollections of dyadic grids. For the reader's
convenience, we explain why this restricted class is sufficient. The
implication from the two maximal-operator bounds to the finite dyadic
sparse estimate follows from
\cite[Lemma~3.4]{LoristNieraethCompact2024}. The converse follows directly
from Lemma~\ref{lem 20260702-4}, the Fatou properties of $X$ and $X'$, and
the finite family of adjacent dyadic grids in
\eqref{eq 20260702-18}--\eqref{eq 20260702-19}. We record the precise
form needed below.

\begin{proposition}\label{prop 20260704-1}
Let $X$ be a ball Banach function space. The following statements are
equivalent.
\begin{enumerate}
 \item[$(i)$] The operators $\M:X\to X$ and $\M:X'\to X'$ are bounded.
 \item[$(ii)$] There is a constant $C_S$ such that
 \[
 \norm[X]{\A_{\Sscr}f}\leq C_S\norm[X]{f}
 \qquad(f\in X)
 \]
 for every finite $1/2$-sparse subcollection $\Sscr$ of every dyadic
 grid.
\end{enumerate}
\end{proposition}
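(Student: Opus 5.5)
For $(i)\Rightarrow(ii)$ I would simply invoke Lemma~\ref{lemm 20260810-1-noi}. It gives $\sup_{\Sscr}\norm[X\to X]{\A_{\Sscr}}<\infty$ over all $1/2$-sparse collections, and in particular over finite $1/2$-sparse subcollections of dyadic grids. The hypotheses of that lemma hold for $X$ because of the Fatou property in Definition~\ref{def 20260702-1}$(ii)$ and the saturation in Lemma~\ref{lem 20260702-2}$(iii)$.

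For $(ii)\Rightarrow(i)$ I would argue directly.

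\emph{Boundedness on $X$.} Fix $f\in X$, a grid $\D^t$ from \eqref{eq 20260702-18}, and an increasing exhaustion of $\D^t$ by finite collections $\Fscr_k$. I would restrict to cubes of generation at most $k$ lying in the ball $B(0,k)$. Lemma~\ref{lem 20260702-4} gives a finite $1/2$-sparse $\Sscr_k\subset\D^t$ with $\M_{\Fscr_k}f\le 2\A_{\Sscr_k}f$; here $f$ is locally integrable by Lemma~\ref{lem 20260702-2}$(ii)$. Hence $\norm[X]{\M_{\Fscr_k}f}\le 2C_S\norm[X]{f}$. Since $\M_{\Fscr_k}f\uparrow\M_{\D^t}f$ pointwise, the Fatou property gives $\norm[X]{\M_{\D^t}f}\le 2C_S\norm[X]{f}$. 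Summing over the $3^n$ grids in \eqref{eq 20260702-19} yields $\norm[X]{\M f}\le 6^n3^n\cdot2C_S\norm[X]{f}$.

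\emph{Boundedness on $X'$.} This is where I expect the main work. I would dualize the sparse bound. For a finite $1/2$-sparse $\Sscr$ and nonnegative $f\in X$, $g\in X'$, Fubini gives the symmetric identity
\[
\int(\A_{\Sscr}f)g=\sum_{Q\in\Sscr}\avg{f}_Q\avg{g}_Q|Q|=\int f\,(\A_{\Sscr}g),
\]
so
\[
\int f\,\A_{\Sscr}g\le\norm[X]{\A_{\Sscr}f}\norm[X']{g}\le C_S\norm[X]{f}\norm[X']{g}.
\]
Taking the supremum over $\norm[X]{f}\le1$ in \eqref{eq 20260702-2} gives $\norm[X']{\A_{\Sscr}g}\le C_S\norm[X']{g}$. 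The point is that the definition of $X'$ itself, not the duality $X''=X$, delivers this. Since $g\in X'$ is locally integrable, I would then repeat the argument above in $X'$: Lemma~\ref{lem 20260702-4} for $g$, the Fatou property of $X'$ from Lemma~\ref{lem 20260702-associate-fatou}, the monotone limit over $\Fscr_k$, and \eqref{eq 20260702-19}. This gives $\M:X'\to X'$ with constant at most $2\cdot18^nC_S$.

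The only delicate points are the following. First, each $\Sscr_k$ must be finite and lie in a single grid; Lemma~\ref{lem 20260702-4} is stated exactly in that form. Second, one needs a.e.\ convergence $\M_{\Fscr_k}\uparrow\M_{\D^t}$, which holds because every cube of $\D^t$ eventually belongs to $\Fscr_k$. Third, $\A_{\Sscr}f$ must be measurable and finite-valued for finite $\Sscr$, which is clear.
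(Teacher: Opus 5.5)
Your proposal is correct and follows the paper's proof essentially step by step. For $(i)\Rightarrow(ii)$ you invoke the Lorist--Nieraeth sparse characterization (Lemma~\ref{lemm 20260810-1-noi}). For $(ii)\Rightarrow(i)$ you dualize the finite sparse bound through the definition of $X'$, then combine the finite sparse domination of Lemma~\ref{lem 20260702-4}, the Fatou properties of $X$ and $X'$, an increasing finite exhaustion of each grid, and the $3^n$ adjacent grids, arriving at the same constant $2\cdot 18^n C_S$.
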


\begin{proof}
Assume $(i)$. By the sparse characterization in
\cite[Lemma~3.4]{LoristNieraethCompact2024}, the operators
$\A_{\Sscr}$ are uniformly bounded on $X$ for all sparse collections.
In particular, the estimate in $(ii)$ holds for every finite
$1/2$-sparse subcollection of every dyadic grid.

Conversely, assume $(ii)$. We first show that the same sparse estimate
holds on $X'$. Let $\Sscr$ be a finite $1/2$-sparse subcollection of a
dyadic grid and let $g\in X'$. For every $f\in X$, the finiteness of
$\Sscr$ gives
\begin{align*}
 \int_{\R^n}|f|\,\A_{\Sscr}g
 =\sum_{Q\in\Sscr}\avg{|g|}_Q\int_Q|f|
 =\sum_{Q\in\Sscr}\avg{|f|}_Q\int_Q|g|
 =\int_{\R^n}\A_{\Sscr}f\,|g|.
\end{align*}
Hence, by \eqref{eq 20260702-3} and $(ii)$,
\[
 \int_{\R^n}|f|\,\A_{\Sscr}g
 \leq \norm[X]{\A_{\Sscr}f}\norm[X']{g}
 \leq C_S\norm[X]{f}\norm[X']{g}.
\]
Taking the supremum over all $f\in X$ with $\norm[X]{f}\leq1$ yields
\begin{equation}\label{eq 20260704-1}
 \norm[X']{\A_{\Sscr}g}\leq C_S\norm[X']{g}.
\end{equation}

Fix a dyadic grid $\D$ and a finite subcollection $\Fscr\subset\D$.
Let $Z$ denote either $X$ or $X'$, and let $f\in Z$. By
Lemma~\ref{lem 20260702-2}$(ii)$, the function $f$ is locally integrable.
Lemma~\ref{lem 20260702-4} gives a finite $1/2$-sparse collection
$\Sscr\subset\D$ such that
\[
 \M_{\Fscr}f\leq2\A_{\Sscr}f
 \qquad\text{almost everywhere}.
\]
Using $(ii)$ when $Z=X$ and \eqref{eq 20260704-1} when $Z=X'$, we obtain
\begin{equation}\label{eq 20260704-2}
 \norm[Z]{\M_{\Fscr}f}
 \leq2C_S\norm[Z]{f}.
\end{equation}

Each generation $\D_m$ is countable: its cubes have pairwise disjoint
interiors, and each such interior contains a rational point. Hence
$\D=\bigcup\limits_{m\in\Z}\D_m$ is countable. Choose increasing finite
subcollections $\Fscr_k\subset\D$ such that
$\bigcup\limits_{k=1}^\infty\Fscr_k=\D$. Then
$\M_{\Fscr_k}f\uparrow\M_{\D}f$ pointwise. The Fatou property of $X$
and Lemma~\ref{lem 20260702-associate-fatou} for $X'$ therefore give, from
\eqref{eq 20260704-2},
\[
 \norm[Z]{\M_{\D}f}
 \leq2C_S\norm[Z]{f}
 \qquad(Z=X\text{ or }X').
\]
Finally, let $\D^1,\ldots,\D^{3^n}$ be the adjacent dyadic grids in
\eqref{eq 20260702-18}. By \eqref{eq 20260702-19},
\[
 \norm[Z]{\M f}
 \leq 6^n \sum_{t=1}^{3^n}\norm[Z]{\M_{\D^t}f}
 \leq2\cdot 18^n C_S\norm[Z]{f}
 \qquad(Z=X\text{ or }X').
\]
Thus $\M$ is bounded on both $X$ and $X'$, which proves $(i)$.
\end{proof}

\subsection{Proof of Theorem~\ref{thm 20260701-1}}

We first prove parts $(ii)$, $(iii)$, and $(iv)$. Part $(ii)$ is
Proposition~\ref{prop 20260702-1}. For part $(iii)$, fix
$1<q<\infty$. Lemma~\ref{lem 20260702-3} shows that an estimate valid
for every $N\in\N$ extends, with the same constant, to every sequence
$\{f_j\}_{j=1}^{\infty}$ for which
$\bigl(\sum_{j=1}^{\infty}|f_j|^q\bigr)^{\frac1q}$ belongs to $X$. The converse
follows by adding zero terms. This proves part $(iii)$. Part $(iv)$ is
Proposition~\ref{prop 20260702-2}.

It remains to prove part $(i)$.

\emph{Step 1. $(b)$ $\Longrightarrow$ $(a)$.}
Assume $(b)$. Then there are $q\in(1,\infty)$ and a finite constant $C$
such that \eqref{eq 20260701-3} holds for every $N$, with $C$
independent of $N$. 
If we take $N=1$, then we see that $\M$ is bounded on $X$.
By Corollary~\ref{cor 20260703-1}, the operators
$\A_{\Sscr}$ are uniformly bounded on $X$ over all finite
$1/2$-sparse subcollections of all dyadic grids.
Proposition~\ref{prop 20260704-1} then shows
$\M$ is bounded on $X'$.
Thus $(a)$
holds.

\emph{Step 2. $(a)$ $\Longrightarrow$ $(c)$.}
Assume $(a)$, and fix $q\in(1,\infty)$. Choose $p_0\in(1,\infty)$. For
every $N\in\N$ and every finite family $\{f_j\}_{j=1}^N$ of measurable
functions, put
\[
 G=\left(\sum_{j=1}^N(\M f_j)^q\right)^{\frac1q},
 \qquad
 F=\left(\sum_{j=1}^N|f_j|^q\right)^{\frac1q}.
\]
The weighted inequality in Theorem~\ref{thm 20260702-1} verifies the
hypothesis of Theorem~\ref{thm 20260702-2}, with a bound independent of
$N$. Applying that theorem to the collection of all pairs $(G,F)$ gives
\eqref{eq 20260701-3} on $X$. Since $q\in(1,\infty)$ was arbitrary,
$(c)$ follows.

\emph{Step 3. $(c)$ $\Longrightarrow$ $(b)$.}
This implication is immediate.

This completes the proof of Theorem~\ref{thm 20260701-1}.

\section{Proof of Theorem~\ref{thm 20260701-2}}\label{sec 20260705-1}

Theorem~\ref{thm 20260701-2} is proved by passing from the ball quasi-Banach function space
$Y$ to the rescaled space $X=Y^r$. We first recall the properties of
this rescaling that are needed in the proof.

\subsection{Power rescaling}

We recall the $r$-concavification of a ball quasi-Banach function space $Y$.
\begin{definition}\label{def 20260705-1}
Let $Y$ be a ball quasi-Banach function space and let $r>0$. Define
\begin{equation}\label{eq 20260705-1}
 Y^r
 =\{h:\ |h|^{1/r}\in Y\},
 \qquad
 \norm[Y^r]{h}
 =\norm[Y]{|h|^{1/r}}^r.
\end{equation}
The space $Y^r$, equipped with the quasi-norm in \eqref{eq 20260705-1}, is called the
$r$-concavification of $Y$.
\end{definition}
This is the power notation used in \cite{Nieraeth2023}.

The next lemma records the elementary properties of the power space
$Y^r$ that are used below.

\begin{lemma}\label{lem 20260705-1}
Let $Y$ be a ball quasi-Banach function space and let $r>0$. Then
$Y^r$, equipped with the quasi-norm in \eqref{eq 20260705-1}, is a ball
quasi-Banach function space. For every nonnegative $H\in Y$,
\begin{equation}\label{eq 20260705-2}
 \norm[Y]{H}^r=\norm[Y^r]{H^r}.
\end{equation}
\end{lemma}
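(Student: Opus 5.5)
We have to prove Lemma~\ref{lem 20260705-1}: $Y^r$ is a ball quasi-Banach function space, and $\norm[Y]{H}^r=\norm[Y^r]{H^r}$ for nonnegative $H\in Y$.

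The identity \eqref{eq 20260705-2} comes straight from the definition: for $H\geq0$ we have $|H^r|^{1/r}=H$, so $\norm[Y^r]{H^r}=\norm[Y]{H}^r$. This also shows that $h\mapsto|h|^{1/r}$ maps the nonnegative cone of $Y^r$ bijectively onto that of $Y$. I will use this correspondence to transfer each axiom of Definition~\ref{def 20260702-2} from $Y$ to $Y^r$.

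\emph{The lattice, Fatou and ball axioms.}
\begin{itemize}
\item Lattice property: if $h\in Y^r$ and $|g|\leq|h|$, then $|g|^{1/r}\leq|h|^{1/r}$ because $t\mapsto t^{1/r}$ is increasing. Property $(i)$ for $Y$ then gives $g\in Y^r$ and $\norm[Y^r]{g}\leq\norm[Y^r]{h}$.
\item Fatou property: if $0\leq h_k\uparrow h$ with $\sup_k\norm[Y^r]{h_k}<\infty$, then $h_k^{1/r}\uparrow h^{1/r}$ and $\sup_k\norm[Y]{h_k^{1/r}}<\infty$. Property $(ii)$ for $Y$ gives $h^{1/r}\in Y$ with $\norm[Y]{h^{1/r}}=\sup_k\norm[Y]{h_k^{1/r}}$. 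Raising to the power $r$, which is continuous and increasing, yields $(ii)$ for $Y^r$.
\item Balls: $\one_B^{1/r}=\one_B\in Y$, so $\one_B\in Y^r$.
\end{itemize}

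\emph{The quasi-norm and vector space structure.} Homogeneity holds since $\norm[Y^r]{\lambda h}=\norm[Y]{|\lambda|^{1/r}|h|^{1/r}}^r=|\lambda|\norm[Y^r]{h}$. Definiteness follows from definiteness in $Y$. For the quasi-triangle inequality I would first use the elementary bound
\[
 |h_1+h_2|^{1/r}\leq c_r\bigl(|h_1|^{1/r}+|h_2|^{1/r}\bigr),
 \qquad c_r=\max\{1,2^{1/r-1}\}.
\]
Applying the lattice property and the quasi-triangle inequality of $Y$ (with constant $K_Y$) gives
\[
 \norm[Y^r]{h_1+h_2}\leq \bigl(c_rK_Y\bigr)^r\bigl(\norm[Y]{|h_1|^{1/r}}+\norm[Y]{|h_2|^{1/r}}\bigr)^r.
\]
Finally, $(a+b)^r\leq \max\{1,2^{r-1}\}(a^r+b^r)$ bounds the right-hand side by a constant times $\norm[Y^r]{h_1}+\norm[Y^r]{h_2}$. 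This also shows $Y^r$ is closed under addition, hence is a linear space.

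\emph{Completeness, the main obstacle.} The map $h\mapsto|h|^{1/r}$ is not linear, so completeness cannot simply be transported. Instead I would use the Riesz--Fischer type criterion for quasi-normed lattices with the Fatou property. By the Aoki--Rolewicz theorem, choose $p\in(0,1]$ and an equivalent $p$-norm on $Y^r$. Given a Cauchy sequence, pass to a subsequence with $\sum_k\norm{h_{k+1}-h_k}^p<\infty$. The partial sums of $\sum_k|h_{k+1}-h_k|$ have uniformly bounded quasi-norm, so the Fatou property shows this series belongs to $Y^r$. Consequently the series converges absolutely almost everywhere, which defines the limit $h$. Applying Fatou once more to the tails $\sum_{k\geq j}|h_{k+1}-h_k|$ gives $\norm[Y^r]{h-h_j}\to0$. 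Since a Cauchy sequence with a convergent subsequence converges, $Y^r$ is complete. The only delicate point is tracking the quasi-triangle constants in the $p$-norm renormalization; this is routine.
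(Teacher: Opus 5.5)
Your proof is correct, but it takes a different route from the paper. The paper does not check the axioms for $Y^r$ by hand. It notes that $Y$ is saturated, by properties $(i)$ and $(iii)$ of Definition~\ref{def 20260702-2}, and then cites the standard concavification result \cite[p.~12]{Nieraeth2023} to conclude that $Y^r$ is a quasi-Banach function space. It verifies only two things separately: the ball-indicator property via $\one_B^{1/r}=\one_B$, and the identity \eqref{eq 20260705-2}, directly from the definition.

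You instead proceed directly:
\begin{enumerate}
\item[$(1)$] You transport the lattice, Fatou and ball axioms through the order-preserving bijection $h\mapsto|h|^{1/r}$ between the positive cones of $Y^r$ and $Y$.
\item[$(2)$] You prove the quasi-triangle inequality with the explicit constant $\max\{1,2^{r-1}\}(c_rK_Y)^r$.
\item[$(3)$] You obtain completeness from the Riesz--Fischer argument, using Aoki--Rolewicz together with the Fatou property of $Y^r$ itself.
\end{enumerate}

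Your version is self-contained and never uses saturation. It also isolates the one genuinely nontrivial point: completeness cannot be transported along the nonlinear map $h\mapsto|h|^{1/r}$ and has to come from the Fatou property. The paper's version is shorter but delegates exactly this point to the reference.

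Two small remarks on your completeness step. First, concluding that $\sum_k|h_{k+1}-h_k|$ converges almost everywhere uses that elements of $Y^r$ are finite almost everywhere. This follows from the lattice property, homogeneity and definiteness: if the sum $S$ were infinite on a set $E$ with $|E|>0$, then $n\norm[Y^r]{\one_E}\leq\norm[Y^r]{S}$ for every $n$, so $\norm[Y^r]{\one_E}=0$, a contradiction. Second, Aoki--Rolewicz can be avoided. Choose the subsequence with $\norm[Y^r]{h_{k+1}-h_k}\leq(2K)^{-k}$, where $K$ is the quasi-triangle constant of $Y^r$, and iterate the quasi-triangle inequality. This bounds the partial sums by $\sum_k2^{-k}$.
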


\begin{proof}
By Definition~\ref{def 20260702-2}$(i)$,$(iii)$, $Y$ has the saturation
property. Hence the standard concavification result in
\cite[p.~12]{Nieraeth2023} applies and shows that $Y^r$, equipped with
the quasi-norm in \eqref{eq 20260705-1}, is a quasi-Banach function
space. Since $\one_B^{1/r}=\one_B$ for every ball $B$, the
ball-indicator property is inherited from $Y$. Finally,
\eqref{eq 20260705-2} follows directly from
\eqref{eq 20260705-1}.
\end{proof}

We also recall the $r$-convexity of constant one.
\begin{definition}\label{def 20260705-2}
Let $Y$ be a ball quasi-Banach function space and let $r>0$. Following
\cite[Definition~2.7]{Nieraeth2023}, we say that $Y$ is
\emph{$r$-convex with constant one} if
\begin{equation}\label{eq 20260705-4}
 \left\|\bigl(|f|^r+|g|^r\bigr)^{1/r}\right\|_Y
 \leq\bigl(\norm[Y]{f}^r+\norm[Y]{g}^r\bigr)^{1/r}
 \qquad(f,g\in Y).
\end{equation}
\end{definition}

By the standard concavification criterion, $Y$ is $r$-convex with
constant one if and only if the quasi-norm in
\eqref{eq 20260705-1} is a norm on $Y^r$; see
\cite[p.~12]{Nieraeth2023}. Combining this criterion with
Lemma~\ref{lem 20260705-1}, we obtain Lemma \ref{lem 20260705-2} below.
At this stage, the continuous embedding of $Y^r$ into
$L^1_{\mathrm{loc}}(\mathbb R^n)$ has not yet been established.
This will follow from Lemma~\ref{lem 20260705-3} below whenever the maximal
operator is bounded on $Y^r$.

\begin{lemma}\label{lem 20260705-2}
Let $Y$ be an $r$-convex ball quasi-Banach function space with
constant one, and put $X=Y^r$ with the quasi-norm in
\eqref{eq 20260705-1}. Then this quasi-norm is actually a norm, and $X$ is a
Banach lattice of measurable functions with the Fatou property and containing
the indicator of every ball.
\end{lemma}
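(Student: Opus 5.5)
The statement to be proved is Lemma~\ref{lem 20260705-2}: $\|\cdot\|_X$ is a norm, and $X$ is a Banach lattice with the Fatou property containing every ball indicator. The plan is to verify these properties one at a time, transferring each from $Y$ through the identity $\|h\|_X=\|\,|h|^{1/r}\|_Y^r$. Lemma~\ref{lem 20260705-1} already shows that $X$ is a ball quasi-Banach function space. That lemma supplies the lattice property, the Fatou property (Definition~\ref{def 20260702-2}$(ii)$), completeness as a quasi-Banach space, and $\one_B\in X$ for every ball $B$. So the only genuinely new point is the triangle inequality. Positive homogeneity is immediate: $\|\lambda h\|_X=\|\,|\lambda|^{1/r}|h|^{1/r}\|_Y^r=|\lambda|\,\|h\|_X$. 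Definiteness is inherited from $Y$.

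For the triangle inequality, take $h_1,h_2\in X$ and put $f=|h_1|^{1/r}$ and $g=|h_2|^{1/r}$, which lie in $Y$. Pointwise $|h_1+h_2|\le |h_1|+|h_2|=|f|^r+|g|^r$. Hence
\[
 |h_1+h_2|^{1/r}\le\bigl(|f|^r+|g|^r\bigr)^{1/r}.
\]
By the lattice property of $Y$ and the convexity estimate \eqref{eq 20260705-4},
\[
 \|h_1+h_2\|_X=\bigl\|\,|h_1+h_2|^{1/r}\bigr\|_Y^r
 \le\bigl\|(|f|^r+|g|^r)^{1/r}\bigr\|_Y^r
 \le\|f\|_Y^r+\|g\|_Y^r=\|h_1\|_X+\|h_2\|_X .
\]
One small point needs checking here. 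If $Y$ were only known to satisfy \eqref{eq 20260705-4} for $f,g\in Y$, then we must ensure that $(|f|^r+|g|^r)^{1/r}\in Y$. This follows from the quasi-triangle inequality, since $(|f|^r+|g|^r)^{1/r}\le c_r(|f|+|g|)$. So the estimate is meaningful, and $h_1+h_2\in X$.

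It then remains to record completeness as a normed space. A quasi-Banach space whose quasi-norm happens to be a norm is a Banach space, because Cauchy sequences and limits are the same for the two notions. Alternatively, one can rerun the Riesz--Fischer argument directly from the Fatou property: a normed lattice with the Fatou property is complete, using absolutely summable series together with the monotone convergence of partial sums of absolute values. The Fatou property and the ball indicators carry over verbatim, since $\one_B^{1/r}=\one_B$ and since $0\le h_k\uparrow h$ gives $h_k^{1/r}\uparrow h^{1/r}$, so that $\|h\|_X=\sup_k\|h_k\|_X$.

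I expect no real obstacle; the only delicate step is the triangle inequality, which is exactly where the constant-one hypothesis in \eqref{eq 20260705-4} is used. The one bookkeeping issue is to make sure that membership in $Y$ of the intermediate function is justified before \eqref{eq 20260705-4} is applied.
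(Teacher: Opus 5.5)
Your proposal is correct and follows the paper's structure. Lemma~\ref{lem 20260705-1} supplies the lattice, Fatou, completeness and ball-indicator properties, and $r$-convexity with constant one gives the norm property. The only difference is that you verify the triangle inequality directly, via $|h_1+h_2|^{1/r}\le(|f|^r+|g|^r)^{1/r}$ and \eqref{eq 20260705-4}, where the paper cites the standard concavification criterion from Nieraeth's paper; this makes your argument self-contained.
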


\begin{proof}
By the standard concavification criterion
\cite[p.~12]{Nieraeth2023}, the quasi-norm in
\eqref{eq 20260705-1} is a norm on $Y^r$ because $Y$ is $r$-convex.
The remaining assertions follow from
Lemma~\ref{lem 20260705-1}.
\end{proof}

\subsection{Local integrability and powered operators}

Comparing Definitions \ref{def 20260702-1} and \ref{def 20260702-2}, we see that, for a ball quasi-Banach function space $Z$ to be a ball Banach function space, it is necessary, in addition to the triangle inequality for the norm, that every element of $Z$ belong to $L^1_{\mathrm{loc}}$.
The following lemma provides a sufficient condition for this property.

\begin{lemma}\label{lem 20260705-3}
Let $Z$ be a ball quasi-Banach function space whose given quasi-norm is a
norm. If $\M$ is bounded on $Z$, 
then $Z$ is continuously embedded into
$L^1(B)$ for every ball $B$. Consequently, $Z$ is a ball Banach
function space. 
\end{lemma}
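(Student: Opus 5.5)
The plan is to extract local integrability from the boundedness of $\M$ by comparing $\M f$ with an average of $|f|$ over a fixed cube. Fix a ball $B$ and choose a cube $Q\supset B$. For $f\in Z$, every $x\in Q$ satisfies $\M f(x)\geq\avg{|f|}_Q$, so
\[
 \avg{|f|}_Q\,\one_Q\leq\M f
\]
pointwise. The lattice property of $Z$ (Definition~\ref{def 20260702-2}$(i)$) and the boundedness of $\M$ then give
\[
 \avg{|f|}_Q\norm[Z]{\one_Q}\leq\norm[Z]{\M f}\leq\norm[Z\to Z]{\M}\,\norm[Z]{f}.
\]
This step tacitly requires $\avg{|f|}_Q<\infty$. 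If $\int_Q|f|=\infty$, then $\M f=\infty$ on $Q$. Since $\M f\in Z$ and $|Q|>0$, we would have $t\one_Q\leq\M f$ for every $t>0$, hence $t\norm[Z]{\one_Q}\leq\norm[Z]{\M f}$ for all $t$. This is impossible once we know $\norm[Z]{\one_Q}>0$, which holds because $\one_Q\neq0$ and the norm is definite on a space where almost-everywhere equal functions are identified. The same observation handles the finite case as well.

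Since $\one_Q\in Z$ (it is the indicator of a bounded set, dominated by the indicator of a ball, so it lies in $Z$ by the lattice and ball properties), we obtain
\[
 \int_B|f|\leq\int_Q|f|\leq\frac{|Q|\,\norm[Z\to Z]{\M}}{\norm[Z]{\one_Q}}\norm[Z]{f},
\]
which is Definition~\ref{def 20260702-1}$(iv)$ with $C_B=|Q|\norm[Z\to Z]{\M}/\norm[Z]{\one_Q}$.

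To conclude that $Z$ is a ball Banach function space, we check the remaining items. Completeness holds because $Z$ is a quasi-Banach space whose quasi-norm is a norm, so it is Banach. The lattice property, the Fatou property, and the presence of ball indicators are inherited directly from Definition~\ref{def 20260702-2}. The only delicate point is the nondegeneracy $\norm[Z]{\one_Q}>0$ together with the exclusion of the infinite-average case, and both follow from the definiteness of the norm as described above. I expect no real obstacle here.
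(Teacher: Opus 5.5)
Your proof is correct, and it reaches the embedding more directly than the paper. The paper uses the saturation property and \cite[Lemma~2.26]{Nieraeth2023}, applied to the basis of axis-parallel cubes, to obtain $\one_Q\in Z'$ for every cube $Q$. It then gets $\int_B|f|\leq\norm[Z]{f}\norm[Z']{\one_{Q_B}}$ from the generalized H\"older inequality.

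You never leave $Z$. The pointwise bound $\avg{|f|}_Q\one_Q\leq\M f$, the lattice property and $\norm[Z]{\one_Q}>0$ give $\int_Q|f|\leq|Q|\,\norm[Z\to Z]{\M}\,\norm[Z]{f}/\norm[Z]{\one_Q}$ at once. You then have to rule out $\int_Q|f|=\infty$ by hand, which the H\"older route settles automatically. Taking the supremum over $\norm[Z]{f}\leq1$ in your estimate gives $\norm[Z]{\one_Q}\norm[Z']{\one_Q}\leq|Q|\,\norm[Z\to Z]{\M}$, so you are in effect reproving the cited fact inline, with an explicit constant.

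The paper's formulation stresses that $Z'$ contains nontrivial elements, which is the point of its comparison with \cite[Lemma~2.4]{MastyloSawano2026}. Your version is self-contained and shows that the embedding uses only homogeneity, the lattice property and definiteness. It therefore holds verbatim for quasi-norms; the norm hypothesis is needed only to upgrade completeness to the Banach property.
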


Remark that $Z$ is a normed function space
(see Definition \ref{def 20260702-1}).
Therefore, its K\"{o}the dual is defined by \eqref{eq 20260702-2}.
\begin{proof}
By Definition~\ref{def 20260702-2}$(i)$,$(iii)$, $Z$ has the saturation
property. We may therefore apply \cite[Lemma~2.26]{Nieraeth2023} to
the basis of axis-parallel cubes. It follows that $\one_Q\in Z'$ for
every axis-parallel cube $Q$. Let $B$ be a ball and choose such a cube
$Q_B$ containing $B$. Then the generalized H\"older inequality gives
\[
 \int_B|f(x)|\,dx
 \leq\int_{Q_B}|f(x)|\,dx
 \leq\norm[Z]{f}\norm[Z']{\one_{Q_B}}
 \qquad(f\in Z).
\]
Thus $Z$ is continuously embedded into $L^1(B)$. Since the given
quasi-norm is a norm, all the remaining axioms in
Definition~\ref{def 20260702-1} are already part of the assumptions on
$Z$.
\end{proof}

It is useful to compare the preceding lemma with an earlier related result.
In \cite[Lemma 2.4]{MastyloSawano2026},
we assumed that $\M$ is bounded on $Z'$.
In both cases, we use the existence of a non-trivial element in $Z'$.

Let $r>0$, let $\Sscr$ be a finite collection of cubes.
We recall again that we defined
$\M_r$ and $\A_{\Sscr,r}$ in \eqref{eq 20260701-6}. 

The assumption $f\in L^r_{\mathrm{loc}}(\R^n)$ means precisely that
$|f|^r\in L^1_{\mathrm{loc}}(\R^n)$, so every cube average in
\eqref{eq 20260701-6} is finite. The function $\M_r f$ may nevertheless
take the value $+\infty$, because the supremum defining $\M$ is taken
over all cubes. In the proof below, any one of the
conditions in part $(i)$ of Theorem~\ref{thm 20260701-2} will imply
$Y\subset L^r_{\mathrm{loc}}(\R^n)$. For conditions $(b)$ and $(d)$ in
that part, this is proved by applying the assumed estimate to bounded compactly
supported truncations.

\subsection{Proof of Theorem~\ref{thm 20260701-2}}

Put $X=Y^r$. By Lemma~\ref{lem 20260705-2}, $X$ is a Banach lattice with the Fatou
property and contains the indicator of every ball.

\emph{Step 1. The case $0<q<\infty$.}
Let $0<q<\infty$, put $a=q/r$, and let
$f_j\in L^r_{\mathrm{loc}}(\R^n)$. Set $h_j=|f_j|^r$. From
\eqref{eq 20260705-1} and \eqref{eq 20260701-6},
\begin{align}
 \left\|\left(\sum_{j=1}^N(\M_r f_j)^q\right)^{\frac1q}\right\|_Y^r
 &=\left\|\left(\sum_{j=1}^N(\M h_j)^a\right)^{1/a}\right\|_X,
 \label{eq 20260705-6}\\
 \left\|\left(\sum_{j=1}^N|f_j|^q\right)^{\frac1q}\right\|_Y^r
 &=\left\|\left(\sum_{j=1}^N|h_j|^a\right)^{1/a}\right\|_X.
 \label{eq 20260705-7}
\end{align}
Thus an estimate on $X$ with exponent $a$ and constant $D$ gives
\eqref{eq 20260701-7} with constant $D^{1/r}$. Conversely,
\eqref{eq 20260701-7} with constant $C$ gives the corresponding estimate
on $X$ with constant $C^r$ for finite families of nonnegative locally
integrable functions for which the quantity on the right is finite.

Assume that \eqref{eq 20260701-7} holds with a constant $C$. We now
remove the local-integrability restriction. Let $h_1,\ldots,h_N$ be
nonnegative measurable functions such that
\[
 H=\left(\sum_{j=1}^N h_j^a\right)^{1/a}\in X.
\]
For $k\in\N$, define
\[
 h_{j,k}=\min\{h_j,k\}\one_{B(0,k)},
 \qquad
 f_{j,k}=h_{j,k}^{1/r}.
\]
Then $f_{j,k}\in Y\cap L^r_{\mathrm{loc}}(\R^n)$ and
$\bigl(\sum_{j=1}^Nh_{j,k}^a\bigr)^{1/a}\leq H$. Hence
\eqref{eq 20260701-7} and
\eqref{eq 20260705-6}--\eqref{eq 20260705-7} give
\[
 \left\|\left(\sum_{j=1}^N(\M h_{j,k})^a\right)^{1/a}\right\|_X
 \leq C^r\norm[X]{H}.
\]
As $k\to\infty$, one has $h_{j,k}\uparrow h_j$ and
$\M h_{j,k}\uparrow\M h_j$. The Fatou property of $X$ therefore gives
\[
 \left\|\left(\sum_{j=1}^N(\M h_j)^a\right)^{1/a}\right\|_X
 \leq C^r
 \left\|\left(\sum_{j=1}^N h_j^a\right)^{1/a}\right\|_X.
\]
Taking $N=1$ shows that $\M$ is bounded on $X$. 
By
Lemma~\ref{lem 20260705-3}, $X$ is a ball Banach function space, and
hence $Y\subset L^r_{\mathrm{loc}}(\R^n)$. If $q\leq r$, then
$a\leq1$, and the last vector-valued estimate contradicts
Proposition~\ref{prop 20260702-1}. Thus $q>r$. Theorem~\ref{thm 20260701-1}
now gives $\M$ is bounded on $X'$,
so condition $(b)$ in part $(i)$ implies
condition $(a)$. The same argument proves part $(ii)$.

Conversely, assume condition $(a)$ in part $(i)$.
Lemma~\ref{lem 20260705-3} shows that $X$ is a
ball Banach function space and therefore that
$Y\subset L^r_{\mathrm{loc}}(\R^n)$. For every $q>r$, one has
$a=q/r>1$. Theorem~\ref{thm 20260701-1} gives
\eqref{eq 20260701-3} on $X$ with exponent $a$ and some finite constant
$D$. The identities \eqref{eq 20260705-6}--\eqref{eq 20260705-7} then
give \eqref{eq 20260701-7} with constant $D^{1/r}$. Hence, within
part $(i)$, $(a)$ implies $(c)$, while $(c)$ implies $(b)$ immediately.

Under the hypothesis of part $(iii)$, part $(ii)$ gives
$Y\subset L^r_{\mathrm{loc}}(\R^n)$. Since $q>r$ by assumption, one has
$a=q/r>1$. For every sequence considered in part $(iii)$,
\[
 |f_j|\leq\left(\sum_{k=1}^\infty|f_k|^q\right)^{\frac1q}\in Y,
\]
so each $f_j$ belongs to $L^r_{\mathrm{loc}}(\R^n)$ and $\M_r f_j$ is
well defined. The argument above gives the finite vector-valued estimate
on $X$ with exponent $a=q/r$ and constant $C^r$.
Theorem~\ref{thm 20260701-1}$(iii)$, together with
\eqref{eq 20260705-6}--\eqref{eq 20260705-7} and increasing partial
sums, therefore gives the estimate in part $(iii)$ with constant $C$.

\emph{Step 2. The sparse condition.}
For every finite sparse collection $\Sscr$ and every
$f\in Y\cap L^r_{\mathrm{loc}}(\R^n)$,
\begin{equation}\label{eq 20260705-9}
 \norm[Y]{\A_{\Sscr,r}f}^r
 =\norm[X]{\A_{\Sscr}(|f|^r)}.
\end{equation}

Assume condition $(a)$ in part $(i)$. Lemma~\ref{lem 20260705-3}
shows that $X$ is a ball Banach
function space, so $Y\subset L^r_{\mathrm{loc}}(\R^n)$.
Proposition~\ref{prop 20260704-1} gives a constant $D>0$, independent of
$\Sscr$, such that
\[
 \norm[X]{\A_{\Sscr}h}\leq D\norm[X]{h}
 \qquad(h\in X).
\]
By \eqref{eq 20260705-9},
\[
 \norm[Y]{\A_{\Sscr,r}f}
 \leq D^{1/r}\norm[Y]{f}
 \qquad(f\in Y).
\]
Hence $(a)$ implies $(d)$ in part $(i)$.

Conversely, assume condition $(d)$ in part $(i)$, and let
$C_{\mathrm{sp}}$ be the constant in
that condition. We first verify local integrability of the elements of
$X$. Let $h\in X$, and for $k\in\N$ define
\[
 h_k=\min\{|h|,k\}\one_{B(0,k)},
 \qquad
 f_k=h_k^{1/r}.
\]
Then $f_k\in Y\cap L^r_{\mathrm{loc}}(\R^n)$. Let $B$ be a ball,
choose an axis-parallel cube $Q_0$ containing $B$, and use
\eqref{eq 20260702-18} to choose a dyadic cube $Q$ such that
$Q_0\subset Q$. The one-cube collection $\{Q\}$ is $1/2$-sparse.
By condition $(d)$ and \eqref{eq 20260705-9},
\[
 \avg{h_k}_Q\norm[X]{\one_Q}
 =\norm[X]{\A_{\{Q\}}h_k}
 =\norm[Y]{\A_{\{Q\},r}f_k}^r
 \leq C_{\mathrm{sp}}^r\norm[Y]{f_k}^r
 \leq C_{\mathrm{sp}}^r\norm[X]{h}.
\]
Letting $k\to\infty$ and using the monotone convergence theorem gives
\[
 \int_B|h(x)|\,dx
 \leq
 \frac{|Q|C_{\mathrm{sp}}^r}{\norm[X]{\one_Q}}\norm[X]{h}.
\]
Thus $X$ is a ball Banach function space and
$Y\subset L^r_{\mathrm{loc}}(\R^n)$. For $h\in X$, apply condition $(d)$
to $f=|h|^{1/r}$. Equation \eqref{eq 20260705-9} gives
\[
 \norm[X]{\A_{\Sscr}h}
 \leq C_{\mathrm{sp}}^r\norm[X]{h}
\]
for every finite $1/2$-sparse subcollection $\Sscr$ of every dyadic
grid. Proposition~\ref{prop 20260704-1} now gives
$\M$ is bounded on both $X$ and $X'$.
Hence $(d)$ implies $(a)$ in part $(i)$.

\emph{Step 3. The supremum estimate.}
Assume that $Y\subset L^r_{\mathrm{loc}}(\R^n)$, as in part $(iv)$ of
the theorem. Let $\{f_j\}$ be a finite or countable family
such that
\[
 F:=\sup\limits_j|f_j|\in Y.
\]
For every $j$, one has $|f_j|^r\leq F^r$. Hence the monotonicity of
$\M$ gives
\[
 \M_r f_j
 =\bigl[\M(|f_j|^r)\bigr]^{1/r}
 \leq\bigl[\M(F^r)\bigr]^{1/r}
 =\M_r F.
\]
Therefore
\begin{equation}\label{eq 20260705-10}
 \sup\limits_j\M_r f_j\leq\M_r F
 \qquad\text{pointwise}.
\end{equation}
If $\M_r:Y\to Y$ is bounded and
$\norm[Y]{\M_r f}\leq C\norm[Y]{f}$ for every $f\in Y$, then
\eqref{eq 20260705-10} and the lattice property give
\[
 \norm[Y]{\sup\limits_j\M_r f_j}
 \leq\norm[Y]{\M_r F}
 \leq C\norm[Y]{F}
 =C\norm[Y]{\sup\limits_j|f_j|}.
\]
Conversely, if the supremum estimate holds with a constant $C$, then a
family with one nonzero term gives
\[
 \norm[Y]{\M_r f}\leq C\norm[Y]{f}
 \qquad(f\in Y).
\]
Thus the same constant works in the scalar estimate and in the supremum
estimate. This proves part $(iv)$.

Taking $Y=L^\infty(\R^n)$ shows that boundedness of $\M_r$ on $Y$
need not imply boundedness of $\M$ on $X'=L^1(\R^n)$.

This completes the proof of Theorem~\ref{thm 20260701-2}.

\section{Applications to local maximal operators}
\label{sec 20260812-2-noi}

We consider the local version of Theorems~\ref{thm 20260701-1}
and~\ref{thm 20260701-2}. 
Let $f$ be a measurable function on $\R^n$.
For $\rho\in(0,\infty]$, define
\[
 \Mloc_\rho f(x)
 =\sup\limits_{\substack{Q\ni x\\ \ell(Q)\leq\rho}}
  \avg{|f|}_Q,
\]
where the supremum is taken over all axis-parallel cubes in $\R^n$.
We call $\rho$ the truncation parameter; the case $\rho=\infty$
corresponds to the untruncated operator. When
$\rho=\infty$, that is, when the restriction on $\ell(Q)$ is removed, we have
\[
 \Mloc_\infty=\M.
\]
Furthermore, $\Mloc_1$ is the usual
normalized local maximal operator \cite{Rychkov2001}. 

We organize Section~\ref{sec 20260812-2-noi} as follows.
Section~\ref{subsection:The characterization} states the main results of Section~\ref{sec 20260812-2-noi}.
Section~\ref{subsection:Comparison of finite truncation parameters} compares various finite truncations with different parameters.
Section \ref{subsection:Weighted inequalities and extrapolation at a fixed truncation parameter} collects
inequalities related to $\Mloc_a$.
Section~\ref{section:q<1,infinity} deals with the endpoint cases.
Section~\ref{sec 20260812-3-noi} establishes the sparse criterion for a finite truncation parameter.
Section~\ref{subsection:From a vector-valued bound to sparse averaging} derives a sparse estimate from the vector-valued inequality, and Section~\ref{subsec 0813-1-noi} proves Theorem~\ref{thm 20260812-1-noi}.

\subsection{The characterization}
\label{subsection:The characterization}
Let $X$ be a ball Banach function space and let $X'$ be its K\"othe
associate space. For $0<q<\infty$ and $\rho\in(0,\infty)$ consider
\begin{equation}\label{eq 20260812-3-noi}
 \left\|
 \left(\sum_{j=1}^N(\Mloc_\rho f_j)^q\right)^{1/q}
 \right\|_X
 \leq C
 \left\|
 \left(\sum_{j=1}^N|f_j|^q\right)^{1/q}
 \right\|_X,
\end{equation}
where $C$ is independent of $N\in\N$ and of the finite sequence $\{f_j\}_{j=1}^N$. 

We have the corresponding statement to Theorem \ref{thm 20260701-1}.
\begin{theorem}\label{thm 20260812-1-noi}
Let $\rho\in(0,\infty)$. Let $X$ be a ball Banach function space on
$\R^n$, and let $X'$ be its K\"othe associate space.
\begin{enumerate}
\item[$(i)$] The following statements are equivalent.
\begin{enumerate}
\item[$(a)$] $\Mloc_\rho:X\to X$ and $\Mloc_\rho:X'\to X'$ are bounded.

\item[$(b)$] There exist $q\in(1,\infty)$ and a constant $C>0$ such that, for
 every $N\in\N$ and every finite family $\{f_j\}_{j=1}^N$ of
 measurable functions, the finite vector-valued inequality
\begin{equation}\label{eq 20260812-6-noi}
 \left\|
 \left(\sum_{j=1}^N(\Mloc_\rho f_j)^q\right)^{\frac1q}
 \right\|_X
 \leq C
 \left\|
 \left(\sum_{j=1}^N|f_j|^q\right)^{\frac1q}
 \right\|_X
\end{equation}
 holds whenever the quantity on the right is finite. The constant $C$ is
 independent of $N$ and of the family $\{f_j\}_{j=1}^N$.

\item[$(c)$] For every $q\in(1,\infty)$, there is a constant $C>0$ such that
 \eqref{eq 20260812-6-noi} holds for every $N\in\N$, with $C$ independent
 of $N$ and of the family $\{f_j\}_{j=1}^N$.

\end{enumerate}

\item[$(ii)$] The parameter $q\in(0,\infty)$ must satisfy $q>1$ if there exists
 a finite constant $C$ satisfying the inequality
 \eqref{eq 20260812-6-noi} for every $N\in\N$ and every finite family
 $\{f_j\}_{j=1}^N$ of measurable functions. More precisely, for
 $0<q\leq1$ such an estimate fails on every ball quasi-Banach function
 space on $\R^n$.

\item[$(iii)$] Fix $1<q<\infty$. Suppose that the inequality
 \eqref{eq 20260812-6-noi}, with this value of $q$ and constant $C$, holds
 for every $N\in\N$. Then, for every sequence
 $\{f_j\}_{j=1}^{\infty}$ satisfying
 \[
 \left(\sum_{j=1}^{\infty}|f_j|^q\right)^{\frac1q}\in X,
 \]
 one has
\begin{equation}\label{eq 20260812-7-noi}
 \left\|
 \left(\sum_{j=1}^{\infty}(\Mloc_\rho f_j)^q\right)^{\frac1q}
 \right\|_X
 \leq C
 \left\|
 \left(\sum_{j=1}^{\infty}|f_j|^q\right)^{\frac1q}
 \right\|_X.
\end{equation}
 Conversely, this estimate for all sequences implies
 \eqref{eq 20260812-6-noi} by taking $f_j=0$ for $j>N$, with the same
 constant $C$.

\item[$(iv)$] The following statements are equivalent. The constant $C$ may be
 chosen to be the same in $(a)$--$(c)$.
\begin{enumerate}
\item[$(a)$] There is a constant $C>0$ such that
 \[
 \|\Mloc_\rho f\|_X\leq C\|f\|_X
 \qquad(f\in X).
 \]

\item[$(b)$] There is a constant $C>0$ such that, for every $N\in\N$ and every
 finite family $\{f_j\}_{j=1}^N$ satisfying
 $\max\limits_{1\leq j\leq N}|f_j|\in X$,
 \[
 \left\|\max\limits_{1\leq j\leq N}\Mloc_\rho f_j\right\|_X
 \leq C
 \left\|\max\limits_{1\leq j\leq N}|f_j|\right\|_X.
 \]

\item[$(c)$] There is a constant $C>0$ such that, for every sequence
 $\{f_j\}_{j=1}^{\infty}$ with $\sup\limits_{j \in{\mathbb N}}|f_j|\in X$,
 \[
 \left\|\sup\limits_{j \in{\mathbb N}}\Mloc_\rho f_j\right\|_X
 \leq C
 \left\|\sup\limits_{j \in{\mathbb N}}|f_j|\right\|_X.
 \]
\end{enumerate}
These equivalent conditions need not imply that $\Mloc_\rho$ is bounded on $X'$.
\end{enumerate}
\end{theorem}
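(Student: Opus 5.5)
I would mirror the proof of Theorem~\ref{thm 20260701-1}, replacing $\M$ by $\Mloc_\rho$ throughout and checking at each step that only cubes of side length at most a fixed multiple of $\rho$ are used.

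Parts $(iii)$ and $(iv)$ are routine. Part $(iii)$ follows the proof of Lemma~\ref{lem 20260702-3}: apply the finite estimate to partial sums and pass to the limit with the Fatou property, since $\Mloc_\rho$ is monotone. Part $(iv)$ follows Proposition~\ref{prop 20260702-2}, using $\sup_j\Mloc_\rho f_j\le\Mloc_\rho(\sup_j|f_j|)$. For the final counterexample we again take $X=L^\infty$. Then $\Mloc_\rho$ is bounded on $L^\infty$ but not on $L^1$, because $\Mloc_\rho\one_{[0,\epsilon)^n}\gtrsim\epsilon^n|x|^{-n}$ on a fixed ball, which gives a logarithmic divergence.

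Part $(ii)$ uses the construction of Proposition~\ref{prop 20260702-1} after a dilation. Replace $Q=[0,1)^n$ by $[0,\rho/3)^n$. Every cube $R_{\nu,x}$ there has side length $\delta\le (M+2)/L\le 1$ in the unit picture; after scaling, its side is at most $\rho$, so it is admissible for $\Mloc_\rho$. The same lower bounds then give the contradiction.

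Part $(i)$ is the main work. For $(b)\Rightarrow(a)$, boundedness of $\Mloc_\rho$ on $X$ follows from $N=1$. In Proposition~\ref{prop 20260703-1}, for a finite $\eta$-sparse dyadic collection $\Sscr$ consisting of cubes of side at most $\rho$, we have $\Mloc_\rho h_Q\ge\eta\avg{f}_Q$ on $Q$. We also have $(\sum|h_Q|^q)^{1/q}\le\Mloc_\rho f$. This gives $\|\T_{\Sscr,q}\|\le\eta^{-1}C^2$. Theorem~\ref{thm 20260703-1} is purely combinatorial, so the argument of Corollary~\ref{cor 20260703-1} bounds $\A_{\Sscr}$ uniformly over such "small" sparse families.

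It remains to prove a local analogue of Proposition~\ref{prop 20260704-1}: uniform bounds for $\A_{\Sscr}$ on $X$ over finite $1/2$-sparse dyadic families of side $\le\rho$ (or $\le c\rho$) imply that $\Mloc_\rho$ is bounded on $X$ and on $X'$. The duality step transfers verbatim to $X'$. Lemma~\ref{lem 20260702-4} preserves the side restriction, since selected cubes lie in $\Fscr$. Covering by adjacent grids enlarges a cube of side $\le\rho$ to a dyadic cube of side $\le 6\rho$. Thus one gets control by sparse families with sides $\le 6\rho$. This requires the sparse bound at scale $6\rho$ rather than $\rho$.

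The main obstacle is exactly this comparison of truncation parameters. I expect to show that boundedness of the vector-valued or sparse estimates at scale $\rho$ implies them at scale $6\rho$, using a bound of the form $\Mloc_{2\rho}f\lesssim_n \Mloc_\rho f$ applied to suitably translated pieces. The first attempt would be pointwise. A cube of side $2\rho$ is covered by $2^n$ cubes of side $\rho$, so $\avg{|f|}_{Q}\le 2^{-n}\sum_i\avg{|f|}_{Q_i}$. Each $Q_i$ lies within distance $2\rho$ of $x$, however, not containing $x$. To fix this, enlarge the scale-$\rho$ sparse family differently. Alternatively, iterate $\Mloc_\rho\circ\Mloc_\rho\gtrsim\Mloc_{2\rho}$ pointwise, since $\Mloc_\rho f\ge c\,\avg{|f|}_{Q_i}$ on a positive proportion of a neighbouring $\rho$-cube. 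Finitely many compositions then give $\Mloc_{6\rho}\lesssim(\Mloc_\rho)^k$, hence boundedness on $X$ and $X'$ at scale $6\rho$ from scale $\rho$, and conversely by monotonicity in $\rho$.

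For $(a)\Rightarrow(c)$, I would use extrapolation with local weights. I would invoke a local Andersen--John inequality for $A_p^{\mathrm{loc}}$ weights together with a local version of Theorem~\ref{thm 20260702-2}, building the Rubio de Francia algorithm from $\Mloc_\rho$ on $X$ and $X'$. Alternatively, I would derive it from the sparse bound and sparse domination of $\Mloc_\rho$. Finally, $(c)\Rightarrow(b)$ is immediate.
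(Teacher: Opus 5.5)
Your plan follows the paper's proof. Parts $(ii)$--$(iv)$ come from rescaling or repeating the global arguments. For $(b)\Rightarrow(a)$ you transfer the vector-valued estimate from $\Mloc_\rho$ to $\Mloc_{6\rho}$ through a pointwise bound $\Mloc_{6\rho}\lesssim(\Mloc_\rho)^k$ (Lemma~\ref{lem 20260812-2-noi}, Corollary~\ref{cor 20260812-1-noi}). You then use the sparse $\ell^q$ bound, the binomial iteration and the sparse criterion on $\D(6\rho)$ (Propositions~\ref{prop 20260812-7-noi} and~\ref{prop 20260812-6-noi}). For $(a)\Rightarrow(c)$ you use Rychkov's local weighted inequality and a Rubio de Francia algorithm built from $\Mloc_\rho$ (Propositions~\ref{prop 20260812-1-noi} and~\ref{prop 20260812-2-noi}).

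One intermediate claim is false as stated: $\Mloc_\rho\circ\Mloc_\rho\gtrsim\Mloc_{2\rho}$ does not hold pointwise. Take $n=1$, $f=\one_{[0,\epsilon)}$ with $0<\epsilon<\rho/4$, and $x=2\rho-\epsilon$.

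\begin{itemize}
\item On one side, $\Mloc_{2\rho}f(x)\geq\avg{f}_{[0,2\rho)}=\epsilon/(2\rho)$.
\item On the other side, $\Mloc_\rho f$ vanishes on $[\rho+\epsilon,\infty)$ and is at most $\epsilon/(\rho-2\epsilon)$ on $(\rho-\epsilon,\rho+\epsilon)$.
\item Every interval of length at most $\rho$ containing $x$ lies in $(\rho-\epsilon,\infty)$. If it meets $(\rho-\epsilon,\rho+\epsilon)$, its length exceeds $\rho-2\epsilon$.
\end{itemize}

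Hence $(\Mloc_\rho)^2f(x)\leq2\epsilon^2/(\rho-2\epsilon)^2$, which is $o(\epsilon)$. The reason is that one application of $\Mloc_\rho$ spreads a fixed fraction of the mass of $\one_{Q_i}$ by only about half a side length. Two compositions therefore cannot reach the far corner of $Q$.

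The repair is to use three compositions per doubling. Let $\rho<\ell(Q)\leq2\rho$ and let $Q_i$ be the $2^n$ halves of $Q$.
\begin{itemize}
\item First, $\Mloc_\rho f\geq\avg{|f|}_{Q_i}\one_{Q_i}$.
\item Next, $\Mloc_\rho\one_{Q_i}\geq2^{-n}$ on the concentric double of $Q_i$.
\item One more application gives $(\Mloc_\rho)^2\one_{Q_i}\geq4^{-n}$ on the concentric triple of $Q_i$, which contains $Q$.
\end{itemize}
Combining these with $\avg{|f|}_Q\leq\max_i\avg{|f|}_{Q_i}$ gives $\Mloc_{2\rho}f\leq4^n(\Mloc_\rho)^3f$. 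Cubes of side at most $\rho$ are handled trivially. Iterating yields $\Mloc_{6\rho}\leq\Mloc_{8\rho}\lesssim(\Mloc_\rho)^{27}$. With this correction, or by quoting the pointwise comparison as the paper does, the rest of your argument goes through unchanged.
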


We have the corresponding statement to Theorem \ref{thm 20260701-2}.
To formulate the powered result, let $Y$ be a ball quasi-Banach function
space and let $r>0$.
Recall that $Y^r$ is defined by \eqref{eq 20260701-5}
and that $Y$ is $r$-convex with constant one if \eqref{eq 20260705-4} holds.
Under this assumption, $X=Y^r$ is a Banach lattice with the Fatou property.

For $\rho\in(0,\infty]$, define
\begin{equation}\label{eq 20260812-10-noi}
 \Mloc_{\rho,r}f
 =\bigl[\Mloc_\rho(|f|^r)\bigr]^{1/r}.
\end{equation}

\begin{theorem}
\label{thm 20260812-2-noi}
Let $\rho\in(0,\infty)$. Let $Y$ be an $r$-convex ball quasi-Banach
function space with convexity constant one for some $r>0$, and put
$X=Y^r$ with the norm in \eqref{eq 20260701-5}.
\begin{enumerate}
\item[$(i)$] The following statements are equivalent.
\begin{enumerate}
\item[$(a)$] $\Mloc_\rho:X\to X$ and $\Mloc_\rho:X'\to X'$ are bounded.

\item[$(b)$] There exist $q>r$ and a constant $C>0$ such that, for every
 $N\in\N$ and every finite family
 $\{f_j\}_{j=1}^N\subset L^r_{\mathrm{loc}}(\R^n)$,
\begin{equation}\label{eq 20260812-11-noi}
 \left\|
 \left(\sum_{j=1}^N({{\Mloc_{\rho,r}}}f_j)^q\right)^{\frac1q}
 \right\|_Y
 \leq C
 \left\|
 \left(\sum_{j=1}^N|f_j|^q\right)^{\frac1q}
 \right\|_Y
\end{equation}
 whenever the quantity on the right is finite. The constant $C$ is
 independent of $N$ and of the family $\{f_j\}_{j=1}^N$.

\item[$(c)$] For every $q>r$, there is a constant $C>0$ such that
 \eqref{eq 20260812-11-noi} holds for every $N\in\N$ and every finite
 family $\{f_j\}_{j=1}^N\subset L^r_{\mathrm{loc}}(\R^n)$ for which the
 quantity on the right is finite, with $C$ independent of $N$ and of the
 family.

\end{enumerate}
Each of these conditions implies $Y\subset L^r_{\mathrm{loc}}(\R^n)$.
Moreover, for $q>r$, a constant $C$ works in
\eqref{eq 20260812-11-noi} if and only if $C^r$ works in
\eqref{eq 20260812-6-noi} on $X$ with $q/r$ in place of $q$.

\item[$(ii)$] Let $q>0$. Suppose that there is a finite constant $C$ such that
 \eqref{eq 20260812-11-noi} holds for every $N\in\N$ and every finite
 family $\{f_j\}_{j=1}^N\subset L^r_{\mathrm{loc}}(\R^n)$ for which the
 quantity on the right is finite, with $C$ independent of $N$ and of the
 family. Then necessarily $q>r$ and
 $Y\subset L^r_{\mathrm{loc}}(\R^n)$. In particular, no estimate in
 \eqref{eq 20260812-11-noi} can hold for every $N\in\N$ when
 $0<q\leq r$.

\item[$(iii)$] Fix $q>r$ and suppose that \eqref{eq 20260812-11-noi}, with constant
 $C$, holds for every $N\in\N$. Then, for every sequence
 $\{f_j\}_{j=1}^{\infty}$ satisfying
 $\bigl(\sum_{j=1}^{\infty}|f_j|^q\bigr)^{\frac1q}\in Y$, one has
\begin{equation}\label{eq 20260812-13-noi}
 \left\|
 \left(\sum_{j=1}^{\infty}({{\Mloc_{\rho,r}}}f_j)^q\right)^{\frac1q}
 \right\|_Y
 \leq C
 \left\|
 \left(\sum_{j=1}^{\infty}|f_j|^q\right)^{\frac1q}
 \right\|_Y.
\end{equation}
 Conversely, this estimate for all sequences implies
 \eqref{eq 20260812-11-noi} by taking $f_j=0$ for $j>N$, with the same
 constant $C$.

\item[$(iv)$] Suppose that $Y\subset L^r_{\mathrm{loc}}(\R^n)$. The following
 statements are equivalent. The constant $C$ may be chosen to be the same
 in $(a)$--$(c)$.
\begin{enumerate}
\item[$(a)$] There is a constant $C>0$ such that
 \[
 \|{{\Mloc_{\rho,r}}}f\|_Y\leq C\|f\|_Y
 \qquad(f\in Y).
 \]

\item[$(b)$] There is a constant $C>0$ such that, for every $N\in\N$ and every
 finite family $\{f_j\}_{j=1}^N$ satisfying
 $\max\limits_{1\leq j\leq N}|f_j|\in Y$,
 \[
 \left\|\max\limits_{1\leq j\leq N}{{\Mloc_{\rho,r}}}f_j\right\|_Y
 \leq C
 \left\|\max\limits_{1\leq j\leq N}|f_j|\right\|_Y.
 \]

\item[$(c)$] There is a constant $C>0$ such that, for every sequence
 $\{f_j\}_{j=1}^{\infty}$ with $\sup\limits_{j \in{\mathbb N}}|f_j|\in Y$,
 \[
 \left\|\sup\limits_{j \in{\mathbb N}}{{\Mloc_{\rho,r}}}f_j\right\|_Y
 \leq C
 \left\|\sup\limits_{j \in{\mathbb N}}|f_j|\right\|_Y.
 \]
\end{enumerate}
\end{enumerate}
\end{theorem}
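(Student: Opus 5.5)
We reduce Theorem~\ref{thm 20260812-2-noi} to Theorem~\ref{thm 20260812-1-noi} exactly as Theorem~\ref{thm 20260701-2} was reduced to Theorem~\ref{thm 20260701-1}. Put $X=Y^r$. By Lemma~\ref{lem 20260705-2}, $X$ is a Banach lattice with the Fatou property containing all ball indicators. For $f_j\in L^r_{\mathrm{loc}}(\R^n)$ set $h_j=|f_j|^r$ and $a=q/r$. Since $\Mloc_{\rho,r}f_j=[\Mloc_\rho h_j]^{1/r}$, the definition \eqref{eq 20260701-5} gives the two identities analogous to \eqref{eq 20260705-6}--\eqref{eq 20260705-7}:
\[
 \Bigl\|\Bigl(\sum_{j}(\Mloc_{\rho,r}f_j)^q\Bigr)^{1/q}\Bigr\|_Y^r
 =\Bigl\|\Bigl(\sum_j(\Mloc_\rho h_j)^a\Bigr)^{1/a}\Bigr\|_X,
\]
and similarly for the right-hand sides. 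Hence \eqref{eq 20260812-11-noi} with constant $C$ is equivalent to \eqref{eq 20260812-6-noi} on $X$ with exponent $a$ and constant $C^r$, at least for nonnegative locally integrable $h_j$. This already yields the final sentence of part $(i)$.

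\textbf{Parts $(i)$ and $(ii)$.} First remove the local integrability restriction by truncation. Given $h_j\ge0$ with $H=(\sum h_j^a)^{1/a}\in X$, put $h_{j,k}=\min\{h_j,k\}\one_{B(0,k)}$. The estimate for $h_{j,k}$ holds, and $\Mloc_\rho h_{j,k}\uparrow\Mloc_\rho h_j$ by monotone convergence inside each average, so the Fatou property of $X$ passes the estimate to $h_j$. Taking $N=1$ shows that $\Mloc_\rho$ is bounded on $X$.

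The main obstacle is local integrability, because Lemma~\ref{lem 20260705-3} is stated for $\M$, not $\Mloc_\rho$. I would argue directly, as in Lemma~\ref{lem 20260702-3}. If $\int_Q h=\infty$ for a cube $Q$ of side $\le\rho$, then $\Mloc_\rho h=\infty$ on $Q$, which is impossible. Every ball is covered by finitely many such cubes, so $X\subset L^1_{\mathrm{loc}}$. For the continuous embedding needed in Definition~\ref{def 20260702-1}$(iv)$: for a cube $Q$ with $\ell(Q)\le\rho$, we have $\avg{|h|}_Q\one_Q\le\Mloc_\rho h$, so $\avg{|h|}_Q\norm[X]{\one_Q}\le C\norm[X]{h}$. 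Finitely many such cubes cover any ball, so $X$ is a ball Banach function space and $Y\subset L^r_{\mathrm{loc}}$.

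If $q\le r$, then $a\le1$, and Theorem~\ref{thm 20260812-1-noi}$(ii)$ gives a contradiction. Hence $q>r$, which proves $(ii)$. Theorem~\ref{thm 20260812-1-noi}$(i)$ then gives $(b)\Rightarrow(a)$.

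Conversely, assume $(a)$. The same cube argument, applied with the boundedness of $\Mloc_\rho$ on $X$, shows that $X$ is a ball Banach function space. For $q>r$, Theorem~\ref{thm 20260812-1-noi} gives \eqref{eq 20260812-6-noi} on $X$ with exponent $a>1$ and some constant $D$, hence \eqref{eq 20260812-11-noi} with constant $D^{1/r}$. This is $(a)\Rightarrow(c)$, and $(c)\Rightarrow(b)$ is trivial.

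\textbf{Part $(iii)$.} By part $(ii)$ we have $q>r$ and $Y\subset L^r_{\mathrm{loc}}$. The finite estimate on $X$ with constant $C^r$ and Theorem~\ref{thm 20260812-1-noi}$(iii)$ give the series estimate on $X$. The identities above then transfer it back to $Y$ with constant $C$. Alternatively, one can use increasing partial sums and the Fatou property of $Y$ directly. The converse follows by adding zero terms.

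\textbf{Part $(iv)$.} Repeat Step~3 of the proof of Theorem~\ref{thm 20260701-2}. The pointwise bound $\sup_j\Mloc_{\rho,r}f_j\le\Mloc_{\rho,r}(\sup_j|f_j|)$ follows from monotonicity of $\Mloc_\rho$. Together with the lattice property it gives $(a)\Rightarrow(b),(c)$ with the same constant, and families with a single nonzero term give the converses.
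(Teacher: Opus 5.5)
Your proposal is correct and follows the route the paper indicates: Theorem~\ref{thm 20260812-2-noi} is obtained by repeating the rescaling proof of Theorem~\ref{thm 20260701-2}. That proof uses the identities for $h_j=|f_j|^r$, $a=q/r$, truncation with the Fatou property of $X=Y^r$, and then Theorem~\ref{thm 20260812-1-noi} together with Proposition~\ref{prop 20260812-4-noi}. The only deviation is that you get $X\hookrightarrow L^1(B)$ directly from $\avg{|h|}_Q\one_Q\leq\Mloc_\rho h$ for cubes with $\ell(Q)\leq\rho$ and the lattice property, instead of through Lemma~\ref{lem 20260812-3-noi}, which passes through the K\"othe dual; this is a valid and more self-contained shortcut.
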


\begin{remark}
\label{rem 20260812-1-noi}
Let $a,b\in(0,\infty)$. Replacing the truncation parameter $a$ by $b$
does not change any of the boundedness or vector-valued properties in
Theorems~\ref{thm 20260812-1-noi} and~\ref{thm 20260812-2-noi}. More
precisely, each statement involving $\Mloc_a$ holds if and only if the
corresponding statement involving $\Mloc_b$ holds; in the powered case,
the same assertion holds with ${{\Mloc_{a,r}}}$ and ${{\Mloc_{b,r}}}$. The
constants may depend on $a$ and $b$. This equivalence concerns only finite
truncation parameters and does not compare these operators with the global
Hardy--Littlewood maximal operator $\M=\Mloc_\infty$. Consequently,
boundedness for one, and hence every, finite truncation parameter does not
by itself imply boundedness of $\M$.
\end{remark}

The power rescaling underlying Theorem~\ref{thm 20260812-2-noi} is
particularly simple. If $h_j=|f_j|^r$ and $a=q/r$, then
\begin{align}
 \left\|
 \left(\sum_{j=1}^N({{\Mloc_{\rho,r}}}f_j)^q\right)^{1/q}
 \right\|_Y^r
 &=
 \left\|
 \left(\sum_{j=1}^N(\Mloc_\rho h_j)^a\right)^{1/a}
 \right\|_X,
 \label{eq 20260812-14-noi}\\
 \left\|
 \left(\sum_{j=1}^N|f_j|^q\right)^{1/q}
 \right\|_Y^r
 &=
 \left\|
 \left(\sum_{j=1}^Nh_j^a\right)^{1/a}
 \right\|_X.
 \label{eq 20260812-15-noi}
\end{align}
Thus the threshold $q>r$ is exactly the threshold $a>1$ in
Theorem~\ref{thm 20260812-1-noi}.

\subsection{Comparison of finite truncation parameters}
\label{subsection:Comparison of finite truncation parameters}

\begin{lemma}[Pointwise comparison of finite truncation parameters]
\label{lem 20260812-2-noi}
Let $0<a<b<\infty$. There exist $m\in\N$ and
$C=C(n,b/a)>0$ such that
\begin{equation}\label{eq 20260812-18-noi}
 \Mloc_b f(x)\leq C(\Mloc_a)^m f(x)
\end{equation}
for every measurable function $f$ and every $x\in\R^n$, with both sides
understood as extended nonnegative quantities.
\end{lemma}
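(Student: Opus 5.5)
The idea is that one application of $\Mloc_a$ spreads a positive average over a neighbourhood of fixed size. Iterating therefore lets us reach cubes of side up to $b$, at the cost of a constant per step.

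Fix $x$ and a cube $Q\ni x$ with $\ell(Q)\le b$; set $\lambda=\avg{|f|}_Q$. It suffices to show $\lambda\le C(\Mloc_a)^m f(x)$ with $m$ and $C$ depending only on $n$ and $b/a$. If $\ell(Q)\le a$ this is immediate, with $m=1$, provided we note that $\Mloc_a g\ge |g|$ almost everywhere is not needed. Indeed, at the point $x$ itself we iterate using the lower bounds below, which hold at every point, not just almost everywhere.

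\textbf{Step 1.} Partition $Q$ into $k^n$ congruent subcubes $Q_\nu$ of side $\ell(Q)/k\le a$, where $k=\lceil b/a\rceil$. Then $\lambda=k^{-n}\sum_\nu\avg{|f|}_{Q_\nu}$, so
\[
\lambda\le k^{-n}\sum_\nu \inf_{Q_\nu}\Mloc_a f .
\]
Moreover, for every $y\in Q$ the point $y$ and each $Q_\nu$ lie in a common cube. The key lower bound is the following. If $g\ge0$ and $g\ge t$ on a cube $P$ with $\ell(P)\le a$, then for any $y$ at $\ell^\infty$-distance at most $a/2$ from $P$ there is a cube of side $a$ containing $y$ and meeting $P$ in a set of measure $\ge c_n\,|\text{cube}|$. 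For instance, take a cube of side $a$ containing $y$ and a subcube of $P$ of side comparable to $\min(\ell(P),a)$. The trouble is that $\ell(P)$ may be tiny.

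To avoid this, always work with subcubes of side exactly $\ell(Q)/k\in[a/(k+1)\cdot\ldots,\,a]$, which is comparable to $a$ when $\ell(Q)>a$. So in that case all $Q_\nu$ have side in $[a b^{-1}\cdot a/k\ldots]$; more simply, side between $a/(2k)$ and $a$ when $\ell(Q)\in(a,b]$. Then
\[
\Mloc_a(\one_{Q_\nu}t)(y)\ge c_{n,k}\,t
\]
for every $y$ within distance $a/2$ of $Q_\nu$, since a cube of side $a$ containing $y$ captures a fixed fraction of $Q_\nu$.

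\textbf{Step 2 (propagation).} Set $g_1=\Mloc_a f\ge \avg{|f|}_{Q_\nu}=:t_\nu$ on $Q_\nu$. By induction, $(\Mloc_a)^{j}f\ge c_{n,k}^{\,j-1}t_\nu$ on the $\tfrac{(j-1)a}{2}$-neighbourhood of $Q_\nu$. This uses monotonicity of $\Mloc_a$ and the previous step applied to cubes of side comparable to $a$ covering the enlarged neighbourhood. Since $\operatorname{diam}_\infty Q\le b$, taking $m=2k+2$ (so that $(m-1)a/2\ge b$) gives
\[
(\Mloc_a)^m f(x)\ge c^{m-1}\max_\nu t_\nu\ge c^{m-1}\lambda .
\]
For $\ell(Q)\le a$ we instead have $(\Mloc_a)f(x)\ge\lambda$ directly, and the further iterations only lose a factor $c^{m-1}$ by the same propagation. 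This way $m$ is uniform across both cases. Taking the supremum over $Q$ yields \eqref{eq 20260812-18-noi} with $C=c_{n,k}^{-(m-1)}$.

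\textbf{Main obstacle.} The main difficulty is making the propagation step clean. One must check that for a set on which $g\ge t$ that is a cube of side comparable to $a$, every point of its $a/2$-enlargement lies in some cube of side $\le a$ capturing a fixed proportion of it. The enlargement must again contain cubes of side $\sim a$ on which the lower bound holds, so the induction closes. I would handle this by always working with the cube $P_j$ equal to the concentric $(1+j)$-fold dilate intersected with a grid of side-$a/2$ cubes. Every such small cube adjacent to a previous one gets the bound with loss $2^{-n}$.
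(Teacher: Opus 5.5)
Your argument is correct, but it takes a different route from the paper.

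The paper does no geometry here. It quotes from the proof of Proposition~3.3 in \cite{NogayamaSawano2021} the pointwise bound $\Mloc_R h\le C_{n,R}(\Mloc_1)^m h$ with $R=b/a$. It then transfers this to the pair $(a,b)$ by the dilation $g(y)=f(ay)$, via $\Mloc_R g(x/a)=\Mloc_b f(x)$ and $(\Mloc_1)^m g(x/a)=(\Mloc_a)^m f(x)$.

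You prove the estimate from scratch. For $a<\ell(Q)\le b$ you split $Q$ into $k^n$ subcubes $Q_\nu$ of side $\ell(Q)/k\in(a/k,a]$, with $k=\lceil b/a\rceil$. You use $\Mloc_a f\ge\avg{|f|}_{Q_\nu}$ on $Q_\nu$ together with $\avg{|f|}_Q\le\max_\nu\avg{|f|}_{Q_\nu}$. You then push this lower bound outward by $a/2$ per application of $\Mloc_a$, at a fixed multiplicative loss, until it reaches $x$. For $\ell(Q)\le a$ one even has $(\Mloc_a)^j f\ge\avg{|f|}_Q$ on $Q$ for every $j$, so the single choice $m=2k+2$ serves both cases.

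Your route gives explicit $m$ and $C$ and works at every point with extended values. It also does not depend on the precise form in which the external estimate is stated. The paper's route is shorter.

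When you write it up, delete the garbled intermediate ranges in Step~1. Replace the grid device of your last paragraph by the lemma you actually need: if $g\ge s$ almost everywhere on a cube $P$ with $\ell(P)\ge a/k$ and $\operatorname{dist}_\infty(y,P)\le a/2$, then some cube $R\ni y$ of side $a$ satisfies $|R\cap P|\ge(a/(4k))^n$, so $\Mloc_a g(y)\ge(4k)^{-n}s$. The $a/2$-enlargement of $P$ is again a cube of side at least $a/k$, so the induction closes directly and yields $C=(4k)^{n(m-1)}$.
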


\begin{proof}
Set $R=b/a$. In
\cite[proof of Proposition~3.3]{NogayamaSawano2021}, the following
pointwise estimate is established: there exist $m=m(R)\in\N$ and
$C_{n,R}>0$ such that
\[
 \Mloc_R h(z)\leq C_{n,R}(\Mloc_1)^m h(z)
\]
for every measurable function $h$ and every $z\in\R^n$.

Let $g(y)=f(ay)$. A change of variables in the defining averages gives
\[
 \Mloc_R g(x/a)=\Mloc_b f(x),
 \qquad
 \Mloc_1 g(y)=\Mloc_a f(ay).
\]
Iterating the second identity yields
\[
 (\Mloc_1)^m g(x/a)=(\Mloc_a)^m f(x).
\]
Therefore,
\[
 \Mloc_b f(x)
 =\Mloc_R g(x/a)
 \leq C_{n,R}(\Mloc_1)^m g(x/a)
 =C_{n,R}(\Mloc_a)^m f(x),
\]
which proves \eqref{eq 20260812-18-noi}.
\end{proof}

If we apply \eqref{eq 20260812-18-noi}, we see that the value
$a$ in $\Mloc_a$ is not so important as long as $a>0$.
\begin{corollary}
\label{cor 20260812-1-noi}
Let $Z$ be a ball quasi-Banach function space continuously embedded into
$L^1_{\mathrm{loc}}(\R^n)$, and let $a,b\in(0,\infty)$.
\begin{enumerate}
\item[$(i)$] The operator $\Mloc_a$ is bounded on $Z$ if and only if
$\Mloc_b$ is bounded on $Z$.
\item[$(ii)$] Fix $0<q<\infty$. The following statements are equivalent.
\begin{enumerate}
\item[$(a)$] There is $C_a>0$ such that
\[
 \left\|
 \left(\sum_{j=1}^N(\Mloc_a f_j)^q\right)^{1/q}
 \right\|_Z
 \leq C_a
 \left\|
 \left(\sum_{j=1}^N|f_j|^q\right)^{1/q}
 \right\|_Z
\]
for every $N\in\N$ and every sequence $\{f_j\}_{j=1}^N$ for which the
quantity on the right is finite, with $C_a$ independent of $N$ and of the
sequence.
\item[$(b)$] There is $C_b>0$ such that
\[
 \left\|
 \left(\sum_{j=1}^N(\Mloc_b f_j)^q\right)^{1/q}
 \right\|_Z
 \leq C_b
 \left\|
 \left(\sum_{j=1}^N|f_j|^q\right)^{1/q}
 \right\|_Z
\]
for every $N\in\N$ and every sequence $\{f_j\}_{j=1}^N$ for which the
quantity on the right is finite, with $C_b$ independent of $N$ and of the
sequence.
\end{enumerate}
\end{enumerate}
\end{corollary}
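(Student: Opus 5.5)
The plan is to derive both parts from the pointwise comparison in Lemma~\ref{lem 20260812-2-noi}, together with the trivial monotonicity in the truncation parameter. By symmetry we may assume $a<b$. Since every cube with $\ell(Q)\leq a$ also satisfies $\ell(Q)\leq b$, we have $\Mloc_a f\leq\Mloc_b f$ pointwise. Hence boundedness of $\Mloc_b$ implies boundedness of $\Mloc_a$ by the lattice property of $Z$, with the same constant. The same pointwise inequality, applied termwise inside the $\ell^q$-sum, shows that $(b)$ implies $(a)$ in part $(ii)$ with $C_a=C_b$.

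For the converse direction in part $(i)$, invoke Lemma~\ref{lem 20260812-2-noi} to get $m\in\N$ and $C=C(n,b/a)$ with $\Mloc_b f\leq C(\Mloc_a)^m f$. If $\norm[Z]{\Mloc_a g}\leq K\norm[Z]{g}$ for all $g\in Z$, then iterating $m$ times gives $\norm[Z]{(\Mloc_a)^m f}\leq K^m\norm[Z]{f}$. Each iterate lies in $Z$, so it is an admissible input at the next step. The lattice property then yields $\norm[Z]{\Mloc_b f}\leq CK^m\norm[Z]{f}$. The embedding $Z\subset L^1_{\mathrm{loc}}$ guarantees that the averages are finite and that the iterates are well defined measurable functions. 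Since the lemma is stated for extended-valued functions, no additional care should be needed here.

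For part $(ii)$, $(a)\Rightarrow(b)$, I would iterate the vector-valued inequality in the same way. Given $\{f_j\}_{j=1}^N$ with finite right-hand side, set $f_j^{(0)}=|f_j|$ and $f_j^{(k)}=\Mloc_a f_j^{(k-1)}$. Applying $(a)$ to the family $\{f_j^{(k-1)}\}_{j=1}^N$ gives
\[
\left\|\left(\sum_{j=1}^N(f_j^{(k)})^q\right)^{1/q}\right\|_Z\leq C_a\left\|\left(\sum_{j=1}^N(f_j^{(k-1)})^q\right)^{1/q}\right\|_Z .
\]
By induction the right-hand side is finite at each stage, so the hypothesis of $(a)$ is met at the next step. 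After $m$ steps we obtain the bound $C_a^m$. Finally, the pointwise inequality $\Mloc_b f_j\leq C(\Mloc_a)^m f_j=Cf_j^{(m)}$, summed in $\ell^q$, together with the lattice property gives $(b)$ with $C_b=C\,C_a^m$.

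The only point requiring attention is the admissibility of each iterate: the intermediate functions must belong to $Z$, and hence be locally integrable, so that the next application is legitimate and the quantity on the right stays finite. This follows because the previous inequality places the $\ell^q$-sum in $Z$, and each $f_j^{(k)}$ is dominated by that sum. The quasi-norm causes no trouble, since only the lattice property and homogeneity are used, never the triangle inequality.
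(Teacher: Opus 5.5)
Your proposal is correct and follows essentially the same route as the paper: reduce to $a<b$ and use the monotonicity $\Mloc_a f\leq\Mloc_b f$ for the easy directions. For the harder directions you combine the pointwise bound $\Mloc_b f\leq C(\Mloc_a)^m f$ from Lemma~\ref{lem 20260812-2-noi} with $m$-fold iteration of the scalar or vector-valued estimate for $\Mloc_a$, exactly as the paper does; your check that each iterate stays in $Z$, so that the next application is admissible, is a detail the paper leaves implicit.
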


\begin{proof}
It is enough to consider $a<b$. If $\Mloc_a$ is bounded on $Z$, then
\eqref{eq 20260812-18-noi} and iteration give boundedness of $\Mloc_b$.
Likewise, repeated application of the corresponding vector-valued
estimate for $\Mloc_a$ gives
\[
 \left\|
 \left(\sum_{j=1}^N[(\Mloc_a)^mf_j]^q\right)^{1/q}
 \right\|_Z
 \leq C_a^m
 \left\|
 \left(\sum_{j=1}^N|f_j|^q\right)^{1/q}
 \right\|_Z.
\]
Combining this with \eqref{eq 20260812-18-noi} proves the estimate for
$\Mloc_b$. The reverse implications follow from
$\Mloc_a f\leq\Mloc_b f$.
\end{proof}

We slightly refine Lemma \ref{lem 20260705-3}.
\begin{lemma}
\label{lem 20260812-3-noi}
Let $Z$ be a ball quasi-Banach function space whose given quasi-norm is a
norm. If $\Mloc_\rho$ is bounded on $Z$ for some
$\rho\in(0,\infty)$, then $Z$ is continuously embedded into $L^1(B)$ for
every ball $B$. Consequently, $Z$ is a ball Banach function space.
\end{lemma}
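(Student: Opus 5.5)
The argument will follow the proof of Lemma~\ref{lem 20260705-3}. The goal is to produce, for each ball $B$, a nonzero element of $Z'$ that is bounded below by a positive constant on $B$. The generalized H\"older inequality \eqref{eq 20260702-3} then gives $\int_B|f|\leq c_B^{-1}\norm[Z]{f}\norm[Z']{g_B}$.

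\textbf{Step 1: a nonzero element of $Z'$.} Since $Z$ is a normed lattice containing indicators of balls (saturation, as in Lemma~\ref{lem 20260702-2}), I would first show that $Z'$ contains some $g\geq0$ with $g>0$ on a set of positive measure. For this I would invoke the same \cite[Lemma~2.26]{Nieraeth2023} argument used in Lemma~\ref{lem 20260705-3}, but applied to the basis of cubes of side length at most $\rho$. That lemma only uses boundedness of the maximal operator associated with the basis, so it should yield $\one_Q\in Z'$ for every cube $Q$ with $\ell(Q)\leq\rho$ directly.

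If I prefer a self-contained route, I would run the Rubio de Francia construction on the dual side. Nonzero functionals exist because $Z$ is normed and $\one_{B}\in Z$, which gives a positive $g\in Z'$ supported near some point via Hahn--Banach and the Lorentz--Luxemburg theory. This step is the main obstacle, because a normed lattice that is merely Fatou might a priori have trivial K\"othe dual. The boundedness of $\Mloc_\rho$ is exactly what rules this out: if $\int_Q|f|$ were not controlled by $\norm[Z]{f}$, one could find $f_k$ with $\norm[Z]{f_k}\leq 2^{-k}$ and $\int_Q|f_k|\geq 1$. Then $F=\sum_k|f_k|\in Z$ by completeness and Fatou, while $\Mloc_\rho F\geq \avg{F}_Q=\infty$ on $Q$. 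This contradicts boundedness, since $\Mloc_\rho F$ would have to be finite a.e. So I would actually use this contradiction argument directly.

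\textbf{Step 2: small cubes.} For a cube $Q$ with $\ell(Q)\leq\rho$ and any $f\in Z$, every $x\in Q$ satisfies $\avg{|f|}_Q\leq\Mloc_\rho f(x)$. Hence $\avg{|f|}_Q\one_Q\leq\Mloc_\rho f$, and the lattice property gives
\[
\avg{|f|}_Q\norm[Z]{\one_Q}\leq\norm[Z]{\Mloc_\rho f}\leq C\norm[Z]{f}.
\]
Since $\norm[Z]{\one_Q}>0$ (as $\one_Q\neq0$ in a normed space), this yields $\int_Q|f|\leq C|Q|\norm[Z]{\one_Q}^{-1}\norm[Z]{f}$. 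This one-line estimate actually replaces Step~1 entirely, and it is the argument I would write. It also handles the a priori possibility that $\int_Q|f|=\infty$, because then $\Mloc_\rho f=\infty$ on $Q$, contradicting $\Mloc_\rho f\in Z$.

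\textbf{Step 3: covering.} For a ball $B$, cover it by finitely many cubes $Q_1,\dots,Q_m$ of side length at most $\rho$, and sum the estimates from Step~2:
\[
\int_B|f|\leq\sum_\nu C|Q_\nu|\norm[Z]{\one_{Q_\nu}}^{-1}\norm[Z]{f}.
\]
This gives property $(iv)$ of Definition~\ref{def 20260702-1}. Properties $(i)$--$(iii)$ and completeness are already part of the hypotheses on $Z$, so $Z$ is a ball Banach function space.
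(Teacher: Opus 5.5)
Your proof is correct. Its final form (Steps 2 and 3) keeps the paper's covering step but obtains the key estimate on a small cube by a different, more elementary route.

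The paper covers $B$ by cubes of side length $\rho$ and then reruns the proof of Lemma~\ref{lem 20260705-3}. That is, it invokes \cite[Lemma~2.26]{Nieraeth2023} for the basis of small cubes to get $\one_Q\in Z'$, and then applies the generalized H\"older inequality \eqref{eq 20260702-3}.

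You prove the needed fact inline instead. The pointwise bound $\avg{|f|}_Q\one_Q\leq\Mloc_\rho f$ and the lattice property give $\avg{|f|}_Q\norm[Z]{\one_Q}\leq C\norm[Z]{f}$. This is precisely the statement $\norm[Z']{\one_Q}\leq C|Q|\norm[Z]{\one_Q}^{-1}$, obtained without ever mentioning $Z'$.

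What your route buys:
\begin{itemize}
\item It is self-contained and gives an explicit constant.
\item It disposes of the case $\int_Q|f|=\infty$ in the same stroke: apply the lattice property to $k\one_Q\leq\Mloc_\rho f$ for every $k$.
\item It uses neither the triangle inequality nor completeness. So the embedding into $L^1(B)$ holds for any ball quasi-Banach function space on which $\Mloc_\rho$ is bounded, and the norm hypothesis is needed only for the final conclusion that $Z$ is a ball Banach function space.
\end{itemize}
This is also the same device the paper itself uses, with the one-cube sparse family $\{Q\}$, in the proof of $(d)\Rightarrow(a)$ of Theorem~\ref{thm 20260701-2}. The paper's route is shorter on the page and places the lemma in the K\"othe-duality framework, at the price of relying on an external result.

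You can delete Step~1 entirely. Nontriviality of $Z'$ is not the real issue: what is needed is an element of $Z'$ bounded below on $B$. Neither the Hahn--Banach remark nor the contradiction argument there is used once Step~2 is in place.
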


\begin{proof}
This follows by 
covering $B$ with cubes of volume $\rho^n$ 
and applying the argument in the proof of Lemma \ref{lem 20260705-3}.
\end{proof}

\subsection{Weighted inequalities and extrapolation at a fixed truncation parameter}
\label{subsection:Weighted inequalities and extrapolation at a fixed truncation parameter}
A weight is a locally integrable function $w:\R^n\to(0,\infty)$ that
is positive and finite almost everywhere.
For $\rho\in(0,\infty]$ and $1<p<\infty$, define
\begin{equation}\label{eq 20260812-19-noi}
 [w]_{A_p(\rho)}
 =\sup\limits_{\ell(Q)\leq\rho}
 \avg{w}_Q
 \left(\avg{w^{-1/(p-1)}}_Q\right)^{p-1}.
\end{equation}
We write $w\in A_p(\rho)$ when this quantity is finite. Thus
$A_p(1)=A_p^{\mathrm{loc}}$ and $A_p(\infty)=A_p$. For $p=1$, set
\begin{equation}\label{eq 20260812-20-noi}
 [w]_{A_1(\rho)}
 =\mathop{\mathrm{ess\,sup}}_{x\in\R^n}
 \frac{\Mloc_\rho w(x)}{w(x)}.
\end{equation}

\begin{proposition}[Weighted vector-valued inequality with truncation parameter $\rho$]
\label{prop 20260812-1-noi}
Let $\rho\in(0,\infty)$ and $1<p,q<\infty$. There is an increasing
function $\mathcal N_{n,p,q}:[1,\infty)\to(0,\infty)$ such that, for every
$w\in A_p(\rho)$, every $N\in\N$, and every finite family for which the
quantity on the right is finite,
\begin{equation}\label{eq 20260812-21-noi}
 \left\|
 \left(\sum_{j=1}^N(\Mloc_\rho f_j)^q\right)^{1/q}
 \right\|_{L^p(w)}
 \leq
 \mathcal N_{n,p,q}([w]_{A_p(\rho)})
 \left\|
 \left(\sum_{j=1}^N|f_j|^q\right)^{1/q}
 \right\|_{L^p(w)}.
\end{equation}
The function $\mathcal N_{n,p,q}$ is independent of $N$.
\end{proposition}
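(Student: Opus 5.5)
The plan is to derive the truncated estimate \eqref{eq 20260812-21-noi} from the untruncated Andersen--John bound (Theorem~\ref{thm 20260702-1}) by localization. The tool is a weight extension: every $w\in A_p(\rho)$ agrees, on a cube of side comparable to $\rho$, with a global $A_p$ weight whose characteristic is controlled by $[w]_{A_p(\rho)}$.

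First I reduce to $\rho=1$ by dilation. With $g(y)=f(\rho y)$ and $w_\rho(y)=w(\rho y)$, averages over cubes of side at most $\rho$ become averages over cubes of side at most $1$. Hence
\[
 [w_\rho]_{A_p(1)}=[w]_{A_p(\rho)},
\]
and both sides of \eqref{eq 20260812-21-noi} scale by the same factor $\rho^{n/p}$. So the function $\mathcal N_{n,p,q}$ obtained for $\rho=1$ works for every $\rho$.

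Next I tile $\R^n$ by the unit cubes $P_k=k+[0,1)^n$, $k\in\Z^n$, and set $P_k^*=3P_k$. If $x\in P_k$ and $Q\ni x$ has $\ell(Q)\le1$, then $Q\subset P_k^*$. Therefore
\[
 \Mloc_1 f_j(x)\le \M(f_j\one_{P_k^*})(x)\qquad(x\in P_k).
\]
This gives
\[
 \int_{\R^n}\Bigl(\sum_j(\Mloc_1f_j)^q\Bigr)^{p/q}w
 \le\sum_k\int_{P_k}\Bigl(\sum_j\M(f_j\one_{P_k^*})^q\Bigr)^{p/q}\widetilde w_k.
\]
Here $\widetilde w_k$ is any weight on $\R^n$ that equals $w$ on $P_k^*$. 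Applying Theorem~\ref{thm 20260702-1} with the weight $\widetilde w_k$ bounds the $k$-th term by
\[
 N_{n,p,q}([\widetilde w_k]_{A_p})^p\int_{P_k^*}\Bigl(\sum_j|f_j|^q\Bigr)^{p/q}w.
\]
Each point lies in $3^n$ of the cubes $P_k^*$, so summing over $k$ gives \eqref{eq 20260812-21-noi}. The resulting constant is $3^{n/p}N_{n,p,q}(c\,\Phi([w]_{A_p(1)}))$, where $\Phi$ is supplied by the extension step below. It is increasing and independent of $N$, since Theorem~\ref{thm 20260702-1} is.

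The main obstacle is constructing $\widetilde w_k$. I need a global $A_p$ weight equal to $w$ on $P_k^*$ with $[\widetilde w_k]_{A_p}\le\Phi([w]_{A_p(1)})$ for an increasing function $\Phi$ that depends only on $n$ and $p$.

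I would try the standard construction from Rychkov \cite{Rychkov2001}. The cube $P_k^*$ has side $3>1$, so I first replace $A_p(1)$ by $A_p(3)$, using the doubling of local $A_p$ weights at bounded scales, at the cost of a constant depending on $n,p$ and $[w]_{A_p(1)}$. I then extend $w|_{P_k^*}$ periodically by reflections across the faces of $P_k^*$. Cubes of side at most $3$ then meet only boundedly many reflected copies, and their $A_p$ quantity is controlled by $[w]_{A_p(3)}$. Cubes of side larger than $3$ are averages of a periodic function and give a quantity comparable to the average over one period, which is again controlled.

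If this becomes messy, the fallback is to quote Rychkov's extension lemma for $A_p^{\mathrm{loc}}$ directly; that reference already appears in the paper.
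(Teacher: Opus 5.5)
Your proof is correct. Its first step is the same as the paper's: both rescale by $y\mapsto\rho y$, using $[w(\rho\,\cdot)]_{A_p(1)}=[w]_{A_p(\rho)}$, to reduce to $\rho=1$.

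After that the routes differ. The paper quotes the case $\rho=1$ from \cite[Lemma~2.11]{Rychkov2001} (equivalently \cite[Proposition~1.9]{NogayamaSawano2021}). It then asserts that taking an increasing envelope of the constants there yields $\mathcal N_{n,p,q}$. You instead derive the case $\rho=1$ from the global Andersen--John bound, Theorem~\ref{thm 20260702-1}, using three observations:
\begin{itemize}
\item $\Mloc_1 f_j$ on $P_k=k+[0,1)^n$ only sees $f_j\one_{3P_k}$;
\item the cubes $3P_k$ overlap exactly $3^n$ times;
\item $w$ restricted to $3P_k$ extends to a global $A_p$ weight with controlled characteristic.
\end{itemize}

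Your sketch of the extension goes through. Local doubling at scales $\le1$, applied to both $w$ and $\sigma=w^{-1/(p-1)}$, gives $[w]_{A_p(3)}\le\Phi_0([w]_{A_p(1)})$ for an increasing $\Phi_0$. After even reflection across the faces of $3P_k$, two cases arise:
\begin{itemize}
\item a cube of side $\le3$ folds back into a cube of the same size inside $3P_k$, with multiplicity at most $2^n$;
\item a cube of side $\ell>3$ meets fewer than $3^n(\ell/3)^n$ reflected copies.
\end{itemize}
The $A_p$ quantity only needs upper bounds on the two averages, so this gives $[\widetilde w_k]_{A_p}\le 3^{np}[w]_{A_p(3)}$.

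Your route is self-contained relative to results already stated in the paper. It also produces an explicit increasing, $\rho$-independent function $t\mapsto 3^{n/p}N_{n,p,q}\bigl(3^{np}\Phi_0(t)\bigr)$, so monotonicity comes for free rather than through an envelope. The paper's route is shorter but imports the local estimate as a black box.
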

For $\rho=\infty$, \eqref{eq 20260812-21-noi} is the weighted
vector-valued maximal inequality of Andersen and John
\cite{AndersenJohn1981}, with the same increasing-envelope convention;
see Theorem \ref{thm 20260702-1}. 
\begin{proof}
We remark that, when $\rho=1$,
\eqref{eq 20260812-21-noi} is precisely
\cite[Lemma~2.11]{Rychkov2001}. We now explain how to extend this
inequality to arbitrary values of $\rho<\infty$.
For $0<\rho<\infty$, set
\[
 \widetilde f_j(y)=f_j(\rho y),
 \qquad
 \widetilde w(y)=w(\rho y).
\]
A change of variables gives
\[
 \Mloc_1\widetilde f_j(y)=\Mloc_\rho f_j(\rho y),
 \qquad
 [\widetilde w]_{A_p(1)}=[w]_{A_p(\rho)},
\]
and, for every measurable $H$,
\[
 \|H(\rho\,\cdot)\|_{L^p(\widetilde w)}
 =\rho^{-n/p}\|H\|_{L^p(w)}.
\]
Hence Rychkov's estimate for $\Mloc_1$
\cite[Lemma~2.11]{Rychkov2001} yields
\eqref{eq 20260812-21-noi} with truncation parameter $\rho$. The same local result also appears in
\cite[Proposition~1.9]{NogayamaSawano2021}. The constants in
these arguments are uniform when the corresponding $A_p$ characteristic
is bounded; taking the increasing envelope of this dependence gives
$\mathcal N_{n,p,q}$. 
\end{proof}

We point out that the local counterpart to Theorem \ref{thm 20260702-2} is obtained
in \cite[Theorem 3.1]{CMM22}.
Proposition \ref{prop 20260812-2-noi} is known as \cite[Theorem 3.1]{CMM22}.
To keep track of the constant in \eqref{eq 20260812-22-noi},
we supply the proof.

\begin{proposition}[Extrapolation at a fixed truncation parameter]
\label{prop 20260812-2-noi}
Let $\rho\in(0,\infty)$. Let $X$ be a ball Banach function space such that
$\Mloc_\rho$ is bounded on both $X$ and $X'$. Fix
$p_0\in(1,\infty)$ and let
$\mathcal N:[1,\infty)\to(0,\infty)$ be increasing. Let $\mathcal F$ be a
family of pairs $(F,G)$ of nonnegative measurable functions. Assume that,
for every $w\in A_{p_0}(\rho)$ and every $(F,G)\in\mathcal F$ with
$G\in L^{p_0}(w)$, one has $F\in L^{p_0}(w)$ and
\begin{equation}\label{eq 20260812-22-noi}
 \|F\|_{L^{p_0}(w)}
 \leq\mathcal N([w]_{A_{p_0}(\rho)})\|G\|_{L^{p_0}(w)}.
\end{equation}
Then there is $C=C(X,\rho,p_0,\mathcal N)>0$ such that
\begin{equation}\label{eq 20260812-23-noi}
 \|F\|_X\leq C\|G\|_X
\end{equation}
for every $(F,G)\in\mathcal F$ with $G\in X$.
\end{proposition}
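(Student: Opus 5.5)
The plan is to run the Rubio de Francia extrapolation scheme adapted to the truncated basis, keeping track of constants. Fix $(F,G)\in\mathcal F$ with $G\in X$, and put $K=\|\Mloc_\rho\|_{X\to X}$ and $K'=\|\Mloc_\rho\|_{X'\to X'}$. These are finite by hypothesis.

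\textbf{Step 1: the two Rubio de Francia algorithms.}
For $0\leq h\in X$ set
\[
 \mathcal R h=\sum_{k=0}^\infty\frac{(\Mloc_\rho)^k h}{(2K)^k},
\]
and for $0\leq g\in X'$ set
\[
 \mathcal R' g=\sum_{k=0}^\infty\frac{(\Mloc_\rho)^k g}{(2K')^k}.
\]
The series converge in $X$ and in $X'$, respectively, by completeness, and in fact by the Fatou property applied to the increasing partial sums. They satisfy
\[
 h\leq\mathcal R h,\qquad \|\mathcal R h\|_X\leq 2\|h\|_X,\qquad \Mloc_\rho(\mathcal R h)\leq 2K\,\mathcal R h,
\]
and the analogous bounds for $\mathcal R'$. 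The third property says $\mathcal R h\in A_1(\rho)$ with $[\mathcal R h]_{A_1(\rho)}\leq 2K$, and likewise $[\mathcal R' g]_{A_1(\rho)}\leq 2K'$. Finiteness almost everywhere follows because these functions lie in $X$ or $X'$, which embed into $L^1_{\mathrm{loc}}$ by Lemma~\ref{lem 20260812-1-noi}.

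\textbf{Step 2: duality.}
By Lemma~\ref{lem 20260812-1-noi} we have $X''=X$ isometrically, so
\[
 \|F\|_X=\sup\Bigl\{\int Fg:\ 0\leq g,\ \|g\|_{X'}\leq1\Bigr\}.
\]
Fix such a $g$. Assume first that $G\neq0$ and $\|G\|_X>0$; take $h=G/\|G\|_X$, and replace $h$ by $h+\varepsilon\,u$ for a strictly positive $u\in X$ if needed, to keep the weight nonvanishing. Then form
\[
 w=(\mathcal R h)^{1-p_0}\,\mathcal R' g.
\]
Hölder's inequality gives
\[
 \int Fg\leq\int F(\mathcal R h)^{-1/p_0'}(\mathcal R' g)^{1/p_0}\,(\mathcal R h)^{1/p_0'}(\mathcal R' g)^{1/p_0'}
 \leq\|F\|_{L^{p_0}(w)}\Bigl(\int\mathcal R h\,\mathcal R' g\Bigr)^{1/p_0'}.
\]
The last factor is at most $(4)^{1/p_0'}$ by the generalized Hölder inequality \eqref{eq 20260702-3} together with the norm bounds from Step~1.

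\textbf{Step 3: the weight and the hypothesis.}
Next I check $w\in A_{p_0}(\rho)$ via the local factorization. If $w_0,w_1\in A_1(\rho)$, then
\[
 [w_0^{1-p_0}w_1]_{A_{p_0}(\rho)}\leq[w_0]_{A_1(\rho)}^{p_0-1}[w_1]_{A_1(\rho)}.
\]
The proof is the usual one: on a cube $Q$ with $\ell(Q)\leq\rho$ one has $w_0\geq[w_0]^{-1}\avg{w_0}_Q$ almost everywhere on $Q$, and the same for $w_1$. Only cubes of side at most $\rho$ appear in both definitions \eqref{eq 20260812-19-noi} and \eqref{eq 20260812-20-noi}, so the argument localizes verbatim. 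This gives
\[
 [w]_{A_{p_0}(\rho)}\leq(2K)^{p_0-1}2K'.
\]
Also
\[
 \|G\|_{L^{p_0}(w)}^{p_0}\leq\|G\|_X^{p_0}\int\mathcal R h\,\mathcal R' g\leq 4\|G\|_X^{p_0},
\]
using $G\leq\|G\|_X\,\mathcal R h$. Hence $G\in L^{p_0}(w)$, and hypothesis \eqref{eq 20260812-22-noi} applies:
\[
 \|F\|_{L^{p_0}(w)}\leq\mathcal N\bigl((2K)^{p_0-1}2K'\bigr)\,4^{1/p_0}\|G\|_X.
\]
Combining with Step~2 and taking the supremum over $g$ yields \eqref{eq 20260812-23-noi} with
\[
 C=4\,\mathcal N\bigl((2K)^{p_0-1}2K'\bigr).
\]
This constant depends only on $X,\rho,p_0,\mathcal N$, since $\mathcal N$ is increasing. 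The case $\|G\|_X=0$ forces $G=0$ a.e., and then the hypothesis gives $F=0$.

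\textbf{Main obstacle.}
The main obstacle is the technical care in Step~2. We must ensure that the weight $w$ is a genuine weight, namely positive and finite almost everywhere and locally integrable, even where $\mathcal R h$ or $\mathcal R' g$ vanish. I would handle this first by restricting the supremum over $g$ to $g>0$, which is possible since $X'$ is saturated, so a strictly positive element exists, and adding $\varepsilon$ times it. Similarly I would add $\varepsilon$ times a strictly positive element of $X$ to $h$, then let $\varepsilon\to0$. Local integrability of $w$ would need $(\mathcal R h)^{1-p_0}\in L^1_{\mathrm{loc}}$ weighted by $\mathcal R' g$. I would get this from the $A_1(\rho)$ lower bounds on cubes of side at most $\rho$, then cover bounded sets by finitely many such cubes.
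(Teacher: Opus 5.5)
Your proposal is correct and follows essentially the same route as the paper: Rubio de Francia iterations on both $X$ and $X'$, and the factorized weight $(\mathcal R h)^{1-p_0}\mathcal R' g\in A_{p_0}(\rho)$ with characteristic at most $(2K)^{p_0-1}2K'$. Together with H\"older's inequality and $X''=X$, this gives the same constant $4\mathcal N\bigl((2K)^{p_0-1}2K'\bigr)$. The only cosmetic difference is that you normalize $G$ first, whereas the paper applies $\mathcal R_X$ directly to $G+\varepsilon u_0$ and $\mathcal R_{X'}$ to $h+\varepsilon v_0$; this also absorbs your separate treatment of the case $\|G\|_X=0$.
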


When $\rho=\infty$, the assertion follows from the extrapolation theorem
of Nieraeth \cite[Theorem~4.7 and Remark~4.8]{Nieraeth2023}. 
See Theorem \ref{thm 20260702-2}.
\begin{proof}
Assume $0<\rho<\infty$ and set
\[
 A_X=\max\{1,\|\Mloc_\rho\|_{X\to X}\},
 \qquad
 A_{X'}=\max\{1,\|\Mloc_\rho\|_{X'\to X'}\}.
\]
The half-open unit cubes $k+[0,1)^n$, $k\in\mathbb Z^n$, are pairwise
disjoint and their union is $\R^n$. Since this collection is countable,
choose a sequence $\{Q_\nu\}_{\nu=1}^{\infty}$ in which each of these
cubes occurs exactly once, and put
\[
 u_0=\sum_{\nu=1}^{\infty}
    \frac{2^{-\nu}}{1+\|\one_{Q_\nu}\|_X}\one_{Q_\nu},
 \qquad
 v_0=\sum_{\nu=1}^{\infty}
    \frac{2^{-\nu}}{1+\|\one_{Q_\nu}\|_{X'}}\one_{Q_\nu}.
\]
The lattice and Fatou properties imply
$u_0\in X$, $v_0\in X'$, $u_0,v_0>0$ almost everywhere, and
$\|u_0\|_X,\|v_0\|_{X'}\leq1$.

For $Z\in\{X,X'\}$, let $A_Z$ denote $A_X$ or $A_{X'}$, respectively.
For every nonnegative $h\in Z$, define
\begin{equation}\label{eq 20260812-24-noi}
 \mathcal R_Zh
 =\sum_{k=0}^{\infty}
  \frac{(\Mloc_\rho)^kh}{(2A_Z)^k}.
\end{equation}
The Fatou property and the boundedness of $\Mloc_\rho$ give
\begin{equation}\label{eq 20260812-25-noi}
 h\leq\mathcal R_Zh,
 \qquad
 \|\mathcal R_Zh\|_Z\leq2\|h\|_Z,
 \qquad
 \Mloc_\rho(\mathcal R_Zh)\leq2A_Z\mathcal R_Zh.
\end{equation}
Consequently, if $h>0$ almost everywhere, then
$\mathcal R_Zh\in A_1(\rho)$ and
$[\mathcal R_Zh]_{A_1(\rho)}\leq2A_Z$.

Fix $(F,G)\in\mathcal F$ with $G\in X$. Let $0\leq h\in X'$ satisfy
$\|h\|_{X'}\leq1$, and fix $\varepsilon>0$. Set
\[
 U=\mathcal R_X(G+\varepsilon u_0),
 \qquad
 V=\mathcal R_{X'}(h+\varepsilon v_0).
\]
Then $U,V$ are strictly positive $A_1(\rho)$ weights and
\begin{equation}\label{eq 20260812-26-noi}
 \|U\|_X\leq2(\|G\|_X+\varepsilon),
 \qquad
 \|V\|_{X'}\leq2(1+\varepsilon).
\end{equation}
Write $p=p_0$ and define $W=U^{1-p}V$. For every cube $Q$ with
$\ell(Q)\leq\rho$, the $A_1(\rho)$ inequalities yield
\begin{align*}
 \avg{W}_Q
 &\leq[U]_{A_1(\rho)}^{p-1}
    \avg{U}_Q^{1-p}\avg{V}_Q,\\
 \avg{W^{-1/(p-1)}}_Q
 &\leq[V]_{A_1(\rho)}^{1/(p-1)}
    \avg{V}_Q^{-1/(p-1)}\avg{U}_Q.
\end{align*}
Thus
\begin{equation}\label{eq 20260812-27-noi}
 [W]_{A_p(\rho)}
 \leq(2A_X)^{p-1}(2A_{X'})=:K.
\end{equation}
The same calculation shows the required local integrability of the two
functions in the $A_p(\rho)$ condition.

Let $I=\int_{\R^n}U(x)V(x)\,dx$. By
\eqref{eq 20260702-3} and \eqref{eq 20260812-26-noi},
\begin{equation}\label{eq 20260812-28-noi}
 I\leq4(\|G\|_X+\varepsilon)(1+\varepsilon).
\end{equation}
Since $G\leq U$ and $h\leq V$,
\[
 \int_{\R^n}G^pW\,dx\leq I,
 \qquad
 \int_{\R^n}h^{p'}W^{-1/(p-1)}\,dx\leq I.
\]
Weighted H\"older's inequality, \eqref{eq 20260812-22-noi}, and
\eqref{eq 20260812-27-noi} give
\[
 \int_{\R^n}Fh\,dx
 \leq\mathcal N(K)I
 \leq4\mathcal N(K)(\|G\|_X+\varepsilon)(1+\varepsilon).
\]
Taking the supremum over all such $h$, using $X''=X$ from
Lemma~\ref{lem 20260812-1-noi}, and then letting
$\varepsilon\downarrow0$ proves \eqref{eq 20260812-23-noi}.
\end{proof}

\subsection{Infinite-sequence estimates and the cases $0<q\leq1$ and $q=\infty$}
\label{section:q<1,infinity}

Corresponding to Lemma \ref{lem 20260702-3}, we have the following result.
\begin{proposition}
\label{prop 20260812-3-noi}
Let $\rho\in(0,\infty)$, let $X$ be a ball quasi-Banach function space,
and let $0<q<\infty$. Assume that \eqref{eq 20260812-3-noi} holds
with constant $C$ for every $N\in\mathbb{N}$. Then
$X\subset L^1_{\mathrm{loc}}(\mathbb{R}^n)$, and
\begin{equation}\label{eq 20260812-29-noi}
 \left\|
 \left(\sum_{j=1}^{\infty}(\Mloc_\rho f_j)^q\right)^{1/q}
 \right\|_X
 \leq C
 \left\|
 \left(\sum_{j=1}^{\infty}|f_j|^q\right)^{1/q}
 \right\|_X
\end{equation}
for every sequence $\{f_j\}_{j=1}^{\infty}$ such that
\[
\left(\sum_{j=1}^{\infty}|f_j|^q\right)^{1/q}\in X.
\]
\end{proposition}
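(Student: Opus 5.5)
The plan is to mirror the proof of Lemma~\ref{lem 20260702-3}, with $\Mloc_\rho$ in place of $\M$. The argument splits into two independent parts: local integrability of the elements of $X$, and the passage from finite families to infinite sequences.

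\emph{Local integrability.} Taking $N=1$ in \eqref{eq 20260812-3-noi} gives $\|\Mloc_\rho f\|_X\leq C\|f\|_X$ for $f\in X$. Hence $\Mloc_\rho f<\infty$ almost everywhere, since an element of a quasi-Banach lattice is finite almost everywhere: if it were $+\infty$ on a set of positive measure, the lattice property would give $\|\one_E\|_X\leq\|f\|_X/t$ for every $t>0$ and some $E$ with $|E|>0$, which is impossible. Suppose $\int_Q|f|=\infty$ for some cube $Q$ with $\ell(Q)\leq\rho$. Then every $x\in Q$ satisfies $\Mloc_\rho f(x)\geq\avg{|f|}_Q=\infty$, so $\Mloc_\rho f=\infty$ on $Q$, a set of positive measure. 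This contradiction shows $\int_Q|f|<\infty$ for every such cube. A bounded set $E$ is covered by finitely many cubes of side length $\rho$, so $\int_E|f|<\infty$. Thus $X\subset L^1_{\mathrm{loc}}(\R^n)$. This is the only place where the truncation matters, and it is harmless because only small cubes are used.

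\emph{Infinite sequences.} Let $F=(\sum_{j}|f_j|^q)^{1/q}\in X$. Each $|f_j|\leq F$, so $f_j\in X$ and is locally integrable by the first part. Set $F_N=(\sum_{j\leq N}|f_j|^q)^{1/q}\leq F$ and $G_N=(\sum_{j\leq N}(\Mloc_\rho f_j)^q)^{1/q}$. By hypothesis and the lattice property,
\[
\|G_N\|_X\leq C\|F_N\|_X\leq C\|F\|_X .
\]
Since $G_N\uparrow G=(\sum_j(\Mloc_\rho f_j)^q)^{1/q}$ pointwise in $[0,\infty]$, the Fatou property, Definition~\ref{def 20260702-2}$(ii)$, gives $G\in X$ and $\|G\|_X=\sup_N\|G_N\|_X\leq C\|F\|_X$. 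That is \eqref{eq 20260812-29-noi}.

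I expect no real obstacle. The one point to check carefully is applying the Fatou property to extended-valued $G_N$: each $G_N$ lies in $X$, hence is finite almost everywhere, and the monotone limit is then handled by Definition~\ref{def 20260702-2}$(ii)$ directly.
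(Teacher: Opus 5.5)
Your proposal is correct and follows the paper's proof essentially step for step. You take $N=1$ to get boundedness of $\Mloc_\rho$ on $X$, note that $\int_Q|f|=\infty$ for a cube with $\ell(Q)\leq\rho$ would force $\Mloc_\rho f=\infty$ on $Q$, cover bounded sets by finitely many such cubes, and then pass to infinite sequences via $F_N\leq F$, $G_N\uparrow G$ and the Fatou property. Your argument that elements of $X$ are finite almost everywhere, via $t\one_E\leq|g|$ for all $t>0$, just makes explicit a step the paper leaves implicit.
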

\begin{proof}
Taking $N=1$ in \eqref{eq 20260812-3-noi}, we obtain
\[
 \|\Mloc_\rho f\|_X\leq C\|f\|_X
 \qquad(f\in X).
\]
Therefore, the argument in the proof of 
Lemma \ref{lem 20260702-3} works
to show that $X \subset L^1_{\rm loc}({\mathbb R}^n)$.
For example, instead of \eqref{eq:260813-1-Sawano}
we can show that
\begin{equation}\label{eq:260813-2-Sawano}
\Mloc_\rho f=\infty
\end{equation} 
on $Q$
if $\displaystyle\|f\|_{L^1(Q)}=\infty$ for some cube $Q$ with $\ell(Q)\leq\rho$.
Having established $X\subset L^1_{\rm loc}({\mathbb R}^n)$,
we define $F_N$, $G_N$, $F$, $G$ exactly as in the proof of Lemma \ref{lem 20260702-3}, with ${\mathcal M}$ replaced by ${\mathcal M}_{\rho}^{\rm loc}$. Then 
\[
\| G_N\|_X \le C \| F_N\|_X \le C\| F\|_X. 
\]
Since $G_N\uparrow G$, the Fatou property gives \eqref{eq 20260812-29-noi}. 
\end{proof}

Corresponding to Proposition \ref{prop 20260702-1},
we have the following negative result.
\begin{proposition}
\label{prop 20260812-4-noi}
Let $\rho\in(0,\infty)$, let $X$ be a ball quasi-Banach function space on
$\R^n$, and let $0<q\leq1$. There is no constant $C>0$ such that
\begin{equation}\label{eq 20260812-30-noi}
 \left\|
 \left(\sum_{j=1}^N(\Mloc_\rho f_j)^q\right)^{1/q}
 \right\|_X
 \leq C
 \left\|
 \left(\sum_{j=1}^N|f_j|^q\right)^{1/q}
 \right\|_X
\end{equation}
holds for every $N\in\N$ and every finite family of measurable
functions for which the quantity on the right is finite.
\end{proposition}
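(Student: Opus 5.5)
The plan is to rescale the construction in the proof of Proposition~\ref{prop 20260702-1} so that every cube used there has side length at most $\rho$. Suppose, to the contrary, that \eqref{eq 20260812-30-noi} holds with a constant $C$ independent of $N$.

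Fix $\lambda\in(0,\rho/2]$, say $\lambda=\min\{1,\rho/2\}$, and set $Q=[0,\lambda)^n$. Choose $L=8M$ with $M\in\N$. Partition $Q$ into the $L^n$ cubes $Q_\nu$ of side $\lambda/L$, and put $f_\nu=\one_{Q_\nu}$. As in \eqref{eq 20260702-10}, one has $\bigl(\sum_\nu|f_\nu|^q\bigr)^{1/q}=\one_Q$. Hence the right-hand side of \eqref{eq 20260812-30-noi} equals $C\|\one_Q\|_X$, which is finite and independent of $L$ by Definition~\ref{def 20260702-2}$(i)$,$(iii)$.

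For $x\in E=\lambda(3/8,5/8)^n$, let $\kappa$ be the index with $x\in Q_\kappa$. The cube $R_{\nu,x}$ from the proof of Proposition~\ref{prop 20260702-1}, dilated by $\lambda$, contains $x$ and $Q_\nu$ and has side length $\lambda(d(\nu,\kappa)+2)/L$. The key check is that this cube is admissible for $\Mloc_\rho$. We only use indices with $d(\nu,\kappa)\le M=L/8$, so the side length is at most $\lambda(M+2)/(8M)\le 3\lambda/8\le\rho$. Thus the estimate \eqref{eq 20260702-11} survives unchanged:
\[
 \Mloc_\rho f_\nu(x)\ge\frac{|Q_\nu|}{|R_{\nu,x}|}=\bigl(d(\nu,\kappa)+2\bigr)^{-n}.
\]
The lattice count $\#\Gamma_s(\kappa)\ge 2ns^{n-1}$ for $1\le s\le M$ is purely combinatorial and is unaffected by the dilation. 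So the same summation gives, for every $x\in E$,
\[
 \Bigl(\sum_\nu(\Mloc_\rho f_\nu(x))^q\Bigr)^{1/q}\ge
 \begin{cases} c_n\log L, & q=1,\\ c_{n,q}L^{n(1/q-1)}, & 0<q<1.\end{cases}
\]

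By the lattice property and $\one_E\in X$ (a bounded set), the left-hand side of \eqref{eq 20260812-30-noi} is at least this lower bound times $\|\one_E\|_X$. We have $\|\one_E\|_X>0$ since $|E|>0$. Letting $L\to\infty$, the left-hand side tends to infinity while the right-hand side stays bounded, which is a contradiction.

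The only point needing care is the side-length bound $\ell(R_{\nu,x})\le\rho$. This is why we restrict to $s\le M$ and choose $\lambda\le\rho/2$; everything else is a verbatim transcription of the proof of Proposition~\ref{prop 20260702-1}.
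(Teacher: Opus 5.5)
Your proposal is correct and is essentially the paper's proof: the paper also replaces $[0,1)^n$ by $[0,\tau)^n$ with $2\tau\le\rho$ and reruns the argument of Proposition~\ref{prop 20260702-1}. You also check explicitly that $\ell(R_{\nu,x})\le 3\lambda/8\le\rho$ whenever $d(\nu,\kappa)\le M$, and that $\|\one_E\|_X>0$; the paper leaves both details implicit.
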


\begin{proof}
Since the proof of Proposition \ref{prop 20260702-1}
used compactly supported functions,
the same argument as \ref{prop 20260702-1} works.
In fact, choose $\tau>0$ such that $2\tau\le \rho$, and reexamine the proof of Proposition \ref{prop 20260702-1} with $[0,1)^n$ replaced by $[0,\tau)^n$. 
\end{proof}

The next proposition is a counterpart to Theorem \ref{thm 20260701-1}$(iv)$.
\begin{proposition}[$\ell^\infty$-valued estimates]
\label{prop 20260812-5-noi}
Let $\rho\in(0,\infty)$ and let $X$ be a ball quasi-Banach function space
continuously embedded into $L^1_{\mathrm{loc}}(\R^n)$. The following
statements are equivalent. The constant $C$ may be chosen to be the same
in \emph{$(i)$}--\emph{$(iii)$}.
\begin{enumerate}
\item[$(i)$] There is $C>0$ such that
\[
 \|\Mloc_\rho f\|_X\leq C\|f\|_X
 \qquad(f\in X).
\]
\item[$(ii)$] There is $C>0$ such that, for every $N\in\N$ and every sequence
$\{f_j\}_{j=1}^N$ satisfying
$\max\limits_{1\leq j\leq N}|f_j|\in X$,
\[
 \left\|\max\limits_{1\leq j\leq N}\Mloc_\rho f_j\right\|_X
 \leq C
 \left\|\max\limits_{1\leq j\leq N}|f_j|\right\|_X.
\]
\item[$(iii)$] There is $C>0$ such that, for every sequence
$\{f_j\}_{j=1}^{\infty}$ satisfying $\sup\limits_{j \in{\mathbb N}}|f_j|\in X$,
\[
 \left\|\sup\limits_{j \in{\mathbb N}}\Mloc_\rho f_j\right\|_X
 \leq C
 \left\|\sup\limits_{j \in{\mathbb N}}|f_j|\right\|_X.
\]
\end{enumerate}
Moreover, even when $X$ is a ball Banach function space, these equivalent conditions need not imply that $\Mloc_\rho$ is bounded on $X'$.
\end{proposition}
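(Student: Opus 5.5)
The plan is to follow the proof of Proposition~\ref{prop 20260702-2} almost verbatim, with $\M$ replaced by $\Mloc_\rho$. The key input is the pointwise inequality
\[
 \sup_j\Mloc_\rho f_j\leq\Mloc_\rho\Bigl(\sup_j|f_j|\Bigr)
\]
in $[0,\infty]$, valid for any finite or countable family. It holds because, for each admissible cube $Q\ni x$ with $\ell(Q)\leq\rho$, one has $\avg{|f_j|}_Q\leq\avg{\sup_k|f_k|}_Q$. Taking the supremum over such $Q$ and then over $j$ gives the claim. Measurability of $\sup_j\Mloc_\rho f_j$ is not an issue, since each $\Mloc_\rho f_j$ is lower semicontinuous: the supremum over open cubes agrees with the one over half-open cubes, or one simply argues as for $\M$.

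With this inequality, $(i)\Rightarrow(ii)$ and $(i)\Rightarrow(iii)$ follow with the same constant $C$. Set $F=\max_j|f_j|$ or $F=\sup_j|f_j|$, which lies in $X$ by hypothesis. The lattice property of $X$ (Definition~\ref{def 20260702-2}$(i)$) then gives
\[
 \norm[X]{\sup_j\Mloc_\rho f_j}\leq\norm[X]{\Mloc_\rho F}\leq C\norm[X]{F}.
\]
For the converses $(ii)\Rightarrow(i)$ and $(iii)\Rightarrow(i)$, apply the hypothesis to the family $f_1=f$ and $f_j=0$ for $j\geq2$. Since $\Mloc_\rho 0=0$, this recovers the scalar estimate with the same constant. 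The assumed embedding into $L^1_{\mathrm{loc}}$ only ensures that all averages are finite; it plays no essential role in the argument.

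For the final assertion, take $X=L^\infty(\R^n)$, which is a ball Banach function space. We have $\Mloc_\rho f\leq\norm[L^\infty]{f}$, so $\Mloc_\rho$ is bounded on $X$, while $X'=L^1(\R^n)$. It then remains to show that $\Mloc_\rho$ is unbounded on $L^1$. This is the only step requiring an argument beyond the global case, and I expect it to be the main (mild) obstacle. Take $f=\one_{[0,\tau)^n}$ with small $\tau\leq\rho/2$. For $x$ with $|x|_\infty\leq\rho/2$ and $x$ outside the cube, there is a cube of side at most $\rho$ containing $x$ and $[0,\tau)^n$ with side comparable to $|x|_\infty$. This gives $\Mloc_\rho f(x)\gtrsim\tau^n|x|_\infty^{-n}$, whose integral over $\tau\leq|x|_\infty\leq\rho/2$ is $\gtrsim\tau^n\log(\rho/(2\tau))$. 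Since $\norm[L^1]{f}=\tau^n$, the ratio $\norm[L^1]{\Mloc_\rho f}/\norm[L^1]{f}$ tends to infinity as $\tau\to0$. Alternatively, one can note directly that $\Mloc_\rho f\notin L^1$ for $f=\one_{[0,\tau)^n}$, because of the logarithmic singularity of the form $|x|^{-n}$ near the cube, exactly as in Stein's classical argument localized to scale $\rho$.
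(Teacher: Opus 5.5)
Your proposal is correct and follows the paper's route: the pointwise bound $\sup_j\Mloc_\rho f_j\leq\Mloc_\rho(\sup_j|f_j|)$ and a family with one nonzero term give the equivalences with the same constant, and $X=L^\infty(\R^n)$ with $X'=L^1(\R^n)$ gives the final assertion. Your $\tau\to0$ ratio argument with $\one_{[0,\tau)^n}$ correctly supplies the unboundedness on $L^1$ that the paper only asserts, but drop the ``alternatively'' remark: $\Mloc_\rho\one_{[0,\tau)^n}\leq1$ vanishes outside a bounded neighbourhood of the cube, and your $|x|_\infty^{-n}$ lower bound holds only for $|x|_\infty\geq\tau$, so this function does lie in $L^1$ and only the ratio argument works.
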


\begin{proof}
Go through
the same argument as Theorem \ref{thm 20260701-1}
$(iv)$ works.

For the final assertion, take $X=L^\infty(\R^n)$. Then
$\Mloc_\rho$ is bounded on $X$, whereas $X'=L^1(\R^n)$ and
$\Mloc_\rho$ is not bounded on $L^1(\R^n)$.
\end{proof}

\subsection{Dyadic grids and the sparse criterion at a fixed truncation parameter}
\label{sec 20260812-3-noi}

We use the same system $\mathcal{D}$ as Section \ref{subsection:Dyadic grids and sparse families}.
This adjacent-grid fact is standard; see \cite{Lerner2013}. Consequently,
for $\rho\in(0,\infty)$,
\begin{equation}\label{eq 20260812-32-noi}
 \Mloc_\rho f
 \leq6^n\sum_{t=1}^{3^n}{{\M_{\D^t,(6\rho)}}}f,
\end{equation}
where 
\[ {{\M_{\D,(R)}}}f(x)
 =\sup\limits_{{Q\in\D(R)}}\one_Q(x)\avg{|f|}_Q.
 \]
Here, for a dyadic grid $\D$ and $R\in(0,\infty]$, write
\begin{equation}\label{eq 20260812-4-noi}
 \D(R)=\{Q\in\D:\ell(Q)\leq R\},
 \qquad
 \D(\infty)=\D.
\end{equation}
When $\Sscr\subset\D(R)$, we also write
$\T_{\Sscr,q,(R)}:=\T_{\Sscr,q}$ and
$\A_{\Sscr,(R)}:=\A_{\Sscr}$ to make the upper bound $R$ on the side
lengths of the cubes in $\Sscr$ explicit.

\begin{lemma}[Finite dyadic sparse domination]
\label{lem 20260812-4-noi}
Let $R\in(0,\infty)$, let $\D$ be a dyadic grid, and let
$\Fscr\subset\D(R)$ be a finite collection. Set
\[
{{\M_{\Fscr,(R)} }}f(x)=\max\limits_{Q\in\Fscr}\avg{|f|}_Q\one_Q(x)
\]
when $\Fscr\neq\varnothing$, and use the convention
${{\M_{\varnothing,(R)}}}f=0$. For every locally integrable $f$, there is a
finite $1/2$-sparse collection $\Sscr\subset\Fscr$ such that
\begin{equation}\label{eq 20260812-33-noi}
 {{\M_{\Fscr,(R)}}}f\leq2{{\A_{\Sscr,(R)}}}f
 \quad\text{almost everywhere}.
\end{equation}
\end{lemma}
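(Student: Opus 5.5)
The plan is to rerun the recursive stopping-time selection from the proof of Lemma~\ref{lem 20260702-4} word for word, and then observe what it produces. Every selected cube, whether inclusion-maximal in $\Fscr$ or an inclusion-maximal element of some $\mathcal B(Q)\subset\Fscr$, lies in $\Fscr$ by construction. So the resulting collection satisfies $\Sscr\subset\Fscr\subset\D(R)$ automatically. The truncation $R$ enters only through this inclusion, and therefore $\A_{\Sscr,(R)}f=\A_{\Sscr}f$ is a legitimate truncated sparse operator.

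In order, the steps are as follows. First, dispose of $\Fscr=\varnothing$ by taking $\Sscr=\varnothing$, consistent with the convention $\M_{\varnothing,(R)}f=0$. Second, build $\Sscr$. Select the inclusion-maximal cubes of $\Fscr$. Then, for each selected $Q$, select the inclusion-maximal cubes of $\mathcal B(Q)=\{P\in\Fscr: P\subsetneq Q,\ \avg{|f|}_P>2\avg{|f|}_Q\}$. The process terminates because $\Fscr$ is finite and each step passes to a proper subcube. Third, prove $1/2$-sparseness with $E_Q=Q\setminus\bigcup_{P\in\operatorname{ch}_{\Sscr}(Q)}P$. The selected children are pairwise disjoint, and the stopping condition gives $\sum_{P}|P|\le |Q|/2$ when $\avg{|f|}_Q>0$. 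When $\avg{|f|}_Q=0$, no children are selected. Disjointness of the sets $E_Q$ follows from the nesting property of $\D$ together with the chain of selected children. Local integrability of $f$ ensures that all the averages are finite.

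Fourth, prove the pointwise bound. Fix $R'\in\Fscr$. Descend from the unique maximal cube containing $R'$ through selected children, as long as $R'$ is contained in one of them. This yields a terminal $Q\in\Sscr$ with $R'\subseteq Q$. The same two-case maximality argument as before gives $\avg{|f|}_{R'}\le2\avg{|f|}_Q$. Hence $\avg{|f|}_{R'}\one_{R'}\le 2\A_{\Sscr}f$, and taking the maximum over $R'\in\Fscr$ gives \eqref{eq 20260812-33-noi}.

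There is no real obstacle. The only point needing care is to state explicitly that no cube outside $\Fscr$ is ever introduced, for example no dyadic parent. This is what guarantees $\ell(Q)\le R$ for every $Q\in\Sscr$. The remaining work is a verbatim citation of the argument for Lemma~\ref{lem 20260702-4}.
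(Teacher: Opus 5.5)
Your proposal is correct and matches the paper's proof. The paper likewise applies the recursive selection of Lemma~\ref{lem 20260702-4} to $\Fscr$, observes that every selected cube belongs to $\Fscr$, so that $\Sscr\subset\Fscr\subset\D(R)$, and then reads off the pointwise bound from \eqref{eq 20260702-17}. Your recap of the sparseness and pointwise steps is accurate but adds nothing beyond that inclusion observation.
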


\begin{proof}
Apply the recursive selection in Lemma~\ref{lem 20260702-4} to
$\Fscr$. Since every selected cube belongs to $\Fscr$, one has
\[
 \Sscr\subset\Fscr\subset\D(R).
\]
The pointwise estimate follows from
\eqref{eq 20260702-17}.
\end{proof}

The proof of the next proposition follows the corresponding argument in
Proposition~\ref{prop 20260704-1} with
$\D$ replaced by $\D(6\rho)$ and the local maximal operators kept
explicit throughout.

\begin{proposition}[Sparse criterion with truncation parameter $\rho$]
\label{prop 20260812-6-noi}
Let $\rho\in(0,\infty)$ and let $X$ be a ball Banach function space. The
following statements are equivalent.
\begin{enumerate}
\item[$(i)$] The operator $\Mloc_\rho$ is bounded on both $X$ and $X'$.
\item[$(ii)$] There is a constant $C_{\mathrm{sp}}>0$ such that
\begin{equation}\label{eq 20260812-34-noi}
 \|{{\A_{\Sscr,(6\rho)}}}f\|_X\leq C_{\mathrm{sp}}\|f\|_X
\end{equation}
for every finite $1/2$-sparse subcollection $\Sscr$ of $\D(6\rho)$,
where $\D$ is an arbitrary dyadic grid, and every $f\in X$, with a
constant independent of $\D$ and $\Sscr$.
\end{enumerate}
\end{proposition}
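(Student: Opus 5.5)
We prove the two implications separately, following the proof of Proposition~\ref{prop 20260704-1}.

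\emph{$(ii)\Rightarrow(i)$.} First we dualize. Let $\Sscr\subset\D(6\rho)$ be finite and $1/2$-sparse. Since $\Sscr$ is finite, the identity
\[
 \int|f|\,\A_{\Sscr,(6\rho)}g=\int\A_{\Sscr,(6\rho)}f\,|g|
\]
holds. Together with \eqref{eq 20260702-3}, it gives $\|\A_{\Sscr,(6\rho)}g\|_{X'}\leq C_{\mathrm{sp}}\|g\|_{X'}$ for all $g\in X'$.

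Next, fix a dyadic grid $\D$ and a finite $\Fscr\subset\D(6\rho)$. Let $Z\in\{X,X'\}$ and $f\in Z$; then $f$ is locally integrable by Lemma~\ref{lem 20260702-2}$(ii)$. Lemma~\ref{lem 20260812-4-noi} supplies a finite $1/2$-sparse $\Sscr\subset\Fscr\subset\D(6\rho)$ with $\M_{\Fscr,(6\rho)}f\leq2\A_{\Sscr,(6\rho)}f$. Hence $\|\M_{\Fscr,(6\rho)}f\|_Z\leq2C_{\mathrm{sp}}\|f\|_Z$.

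The collection $\D(6\rho)$ is countable, so we can exhaust it by increasing finite subfamilies $\Fscr_k$. Then $\M_{\Fscr_k,(6\rho)}f\uparrow\M_{\D,(6\rho)}f$. The Fatou property of $X$ and Lemma~\ref{lem 20260702-associate-fatou} for $X'$ give $\|\M_{\D,(6\rho)}f\|_Z\leq 2C_{\mathrm{sp}}\|f\|_Z$. Finally, \eqref{eq 20260812-32-noi} yields $\|\Mloc_\rho f\|_Z\leq2\cdot18^nC_{\mathrm{sp}}\|f\|_Z$ on both spaces.

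\emph{$(i)\Rightarrow(ii)$.} This direction needs a new argument, since the global result of \cite{LoristNieraethCompact2024} is not available for the local operator. We avoid duality-based sparse theory and run a Rubio de Francia–type argument, as in the proof of Proposition~\ref{prop 20260812-2-noi}.

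Fix a finite $1/2$-sparse $\Sscr\subset\D(6\rho)$ and $0\leq f\in X$. Pick $0\leq h\in X'$ with $\|h\|_{X'}\leq1$, and form $V=\mathcal R_{X'}(h)$ as in \eqref{eq 20260812-24-noi}. Then $h\leq V$, $\|V\|_{X'}\leq2$, and $\Mloc_\rho V\leq 2A_{X'}V$. Every $Q\in\Sscr$ has $\ell(Q)\leq6\rho$. Covering $Q$ by boundedly many, $C_n$ say, cubes of side at most $\rho$ shows $\Mloc_{6\rho}V\lesssim(\Mloc_\rho)^mV$ via Lemma~\ref{lem 20260812-2-noi}. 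So it is cleaner to build $V$ directly with $\Mloc_{6\rho}$, which is bounded on $X'$ by Corollary~\ref{cor 20260812-1-noi}$(i)$; then $\langle V\rangle_Q\leq \inf_Q \Mloc_{6\rho}V\leq 2A'\inf_QV$ for $Q\in\Sscr$.

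With this, the duality pairing is estimated as
\[
 \int \A_{\Sscr}f\,h\leq\sum_{Q\in\Sscr}\avg{f}_Q\avg{V}_Q|Q|
 \leq 2\sum_{Q}\avg{f}_Q\avg{V}_Q|E_Q|.
\]
The sparse sets are used here to turn $|Q|$ into $|E_Q|$. We want to bound the last sum by a constant times $\int \Mloc_{6\rho}f\cdot V\lesssim\|\Mloc_{6\rho}f\|_X\|V\|_{X'}$. Using $\avg{V}_Q\leq 2A'\inf_Q V\leq 2A'V(x)$ for $x\in E_Q$, together with $\avg{f}_Q\leq\Mloc_{6\rho}f(x)$ for $x\in E_Q$, gives
\[
 \sum_Q\avg{f}_Q\avg{V}_Q|E_Q|\leq 2A'\int\Mloc_{6\rho}f\,V.
\]
Boundedness of $\Mloc_{6\rho}$ on $X$ and $X''=X$ from Lemma~\ref{lem 20260812-1-noi} then close the argument, with $C_{\mathrm{sp}}$ independent of $\D$ and $\Sscr$.

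The main obstacle is ensuring that the $A_1$-type inequality for $V$ holds on every cube of side up to $6\rho$ and not just $\rho$. I resolve this by using Corollary~\ref{cor 20260812-1-noi}$(i)$ to transfer boundedness on $X'$ from $\Mloc_\rho$ to $\Mloc_{6\rho}$, whose hypothesis (continuous embedding of $X'$ into $L^1_{\mathrm{loc}}$) holds by Lemma~\ref{lem 20260812-1-noi}. The same corollary gives boundedness of $\Mloc_{6\rho}$ on $X$.
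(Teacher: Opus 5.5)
Your proof is correct and follows the paper's route. Your $(ii)\Rightarrow(i)$ is exactly the paper's repetition of Proposition~\ref{prop 20260704-1} with $\D(6\rho)$, Lemma~\ref{lem 20260812-4-noi}, the Fatou properties of $X$ and $X'$, self-duality of the finite sparse operators, and \eqref{eq 20260812-32-noi}, and your $(i)\Rightarrow(ii)$ is the duality argument with the sparse sets $E_Q$ after passing to $\Mloc_{6\rho}$ via Corollary~\ref{cor 20260812-1-noi}, which is what the paper indicates; the Rubio de Francia weight $V$ is harmless but unnecessary, since $\avg{h}_Q\leq\Mloc_{6\rho}h(x)$ for $x\in E_Q$ already gives $\int\A_{\Sscr}f\,h\leq2\int\Mloc_{6\rho}f\,\Mloc_{6\rho}h\leq2\|\Mloc_{6\rho}\|_{X\to X}\|\Mloc_{6\rho}\|_{X'\to X'}\|f\|_X\|h\|_{X'}$.
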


\begin{proof}
The implication (i) $\Rightarrow$ (ii) follows from Corollary \ref{cor 20260812-1-noi} and the sparse sets $E_Q$ by duality if we argue similarly to Lemma \ref{lemm 20260810-1-noi}.

Conversely, assume (ii), namely, \eqref{eq 20260812-34-noi}.
Then the boundedness of $\Mloc$ on $X$ follows by repeating the corresponding part of Proposition \ref{prop 20260704-1} with ${\mathcal D}$ replaced by ${\mathcal D}(6\rho)$, using Lemma \ref{lem 20260812-4-noi}, and the Fatou property of $X$, and \eqref{eq 20260812-32-noi}.
Since \eqref{eq 20260812-34-noi} is symmetric with respect to $X$,
that is, we can replace $X$ with $X'$ assuming \eqref{eq 20260812-34-noi},
we see that $\Mloc_{\rho}$ is bounded also on $X'$.
\end{proof}

\subsection{From a vector-valued bound to sparse averaging}
\label{subsection:From a vector-valued bound to sparse averaging}
For a finite truncation parameter, the proof of the next proposition follows
the argument of Proposition \ref{prop 20260703-1}. It applies the vector-valued
maximal inequality to the functions
$h_Q=\avg{|f|}_Q\one_{E_Q}$, where the pairwise disjoint sets
$E_Q\subset Q$ witness sparseness. By
Corollary~\ref{cor 20260812-1-noi}, the required vector-valued estimate is
available for $\Mloc_{6\rho}$.

\begin{proposition}[A sparse $\ell^q$ estimate]
\label{prop 20260812-7-noi}
Let $\rho\in(0,\infty)$, let $X$ be a ball Banach function space, and let
$1<q<\infty$. Suppose that \eqref{eq 20260812-3-noi} holds with constant
$C$. Then there is $K>0$ such that
\begin{equation}\label{eq 20260812-35-noi}
 \|{{\T_{\Sscr,q,(6\rho)}}}f\|_X\leq K\|f\|_X
\end{equation}
for every finite $1/2$-sparse subcollection $\Sscr$ of $\D(6\rho)$,
where $\D$ is an arbitrary dyadic grid, 
{{and every $f\in X$.}}
The constant is independent of
$\D$ and $\Sscr$.
\end{proposition}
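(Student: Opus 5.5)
The plan is to repeat the argument of Proposition~\ref{prop 20260703-1}, with $\Mloc_{6\rho}$ in place of $\M$. The first step is to upgrade the hypothesis from truncation parameter $\rho$ to $6\rho$. By Proposition~\ref{prop 20260812-3-noi}, the assumed estimate \eqref{eq 20260812-3-noi} with $N=1$ gives $\|\Mloc_\rho f\|_X\le C\|f\|_X$. It also gives $X\subset L^1_{\mathrm{loc}}(\R^n)$, and in fact $X$ is a ball Banach function space by hypothesis. Corollary~\ref{cor 20260812-1-noi}$(ii)$, applied with $a=\rho$ and $b=6\rho$, then yields a constant $C_{6\rho}$, independent of $N$, such that
\[
 \left\|\left(\sum_{j=1}^N(\Mloc_{6\rho} f_j)^q\right)^{1/q}\right\|_X
 \le C_{6\rho}\left\|\left(\sum_{j=1}^N|f_j|^q\right)^{1/q}\right\|_X .
\]
Via Lemma~\ref{lem 20260812-2-noi}, one may take $C_{6\rho}$ of the form $C_{n,6}\,C^m$. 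Taking $N=1$ also gives $\|\Mloc_{6\rho}f\|_X\le C_{6\rho}\|f\|_X$.

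Next, fix $f\ge0$ in $X$, a dyadic grid $\D$, and a finite $1/2$-sparse collection $\Sscr\subset\D(6\rho)$ with disjoint sets $E_Q\subset Q$ satisfying $|E_Q|\ge|Q|/2$. Put $h_Q=\avg{f}_Q\one_{E_Q}$. The averages are finite because $f$ is locally integrable, and $h_Q\in X$ by Lemma~\ref{lem 20260702-2}$(i)$. The two pointwise estimates from Proposition~\ref{prop 20260703-1} carry over, and the only new point is checking that every cube used has side length at most $6\rho$, which holds because $\Sscr\subset\D(6\rho)$.

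First, since the $E_Q$ are disjoint and each $x\in E_Q$ lies in $Q$ with $\ell(Q)\le6\rho$,
\[
 \left(\sum_{Q\in\Sscr}h_Q^q\right)^{1/q}=\sum_{Q\in\Sscr}\avg{f}_Q\one_{E_Q}\le\Mloc_{6\rho}f .
\]
Second, for $x\in Q$ the cube $Q$ itself is admissible for $\Mloc_{6\rho}$, so
\[
 \Mloc_{6\rho}h_Q(x)\ge\avg{h_Q}_Q\ge\tfrac12\avg{f}_Q .
\]
Hence $\frac12\T_{\Sscr,q,(6\rho)}f\le\bigl(\sum_Q(\Mloc_{6\rho}h_Q)^q\bigr)^{1/q}$.

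Combining the pieces, the lattice property, the vector-valued bound for $\Mloc_{6\rho}$ applied to the finite family $\{h_Q\}$, and then the scalar bound give
\[
 \|\T_{\Sscr,q,(6\rho)}f\|_X\le 2C_{6\rho}\|\Mloc_{6\rho}f\|_X\le 2C_{6\rho}^2\|f\|_X .
\]
So $K=2C_{6\rho}^2$ works, and it is independent of $\D$ and $\Sscr$. For general $f$, replace $f$ by $|f|$.

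The only step needing care is the first one, the transfer from $\rho$ to $6\rho$. That step rests on Corollary~\ref{cor 20260812-1-noi}, which requires $X$ to be embedded into $L^1_{\mathrm{loc}}$; this holds by Lemma~\ref{lem 20260702-2}$(ii)$. The rest is routine.
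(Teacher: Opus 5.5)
Your proposal is correct and follows essentially the same route as the paper. You first transfer the vector-valued estimate from $\Mloc_\rho$ to $\Mloc_{6\rho}$ via Corollary~\ref{cor 20260812-1-noi}. You then rerun the argument of Proposition~\ref{prop 20260703-1} with $h_Q=\avg{|f|}_Q\one_{E_Q}$, using $\Sscr\subset\D(6\rho)$ so that every cube is admissible for $\Mloc_{6\rho}$, which gives the same constant $K=2C_{6\rho}^2$.
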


\begin{proof}
We follow the idea of Proposition \ref{prop 20260703-1}.
We use $\Mloc$ instead of $\M$.
Assume
$0<\rho<\infty$. By Corollary~\ref{cor 20260812-1-noi}, the
vector-valued estimate for $\Mloc_{6\rho}$ over finite families holds
with some constant
$C_{6\rho}$. Choose measurable sets $E_Q\subset Q$, $Q\in\Sscr$, such that
$|E_Q|\geq |Q|/2$ for every $Q\in\Sscr$ and the sets $E_Q$ are pairwise
disjoint, and put
\[
 h_Q=\avg{|f|}_Q\one_{E_Q}.
\]
Since the sets $E_Q$ are disjoint and every $Q\in\Sscr$ satisfies
$\ell(Q)\leq6\rho$,
\[
 \left(\sum_{Q\in\Sscr}|h_Q|^q\right)^{1/q}
 \leq\Mloc_{6\rho}f.
\]
Moreover, for $x\in Q$,
\[
 \Mloc_{6\rho}h_Q(x)
 \geq\avg{h_Q}_Q
 =\avg{|f|}_Q\frac{|E_Q|}{|Q|}
 \geq\frac12\avg{|f|}_Q.
\]
Therefore
\begin{align*}
 \frac12\|{{\T_{\Sscr,q,(6\rho)}}}f\|_X
 &\leq
 \left\|
 \left(\sum_{Q\in\Sscr}(\Mloc_{6\rho}h_Q)^q\right)^{1/q}
 \right\|_X\\
 &\leq C_{6\rho}
 \left\|
 \left(\sum_{Q\in\Sscr}|h_Q|^q\right)^{1/q}
 \right\|_X\\
 &\leq C_{6\rho}\|\Mloc_{6\rho}f\|_X\\
 &\leq C_{6\rho}^2\|f\|_X.
\end{align*}
This proves \eqref{eq 20260812-35-noi}.
\end{proof}

As we did in Theorem~\ref{thm 20260701-1},
if we assume that $\Mloc_\rho$ satisfies the vector-valued inequality \eqref{eq 20260812-3-noi},
we have uniform sparse averaging bound of the
operators ${{\A_{\Sscr,(6\rho)}}}$ are uniformly bounded on $X$ over all dyadic grids
and all finite $1/2$-sparse collections
$\Sscr\subset\D(6\rho)$.
\begin{corollary}[Uniform sparse averaging bound]
\label{cor 20260812-2-noi}
Let $1<q<\infty$.
Let $\rho\in(0,\infty)$.
Suppose that \eqref{eq 20260812-3-noi} holds with constant
$C$. 
The
operators ${{\A_{\Sscr,(6\rho)}}}$ are uniformly bounded on $X$ over all dyadic grids
and all finite $1/2$-sparse collections
$\Sscr\subset\D(6\rho)$.
\end{corollary}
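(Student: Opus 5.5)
The plan is to repeat the two-step argument behind Corollary~\ref{cor 20260703-1}, now inside the truncated class $\D(6\rho)$.

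\emph{Step 1: a uniform $\ell^q$ sparse bound.} Proposition~\ref{prop 20260812-7-noi} applies directly. It gives a constant $K$, independent of $\D$ and of $\Sscr$, such that
\[
 \|\T_{\Sscr,q,(6\rho)}f\|_X\leq K\|f\|_X
\]
for every finite $1/2$-sparse $\Sscr\subset\D(6\rho)$ and every $f\in X$. Recall that $\T_{\Sscr,q,(6\rho)}=\T_{\Sscr,q}$; the subscript only records the side-length restriction.

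\emph{Step 2: binomial iteration.} Next I fix an integer $m>q$ and invoke Theorem~\ref{thm 20260703-1}. Its only hypothesis is that $\Sscr$ is a finite subcollection of a dyadic grid; it uses no information about side lengths. For every locally integrable $f$ it gives
\[
 \A_{\Sscr}f\leq\Gamma_{q,m}\,\T_{\Sscr,q}^{\,m}f
 \quad\text{almost everywhere}.
\]
Every $f\in X$ is locally integrable, since $X$ is a ball Banach function space (Lemma~\ref{lem 20260702-2}$(ii)$). Each iterate $\T_{\Sscr,q}^{\,k}f$ is again in $X$, because Step 1 is applied at every stage. It is also bounded with bounded support, since $\Sscr$ is finite, so the $m$-fold composition is well defined.

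\emph{Step 3: combining.} Iterating the bound of Step 1 $m$ times gives $\|\T_{\Sscr,q}^{\,m}f\|_X\leq K^m\|f\|_X$. The lattice property of $X$ then yields
\[
 \|\A_{\Sscr,(6\rho)}f\|_X\leq\Gamma_{q,m}K^m\|f\|_X .
\]
The constant $\Gamma_{q,m}K^m$ depends only on $q$, $m$ and the constant in Step 1, not on $\D$ or $\Sscr$, which is the claimed uniform bound.

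I do not expect a real obstacle here. The one point to check is that Step 1 is legitimately available: Proposition~\ref{prop 20260812-7-noi} needs the vector-valued estimate for $\Mloc_{6\rho}$, which comes from the estimate for $\Mloc_\rho$ via Corollary~\ref{cor 20260812-1-noi}. That corollary requires $X$ to be embedded in $L^1_{\mathrm{loc}}$, and this holds because $X$ is a ball Banach function space.
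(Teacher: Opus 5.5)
Your proposal is correct and matches the paper's proof: it takes $K$ from Proposition~\ref{prop 20260812-7-noi}, applies the binomial iteration of Theorem~\ref{thm 20260703-1} with $m>q$ (which needs no side-length information), and concludes $\|\A_{\Sscr,(6\rho)}f\|_X\leq\Gamma_{q,m}K^m\|f\|_X$ by the lattice property. Your extra remarks on local integrability, on the well-definedness of the iterates, and on the availability of the $\Mloc_{6\rho}$ estimate via Corollary~\ref{cor 20260812-1-noi} are accurate; they only make explicit what the paper leaves implicit.
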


\begin{proof}
Let $m\in\N$ satisfy
$m>q$.
We use Theorem \ref{thm 20260703-1} to have
\begin{equation}\label{eq 20260812-37-noi}
 {{\A_{\Sscr,(6\rho)}}}f
 \leq\Gamma_{q,m}(\T_{\Sscr,q})^{m}f=\Gamma_{q,m}({{\T_{\Sscr,q,(6\rho)}}})^{m}f
 \quad\text{almost everywhere}.
\end{equation}
Let $K$ be the uniform bound in
\eqref{eq 20260812-35-noi}. 
Pointwise estimate \eqref{eq 20260812-37-noi} gives
\[
 \|{{\A_{\Sscr,(6\rho)}}}f\|_X
 \leq\Gamma_{q,m}\|({{\T_{\Sscr,q,(6\rho)}}})^{m}f\|_X
 \leq\Gamma_{q,m}K^m\|f\|_X.
\]
Thus,
the
operators ${{\A_{\Sscr,(6\rho)}}}$ are uniformly bounded on $X$ over all dyadic grids
and all finite $1/2$-sparse collections
$\Sscr\subset\D(6\rho)$.
\end{proof}

\subsection{Proof of Theorem~\ref{thm 20260812-1-noi}}
\label{subsec 0813-1-noi}

We remark that the proof of Theorem~\ref{thm 20260812-2-noi}
is similar to that of Theorem~\ref{thm 20260701-2}.
We concentrate on the proof of Theorem~\ref{thm 20260812-1-noi}.
\begin{proof}[Proof of Theorem~\ref{thm 20260812-1-noi}]
Part~\emph{$(ii)$} is Proposition~\ref{prop 20260812-4-noi},
part~\emph{$(iii)$} is Proposition~\ref{prop 20260812-3-noi}, and
part~\emph{$(iv)$} is Proposition~\ref{prop 20260812-5-noi}. It remains to
prove part~\emph{$(i)$}.

Assume $(b)$. Proposition~\ref{prop 20260812-7-noi} and
Corollary~\ref{cor 20260812-2-noi} give a uniform sparse bound.
Proposition~\ref{prop 20260812-6-noi} then yields $(a)$. Thus
$(b)\Rightarrow(a)$.

Assume $(a)$, and fix $q\in(1,\infty)$. Choose any
$p_0\in(1,\infty)$. For a finite family $\{f_j\}_{j=1}^N$, put
\[
 F=\left(\sum_{j=1}^N(\Mloc_\rho f_j)^q\right)^{1/q},
 \qquad
 G=\left(\sum_{j=1}^N|f_j|^q\right)^{1/q}.
\]
Proposition~\ref{prop 20260812-1-noi} verifies the weighted hypothesis of
Proposition~\ref{prop 20260812-2-noi}, with a bound independent of $N$.
Applying extrapolation at a fixed truncation parameter to the family of all such pairs $(F,G)$
gives \eqref{eq 20260812-3-noi} on $X$. Since $q$ was arbitrary, $(c)$
holds. The implication $(c)\Rightarrow(b)$ is immediate. Hence all
three conditions are equivalent.
\end{proof}

\section*{Data availability}

Not applicable.

\section*{Author contributions}

The three authors contributed equally to the
correctness of this paper.

\section*{Acknowledgements}

The authors are thankful to Professor Emiel Lorist
for his advice on Theorem \ref{thm 20260701-1}.
This work was partly supported by MEXT Promotion of Distinctive Joint
Research Center Program JPMXP0723833165
and Osaka Metropolitan University Strategic Research Promotion Project
(Development of International Research Hubs).
 This work was supported by Grant-in-Aid for Scientific Research (C) 
 (26K14971) (Izuki), 
 Grant-in-Aid for Scientific Research (C) (23K03156, 26K14971) (Noi)
 and
 Grant-in-Aid for Scientific Research (C) (23K03156, 26K14971) (Sawano), the Japan Society for the Promotion of Science.

\end{document}